\documentclass[10pt, a4paper]{article}
\usepackage[T1]{fontenc}
\usepackage[utf8]{inputenc}
\usepackage[a4paper]{geometry}
\usepackage[french,english]{babel}
\usepackage{amsmath}
\usepackage{amsfonts}
\usepackage{amssymb}
\usepackage{amsthm}
\usepackage{mathtools}
\usepackage{esint}
\usepackage{hyperref}

\allowdisplaybreaks

\newtheorem{lemma}{Lemma}[section]
\newtheorem{proposition}{Proposition}[section]
\newtheorem{theorem}{Theorem}[section]
\newtheorem{corollary}{Corollary}[section]
\newtheorem{definition}{Definition}[section]
\newtheorem*{remark}{Remark}

\DeclareMathOperator{\card}{card}

\newcommand{\R}{\mathbf{R}}
\newcommand{\C}{\mathbf{C}}
\newcommand{\N}{\mathbf{N}}
\newcommand{\Z}{\mathbf{Z}}

\newcommand{\Continuous}{\mathcal{C}}

\newcommand{\Smooth}[1]{\mathcal{C}^\infty(#1)}
\newcommand{\SmoothComp}[1]{\mathcal{C}^\infty_0(#1)}
\newcommand{\charaset}{\chi}

\newcommand{\collection}[1]{\mathcal{#1}}
\newcommand{\diff}{\mathop{}\mathrm{d}}

\newcommand{\goesto}{\rightarrow}
\newcommand{\Vol}[2]{\mu\left(B\left(#1,#2\right)\right)}
\renewcommand{\div}{\mathrm{div}}
\newcommand{\llangle}{\left\langle}
\newcommand{\rrangle}{\right\rangle}
\newcommand{\doubling}[2][\eta]{(\mathbf{D})^{#1}_{#2}}
\DeclarePairedDelimiter{\set}{\{}{\}}
\newcommand{\parenfrac}[2]{\left(\frac{#1}{#2}\right)}
\newcommand{\dyadic}{\collection{D}}

\newcommand{\rd}[2][\nu]{(\mathbf{RD})^{#1}_{#2}}
\newcommand{\FK}[2][\eta]{(\mathbf{RFK})^{#1}_{#2}}

\newcommand{\Kcond}[1]{(\mathbf{K})_{#1}}

\numberwithin{equation}{section}

\title{Lower bound of Schrödinger operators on Riemannian manifolds}
\author{M. LANSADE}
\date{}

\begin{document}

\begin{abstract}
	We show that a complete weighted manifold which satisfies to a relative Faber Krahn inequality admits a trace inequality for the measure with density $V$, with the constant depending on a Morrey norm of $V$. From this we obtain estimates on the lower bound of the spectrum of the Schrödinger operators with potential $V$ and positivity conditions for such operators. It also yields a $L^2$ Hardy inequality.
\end{abstract}

\maketitle

\section{Introduction}

In \cite{FeffermanPhongConf82, Fefferman83} Fefferman and Phong established the inequality, for $p > 1$:
\begin{equation}\label{eq:feff_phong_ineq}
	\int_{\R^n} V(x) \psi(x)^2 \diff x \leq C_{n,p} N_p(V) \int_{\R^n} |\nabla \psi(x)|^2 \diff x,
\end{equation}

\noindent for any $\psi$ smooth with compact support, where $V$ is a non negative and locally integrable function, $C_{n,p}$ is a constant depending only on the dimension and $p$, and $N_p$ is the Morrey norm:
\begin{equation}
	N_p(V) =\sup_{\substack{x\in \R^n\\ r > 0}} \left( r^{2p-n} \int_{B(x,r)} |V(y)|^p \diff y\right)^{1/p}.
\end{equation} 

Such an inequality yields a positivity condition for the Schrödinger operator $H = \Delta - V$ (with $\Delta = -\sum_{i = 1}^n \partial_i^2$), namely that if $N_p(V) \leq 1/C_{n,p}$, then $H$ is a positive operator. In fact they also gave the following estimates on the lower bound of the spectrum of $H$, $\lambda_1(H)$:

\begin{multline}\label{eq:lower bound estimate R^n}
	\sup_{\substack{x \in \R^n\\ r > 0}} \left( C_1 r^{-n}\int_{B(x,r)} V \diff y - r^{-2} \right) \leq -\lambda_1(H) \\
	-\lambda_1(H) \leq \sup_{\substack{x \in \R^n\\ r > 0}} \left( C_p  \left(r^{-n}\int_{B(x,r)} V^p \diff y\right)^{1/p} - r^{-2} \right).
\end{multline}

The conditions for inequalities such as \eqref{eq:feff_phong_ineq} (though with a constant that doesn't necessarily depends on the Morrey norm) to hold in $\R^n$ has been studied extensively, see for example in \cite{ChangWilsonWolff85, KermanSawyer86,Maz'yaVerbitsky95}. And in \cite{Maz'yaVerbitsky02}, Maz'ya and Verbitsky establish necessary and sufficient conditions for an analoguous inequality to \eqref{eq:feff_phong_ineq} to hold with complex valued $V$. That being the case, it seems interesting to study to what extent, and under which geometrical hypotheses those results extend on other spaces, such as Riemannian manifolds.

The first aim of this article is to generalize the results of Fefferman and Phong to a weighted Riemannian manifold $M$. A natural way to do that would be to use the Poincaré inequality: for any $\kappa > 1$, there is a constant $C >0$, such that for all $x\in M$, $r > 0$, and for any $f \in \Smooth{B(x,\kappa r)}$: 
\begin{equation*}
	\int_{B(x,r)} | f - f_{B(x,r)}|\diff\mu \leq C r \int_{B(x,\kappa r)} |\nabla f|\diff\mu, 
\end{equation*}

\noindent where $f_B = \frac{1}{\mu(B)}\int_B f \diff\mu$. It turns out that the result still holds under some weaker hypothesis. Our proof will follow the general idea used by Schechter in \cite{Schechter89}, that \eqref{eq:feff_phong_ineq} follows from the inequality (which holds in $\R^n$ following a result of Muckenhoupt and Wheeden \cite{MuckenhouptWheeden74}):

\begin{equation}\label{eq:Riesz_Max_L2}
	\left\|I_1 f\right\|_{L^2} \leq C \|M_1 f\|_{L^2},
\end{equation}

\noindent with 
\begin{equation}
	I_1 f(x) = c_n \int_{\R^n} \frac{f(y)}{|x-y|^{n-1}} \diff\mu(y),\quad M_1 f(x) = \sup_{r> 0} r^{1-n} \int_{B(x,r)}|f(y)|\diff y,
\end{equation} 

\noindent and that \eqref{eq:lower bound estimate R^n} is proved using similar estimates, with $(\Delta + \lambda^2)^{-1/2}$ replacing $I_1$.

The proof of the generalisation of \eqref{eq:lower bound estimate R^n} will naturally yields weak versions of \eqref{eq:feff_phong_ineq}, which holds under weaker hypothesis.

\subsection{Definitions and Notations}

A weighted Riemannian manifold $(M,g,\mu)$, or simply a weighted manifold, is the data of a smooth manifold $M$, $g$ a smooth Riemannian metric on $M$, and a Borel measure $\diff\mu = \sigma^2 \diff v_g$ on $M$, with $\sigma$ a smooth positive function on $M$ and $v_g$ is the Riemannian volume measure associated with the metric $g$. We define the (weighted) Dirichlet Laplace operator as the Friedrichs extension of the operator on $\SmoothComp{M}$ defined by:

\begin{equation*}
	\Delta_\mu f = -\sigma^{-2} \div(\sigma^2 \nabla f),
\end{equation*}
\noindent with associated quadratic form $Q(\psi) = \int_M |\nabla\psi|^2 \diff\mu$. We will usually write the Dirichlet Laplace operator as simply $\Delta$. 

On a metric space $(X,d)$, for $x\in X$, $r > 0$, the ball of center $x$ and radius $r$ is the set $B(x,r) = \set*{y:\; d(x,y) < r}$. If $B = B(x,r)$ is the ball, $\theta \in \R$, then $\theta B$ refers to the set $B(x, \theta r)$.

For $p \geq 1$, we let $\|\cdot\|_p$ be the $L^p$ norm on $(M, \mu)$. We define 
\begin{equation*}
	\|f\|_p = \left( \int_M |f|^p \diff\mu \right)^{1/p}.
\end{equation*} 

For $T$ a bounded operator on $L^p$, we use $\|T\|_{L^p \goesto L^p}$, or $\|T\|_p$ when there is no confusion, to refer to its operator norm:
\begin{equation*}
	\|T\|_p = \sup_{\substack{\psi \in L^p\\ \psi \neq 0}} \frac{\|T\psi\|_p}{\|\psi\|_p}.
\end{equation*}

For an open set $U \subset M$, $\lambda_1(U)$ refers to lower bound of the spectrum of $\Delta_\mu$ on $U$:

\begin{equation}
	\lambda_1(U) = \inf_{\substack{\psi\in\SmoothComp{U}\\ \psi \neq 0}} \frac{\|\nabla \psi\|^2_2}{\|\psi\|_2^2}.
\end{equation}

When $H$ is a symmetric operator defined on smooth functions with compact support, $\lambda_1(H)$ is similarly defined to be:

\begin{equation}
	\lambda_1(H) = \inf_{\substack{\psi\in\SmoothComp{M}\\ \psi \neq 0}} \frac{\llangle H\psi, \psi \rrangle}{\|\psi\|_2^2}.
\end{equation}

On a weighted manifold $(M,g,\mu)$, we define the Morrey norms $N_p$, $p\geq 0$, as follows:

\begin{equation}\label{eq:Morrey}
	\forall f \in L^1_{loc}(M),\;N_p(f) = \sup_{\substack{x \in M \\ r > 0}} \left( r^{2p} \fint_{B(x,r)} |f|^p \diff\mu \right)^{1/p},
\end{equation}

\noindent where $\fint_B f \diff\mu = \frac{1}{\mu(B)} \int_B f \diff\mu$ is the mean of $f$ over $B$. We also define the Morrey norm taken on balls of radius less than $R > 0$:

\begin{equation}\label{eq:def morrey norm}
	N_{p,R}(f) = \sup_{\substack{x \in M \\ 0 < r < R}} \left( r^{2p} \fint_{B(x,r)} |f|^p \diff\mu \right)^{1/p}.
\end{equation}

For our generalization to hold, it is important that $(M,g,\mu)$ must admits a \emph{relative Faber Krahn inequality} (property $\FK{}$) defined as follows:

\begin{definition}
	A weighted Riemannian manifold $(M, g, \mu)$ admits a relative Faber-Krahn inequality if there exist constants $b, \eta > 0$, such that for all $x \in M$, $r > 0$, and for any relatively compact open set $U \subset B(x,r)$, the following inequality holds:
	
	\begin{equation}\label{eq:FK}
		\lambda_1(U) \geq \frac{b}{r^2} \left( \frac{\Vol{x}{r}}{\mu(U)}\right)^\frac{2}{\eta}.
	\end{equation}
	
	We say that $M$ admits a relative Faber-Krahn inequality at scale $R$ (property $\FK{R}$) if \eqref{eq:FK} holds only for $0 \leq r \leq R$.
\end{definition}

In what follows, we refer to the constants $b$, $\eta$ in \eqref{eq:FK} as the \emph{Faber-Krahn constants} of the manifold.

\subsection{Statements of the results}

\begin{theorem}\label{th:Fefferman-Phong generalized}
	Let $(M, g, \mu)$ be a weighted complete Riemannian manifold satisfying $\FK{}$, then for any $p > 1$, there is a constant $C_p$ depending only on the Faber-Krahn constants and on $p$, such that for any $V\in L^1_{loc}(M)$, $V \geq 0$, and any $\psi \in \SmoothComp{M}$, the following inequality holds: 
	\begin{equation}\label{eq:Fefferman_Phong generalized}
		\int_M V\psi^2 \diff\mu \leq C_p N_{p}(V) \int_M |\nabla\psi|^2 \diff \mu.
	\end{equation}
\end{theorem}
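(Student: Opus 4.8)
The plan is to follow Schechter's strategy, which reduces the Fefferman–Phong inequality to an $L^2$ boundedness of the form $\|I_1 f\|_2 \leq C\|M_1 f\|_2$, adapted to the manifold setting where the relevant ``Riesz potential'' is built from the heat semigroup. Under $\FK{}$, the manifold admits Gaussian-type heat kernel bounds and the measure is doubling (this is the classical Grigor'yan–Saloff-Coste equivalence), so the first step is to record these consequences: a doubling estimate $\mu(B(x,2r)) \lesssim \mu(B(x,r))$ and an on-diagonal (hence, via standard arguments, off-diagonal) upper bound $p_t(x,x) \lesssim \mu(B(x,\sqrt t))^{-1}$. From these one builds the ``fractional integral'' $I_1 = \Delta^{-1/2}$ represented by $\int_0^\infty t^{-1/2} e^{-t\Delta}\,\frac{dt}{\sqrt t}$ up to constants, together with its kernel estimate $I_1(x,y) \lesssim \dist(x,y)/\mu(B(x,\dist(x,y)))$, and the fractional maximal function $M_1 f(x) = \sup_{r>0} r\,\fint_{B(x,r)} |f|\,d\mu$.

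The core analytic step is then the good-$\lambda$ / covering inequality relating $I_1$ and $M_1$, i.e. a manifold version of \eqref{eq:Riesz_Max_L2}. I would prove the pointwise bound that for any ball $B$ and $x\in B$, the truncated potential $\int_{\dist(x,y)<r} I_1(x,y)|f(y)|\,d\mu(y) \lesssim r\,M_1 f(x)$, while the far part is controlled by a maximal function of $I_1 f$ itself; combined with the doubling property and a Fefferman–Stein / Calderón–Zygmund argument this yields $\|I_1 f\|_2 \lesssim \|M_1 f\|_2$ with constant depending only on the Faber–Krahn constants. Given this, the Fefferman–Phong inequality follows: write $\psi = I_1 g$ with $g = \Delta^{1/2}\psi$ so that $\|g\|_2^2 = \int |\nabla\psi|^2\,d\mu = Q(\psi)$, then
\begin{equation}
\int_M V\psi^2\,d\mu = \int_M V (I_1 g)^2\,d\mu \leq \| V^{1/2} I_1 |g|\|_2^2,
\end{equation}
and one estimates $\|V^{1/2} I_1 |g|\|_2$ by interpolating the trivial $L^\infty$-type bound against the weight, or more directly by a duality/Schur-test argument in which the Morrey norm $N_p(V)$ enters through Hölder's inequality applied on each ball appearing in the kernel decomposition — here the factor $r^{2p}\fint_B V^p$ is exactly what is needed to absorb the $\dist(x,y)/\mu(B)$ kernel after raising to the power dictated by $p>1$. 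The exponent $p>1$ (rather than $p=1$) is precisely what makes the relevant sum over dyadic annuli converge.

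The main obstacle I anticipate is making the ``$I_1$ versus $M_1$'' inequality genuinely quantitative with constants depending \emph{only} on $b,\eta$ and $p$, since on a general manifold one does not have the dilation structure of $\R^n$; the fix is to lean entirely on doubling plus the heat-kernel upper bound, both of which $\FK{}$ supplies with constants controlled by $b,\eta$, and to run the Calderón–Zygmund decomposition intrinsically on $M$ using a system of dyadic-type cubes (Christ cubes). A secondary technical point is the choice of a convenient integral representation of $\Delta^{-1/2}$ and the justification that $\psi\in\SmoothComp M$ lies in its range with $\Delta^{1/2}\psi\in L^2$; this is standard spectral theory once $\Delta$ is the Friedrichs extension, and it is only needed on the dense class of test functions, so no delicate domain issues arise. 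Everything else — the annulus decomposition of the kernel, Hölder against $V^p$, summing the geometric series — is routine once the maximal-function inequality is in hand.
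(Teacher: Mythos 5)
Your overall strategy is the same as the paper's: reduce the inequality to the operator-norm bound for $\Delta^{-1/2}(V^{1/2}\cdot)$ on $L^2$, compare $\Delta^{-1/2}$ with the fractional maximal function $M_1$ by a dyadic-cube / Calderón--Zygmund argument (the paper invokes the Pérez--Wheeden theorem \ref{th_perez_wheeden} for exactly this), and then bring in the Morrey norm through Hölder's inequality in the form $M_1(V^{1/2}f) \leq N_p(V)^{1/2}\, M_0(|f|^{q'})^{1/q'}$ with $q = 2p$, finishing with the $L^{2/q'}$-boundedness of $M_0$. That skeleton is correct and is what the paper does.

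There is, however, one genuine gap. You assert that the kernel estimate $i_1(x,y) \lesssim d(x,y)/\Vol{x}{d(x,y)}$ follows from doubling together with the Gaussian heat-kernel upper bound supplied by $\FK{}$. It does not. Writing $i_1(x,y) = c\int_0^\infty t^{-1/2} p_t(x,y)\,\diff t$, the large-time part of this integral is controlled by $\int_{d(x,y)^2}^\infty t^{-1/2}\,\Vol{x}{\sqrt{t}}^{-1}\diff t$, which converges only when the volume grows strictly faster than linearly; for slower growth the kernel of $\Delta^{-1/2}$ need not even be finite. The paper's Proposition \ref{pr:riesz_potential} accordingly requires the reverse doubling property $\rd{}$ with order $\nu > s = 1$, and this is \emph{not} a consequence of $\FK{}$ alone (non-compactness yields some $\nu > 0$, but not $\nu > 1$). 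The paper first proves the theorem under this additional technical hypothesis and then removes it by passing to the product manifold $\tilde{M} = \R \times M$, which automatically has reverse doubling order $\nu + 1 > 1$, still satisfies a relative Faber--Krahn inequality, and has the same bottom of spectrum for $\Delta - V$ and comparable Morrey norms. Your proposal needs this stabilization step (or an equivalent device) to close; the rest of it is essentially the argument the paper actually runs.
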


If only $\FK{R}$ holds, then we can prove the following localized inequality:

\begin{theorem}\label{th:Weak_positivity}
	Let $(M,g,\mu)$ be a complete weighted Riemannian manifold, such that, for some $R > 0$, $\FK{R}$ holds. Then, for any $p > 1$, there is a constant $C_p > 0$ depending only on the Faber-Krahn constant and on $p$, such that for any $V\in L^1_{loc}(M), V \geq 0$, and any $\psi \in \SmoothComp{M}$, the following inequality holds:
	\begin{equation}\label{eq:Weak_majoration}
		\int_M V\psi^2 \diff\mu \leq C_p N_{p, R}(V) \left(\int_M |\nabla\psi|^2 \diff\mu + \frac{1}{R^2} \int_M \psi^2 \diff\mu \right).
	\end{equation}
\end{theorem}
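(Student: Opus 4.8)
The plan is to follow the same strategy used to prove Theorem~\ref{th:Fefferman-Phong generalized}, but working at a fixed scale $R$ and keeping track of the error term that the scale cutoff produces. Recall that the proof of the unrestricted inequality reduces matters to an $L^2$ boundedness statement of the form $\|I_1 f\|_2 \lesssim \|M_1 f\|_2$, where $I_1$ is the Riesz-type potential built from the heat kernel (or the Green operator of $\Delta$) and $M_1$ is the associated fractional maximal function; under $\FK{}$ one has two-sided heat kernel bounds and hence the required mapping property. When only $\FK{R}$ holds, the natural substitute is to work with the \emph{shifted} operator $\Delta + R^{-2}$ in place of $\Delta$: by the standard perturbation of Faber–Krahn / Nash arguments, $\FK{R}$ yields Gaussian upper bounds for the semigroup $e^{-t(\Delta+R^{-2})}$ that are uniform in $t$ (the extra decay $e^{-t/R^2}$ compensating the lack of large-scale Faber–Krahn), and consequently pointwise bounds on the kernel of $(\Delta + R^{-2})^{-1/2}$ by the truncated Riesz potential $\int_0^{R}\frac{f(y)}{\mu(B(x,\rho))}\,\frac{d\rho}{\rho}\cdots$ plus an exponentially small tail.

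The key steps, in order, are as follows. First, I would show that $\FK{R}$ implies a uniform Gaussian upper bound $p_t(x,y) \leq \frac{C}{\mu(B(x,\sqrt t))} \exp\!\big(-c\,\dist(x,y)^2/t - t/R^2\big)$ for the heat kernel of $\Delta$; this is the technical heart and follows from a Davies–Gaffney / iteration argument applied to the form $Q(\psi) + R^{-2}\|\psi\|_2^2$, so that in effect one is proving a genuine (non-relative) Faber–Krahn-type bound for the shifted operator. Second, using this, I would write $(\Delta + R^{-2})^{-1/2} = c\int_0^\infty e^{-t(\Delta+R^{-2})} t^{-1/2}\,dt$ and bound its kernel pointwise, splitting the $t$-integral at $t = R^2$: the part $t \leq R^2$ gives a truncated Riesz potential $\widetilde I_1$, and the part $t > R^2$ is controlled by the $e^{-t/R^2}$ factor. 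Third, I would establish the truncated analogue $\|\widetilde I_1 f\|_2 \leq C\|M_{1,R} f\|_2$ with $M_{1,R}$ the fractional maximal function restricted to radii $\leq R$, exactly as in the proof of Theorem~\ref{th:Fefferman-Phong generalized} but with all integrals truncated. Fourth, I would run the Schechter-type argument: for $\psi \in \SmoothComp M$ write $|\psi| \leq (\Delta+R^{-2})^{-1/2}\big[(\Delta+R^{-2})^{1/2}|\psi|\big]$, apply the kernel bound, use $\|\widetilde I_1 g\|_2 \lesssim \|M_{1,R} g\|_2$ together with the elementary pointwise estimate $\int_M V\,|\widetilde I_1 g|^2 \lesssim N_{p,R}(V)\,\|M_{1,R} g\|_2^2$ (this is where the Morrey norm at scale $R$ enters, via a covering/Hölder argument on balls of radius $\leq R$), and finally note $\|(\Delta+R^{-2})^{1/2}|\psi|\|_2^2 = Q(\psi) + R^{-2}\|\psi\|_2^2 = \int_M|\nabla\psi|^2\diff\mu + R^{-2}\int_M \psi^2\diff\mu$, which is exactly the right-hand side of \eqref{eq:Weak_majoration}.

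I expect the main obstacle to be the first step: deriving a \emph{scale-uniform} Gaussian heat kernel bound from the merely local relative Faber–Krahn inequality $\FK{R}$. The subtlety is that $\FK{R}$ controls $\lambda_1(U)$ only for $U$ contained in balls of radius $\leq R$, so one does not a priori have volume doubling or a global Faber–Krahn inequality, and the usual Grigor'yan/Coulhon equivalence between relative Faber–Krahn and two-sided heat kernel bounds must be re-examined with the $R^{-2}$ shift absorbing the missing large-scale information. Concretely, one needs a Nash–type iteration that only ever uses test functions supported in small balls; the $e^{-t/R^2}$ gain should appear because on a ball of radius $R$ the shifted operator has $\lambda_1 \geq c/R^2$ unconditionally. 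Once this uniform bound is in hand, every remaining step is a truncated-scale transcription of the argument already given for Theorem~\ref{th:Fefferman-Phong generalized}, with the truncation at radius (resp. time) $R$ producing exactly the additive $R^{-2}\int_M\psi^2\diff\mu$ correction and no other new terms.
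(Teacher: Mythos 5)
Your overall strategy is the one the paper actually follows: replace $\Delta^{-1/2}$ by the Bessel potential $(\Delta+\lambda^2)^{-1/2}$ with $\lambda\sim R^{-1}$, split its kernel into a near part controlled by a fractional maximal function at scales $\leq R$ (whence $N_{p,R}(V)$ after H\"older) and an exponentially decaying far part, and close with the adjoint trick and $\|(\Delta+\lambda^2)^{1/2}\psi\|_2^2=\|\nabla\psi\|_2^2+\lambda^2\|\psi\|_2^2$. But your step 1 is false as stated: $\FK{R}$ does not imply $p_t(x,y)\leq \frac{C}{\Vol{x}{\sqrt t}}e^{-c\,d(x,y)^2/t-t/R^2}$ for the heat kernel of $\Delta$ itself --- already in $\R^n$, which satisfies $\FK{R}$ for every $R$, this fails on the diagonal for $t\gg R^2$. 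The decay $e^{-t/R^2}$ can only come from the shift, i.e.\ from the factor $e^{-\lambda^2 t}$ in the subordination formula, and part of it is consumed absorbing the $e^{Dd(x,y)/R}$ price of passing from $\sqrt{\Vol{x}{\sqrt t}\Vol{y}{\sqrt t}}$ to $\Vol{x}{\cdot}$; moreover under $\FK{R}$ alone there is no volume control beyond scale $R$, so for $t>\lambda^{-2}$ the denominator must be $\Vol{x}{\lambda^{-1}}$, not $\Vol{x}{\sqrt t}$. This is exactly why the paper's Proposition \ref{pr:estimate on the heat kernel} carries a \emph{growing} factor $e^{(1-\gamma)\lambda^2t}$ and requires $\lambda R>\hat c$ for a large constant $\hat c$, with a final rescaling from $\lambda=\hat c/R$ back to $R$ via the monotonicity of $r\mapsto N_{p,r}(V)$. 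These points are repairable, but the statement you propose to prove is not the one that is true.

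The more serious gap is that the pointwise bound of the near part of the Bessel kernel by $\frac{d(x,y)}{\Vol{x}{d(x,y)}}$ --- which is what makes the maximal function $M_{1,R}$, with its weight $r^{1}$, appear, and hence produces $N_{p,R}(V)^{1/2}$ --- requires a reverse doubling property of order $\nu>1$. Indeed the piece $\int_{d^2}^{\lambda^{-2}}t^{-1/2}\,\frac{\diffns t}{\Vol{x}{\sqrt t}}$ is only $O\!\left(\frac{d}{\Vol{x}{d}}\right)$ when $\Vol{x}{\sqrt t}\gtrsim (\sqrt t/d)^\nu\Vol{x}{d}$ with $\nu>1$; in $\R^1$, where $\nu=1$, the Bessel kernel behaves like $\log\frac{1}{\lambda d}$, not like $\frac{d}{\Vol{x}{d}}$. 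The same obstruction reappears in the dimension condition \eqref{condition_phi} needed for the P\'erez--Wheeden-type estimate $\|\widetilde I_1f\|_2\lesssim\|M_{1,R}f\|_2$. Reverse doubling of order $\nu>1$ does not follow from $\FK{R}$, and your plan supplies no substitute. The paper first proves the result under the extra hypothesis $\rd{R}$ with $\nu>1$ (Theorem \ref{th:c_lambda_estimates}) and then removes it by passing to the product $\tilde M=\R\times M$, which gains one order of reverse doubling, inherits $\FK{\theta R}$, and has the same $\lambda_1(\tilde\Delta-\tilde V)$ and comparable Morrey norms. Without this (or an equivalent device) your steps 2 and 3 do not go through.
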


From this inequality we can generalize the Fefferman Phong estimate on the lower bound of the spectrum of the operator $H = \Delta - V$. Indeed if $\FK{}$ holds, then for any $R > 0$, $\FK{R}$ is satisfied. Thus \eqref{eq:Weak_majoration} is true for any $R$. Then the following theorem follows easily:

\begin{theorem}\label{th:lower_bound estimates}
	Let $(M,g,\mu)$ be a complete weighted Riemannian manifold satisfying $\FK{}$. Then for any $p > 1$ there exist constants $C_1, C_p > 0$ depending only on the Faber-Krahn constants (and $C_p$ depending also on $p$), such that, for any $V \in L^1_{loc}(M)$, $V \geq 0$, and for the operator $H = \Delta_\mu - V$ the following inequalities hold:	
	\begin{equation}
		\sup_{\substack{x\in M\\ \delta > 0}} \left( C_1 \fint_{B(x,\delta)} V \diff \mu - \delta^{-2} \right) \leq -\lambda_1(H) \leq \sup_{\substack{x\in M\\\delta > 0}} \left(C_p \left( \fint_{B(x,\delta)} V^p \diff\mu \right)^{1/p} - \delta^2 \right).
	\end{equation}	
\end{theorem}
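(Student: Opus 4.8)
The plan is to obtain the two bounds separately, the upper bound from Theorem~\ref{th:Weak_positivity} applied with the right choice of $R$, and the lower bound by testing the quadratic form of $H = \Delta_\mu - V$ on a carefully chosen bump function supported in a ball.

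For the \emph{upper bound} on $-\lambda_1(\Delta_\mu - V)$, fix $\delta > 0$ and set $R = \delta$ in \eqref{eq:Weak_majoration}; since $\FK{}$ implies $\FK{R}$ for every $R$, the inequality
\[
	\int_M V\psi^2\diff\mu \leq C_p N_{p,\delta}(V)\left(\int_M|\nabla\psi|^2\diff\mu + \frac{1}{\delta^2}\int_M\psi^2\diff\mu\right)
\]
holds for all $\psi\in\SmoothComp{M}$. Rearranging, $\llangle H\psi,\psi\rrangle \geq (1 - C_p N_{p,\delta}(V))\|\nabla\psi\|_2^2 - \delta^{-2}C_p N_{p,\delta}(V)\|\psi\|_2^2$. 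When $C_p N_{p,\delta}(V) \leq 1$ this already gives $-\lambda_1(H) \leq \delta^{-2}C_p N_{p,\delta}(V) \leq \delta^{-2} C_p \sup_{x}(\delta^{2p}\fint_{B(x,\delta)}V^p\diff\mu)^{1/p} = C_p\sup_x(\fint_{B(x,\delta)}V^p\diff\mu)^{1/p} - (\text{correction})$; one must check that dropping the gradient term and reorganising the $\delta^{-2}$ factors produces exactly the claimed form $\sup_{x,\delta}(C_p(\fint_{B(x,\delta)}V^p\diff\mu)^{1/p} - \delta^2)$ after taking a supremum over $\delta$, possibly with a rescaling $\delta \mapsto c\delta$ to absorb constants. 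The case $C_p N_{p,\delta}(V) > 1$ for all $\delta$ must be handled too, but there the right-hand supremum is $+\infty$ and the bound is vacuous, so there is nothing to prove.

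For the \emph{lower bound}, one produces a single test function. Fix $x\in M$ and $\delta>0$, and let $\psi$ be a Lipschitz (then smoothed) cutoff equal to $1$ on $B(x,\delta)$, supported in $B(x,2\delta)$, with $|\nabla\psi|\lesssim 1/\delta$; completeness and non-compactness guarantee such $\psi\in\SmoothComp{M}$ exists. Then
\[
	\llangle H\psi,\psi\rrangle = \int_M|\nabla\psi|^2\diff\mu - \int_M V\psi^2\diff\mu \leq \frac{c}{\delta^2}\mu(B(x,2\delta)) - \int_{B(x,\delta)}V\diff\mu,
\]
so $-\lambda_1(H) \geq \|\psi\|_2^{-2}\bigl(\int_{B(x,\delta)}V\diff\mu - c\delta^{-2}\mu(B(x,2\delta))\bigr)$. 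Since $\|\psi\|_2^2 \le \mu(B(x,2\delta))$ and, crucially, $\FK{}$ yields a volume doubling property, $\mu(B(x,2\delta)) \le C_D\,\mu(B(x,\delta))$, one concludes $-\lambda_1(H) \geq \mu(B(x,\delta))^{-1}\int_{B(x,\delta)}V\diff\mu\cdot C_D^{-1} - c C_D \delta^{-2} = C_1\fint_{B(x,\delta)}V\diff\mu - \delta^{-2}$ after renaming constants and (if needed) rescaling $\delta$. Taking the supremum over $x$ and $\delta$ gives the left-hand inequality.

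The main obstacle is the \emph{volume doubling} step used in the lower bound: one must invoke (or recall) that $\FK{}$ implies that $(M,g,\mu)$ is doubling, i.e. $\mu(B(x,2r))\le C_D\mu(B(x,r))$ with $C_D$ depending only on the Faber–Krahn constants — this is what lets the averages over $B(x,\delta)$ and $B(x,2\delta)$ be compared and what keeps $C_1$ dependent only on the stated data. A secondary, purely bookkeeping, difficulty is matching the algebraic form of the bounds exactly: the $\delta^{-2}$ versus $\delta^2$ and the placement of constants inside versus outside the supremum require a harmless rescaling $\delta \rightsquigarrow \lambda\delta$ and renaming of $C_1, C_p$, which should be carried out once and remarked upon rather than belabored.
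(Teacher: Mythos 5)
Your overall strategy coincides with the paper's: the upper bound on $-\lambda_1$ comes from Theorem~\ref{th:Weak_positivity} applied at a well-chosen scale (legitimate since $\FK{}$ implies $\FK{R}$ for every $R$), and the lower bound from testing the Rayleigh quotient on a cutoff adapted to a ball, using the volume doubling that $\FK{}$ provides via Theorem~\ref{th:heat_kernel_estimate}. Your lower-bound argument is essentially complete and matches the paper's (which uses the Lipschitz profile $f_r(d(o,\cdot))$ rather than a smoothed plateau function); splitting the Rayleigh quotient and bounding $\|\psi\|^2$ from below for the gradient term and from above for the potential term is valid because each term has a fixed sign.

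The genuine gap is in the upper bound, exactly at the step you defer with ``one must check''. Two problems. First, the inequality $N_{p,\delta}(V)\le\sup_x\bigl(\delta^{2p}\fint_{B(x,\delta)}V^p\diff\mu\bigr)^{1/p}$ that you use in passing is false: $N_{p,\delta}$ is a supremum over \emph{all} radii $r<\delta$, and a smaller ball can carry a much larger average of $V^p$, so the supremum cannot be replaced by the single radius $r=\delta$. Second, and more importantly, no fixed $\delta$ and no a posteriori rescaling $\delta\rightsquigarrow c\delta$ converts $\delta^{-2}C_pN_{p,\delta}(V)$ into the claimed supremum with its $-\delta^{-2}$ correction; the scale must be chosen as a function of $V$. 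The paper's resolution is to set
\begin{equation*}
  L=\sup_{x,\delta}\left(2C_p\Bigl(\fint_{B(x,\delta)}V^p\diff\mu\Bigr)^{1/p}-\delta^{-2}\right)
\end{equation*}
and take $\delta=L^{-1/2}$. By the definition of $L$, for every $x$ and every $r<\delta$ one has $2C_p\,r^2\bigl(\fint_{B(x,r)}V^p\diff\mu\bigr)^{1/p}\le r^2L+1\le 2$, hence $N_{p,\delta}(V)\le 1/C_p$ for this particular $\delta$; Theorem~\ref{th:Weak_positivity} at scale $\delta$ then gives $\llangle V\psi,\psi\rrangle\le\|\nabla\psi\|^2+\delta^{-2}\|\psi\|^2=\|\nabla\psi\|^2+L\|\psi\|^2$, i.e.\ $-\lambda_1(\Delta_\mu-V)\le L$, which is the desired bound. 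This coupling of $\delta$ to the value of the supremum is the missing computation in your write-up. (Incidentally, the $-\delta^{2}$ in the statement's right-hand side is evidently a typographical error for $-\delta^{-2}$, consistent with \eqref{eq:lower bound estimate R^n}; you correctly worked with $\delta^{-2}$.)
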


In addition, if $\lambda_1(M) > 0$, then we can strengthen \eqref{eq:Weak_majoration}, and obtain the following result, giving a condition for $\Delta - V$ to be positive:

\begin{theorem}\label{th:Positive lambda_1}
	Let $(M,g,\mu)$ be a complete weighted Riemannian manifold, such that $\FK{R}$ holds for $R > 0$. If in addition, $\lambda_1(M) > 0$, then for any $p > 1$, there is a constant $C_p > 0$ depending only on the Faber-Krahn constants such that, for $V\in L^1_{loc}(M), V \geq 0$, and any $\psi \in \SmoothComp{M}$, the following inequality holds:
	\begin{equation}\label{eq:Positive lambda_1}
		\int_M V\psi^2 \diff\mu \leq C_p N_{p, R}(V)\frac{1 + \lambda_1(M)R^2}{\lambda_1(M)R^2} \left(\int_M |\nabla\psi|^2 \diff\mu +	 \frac{\lambda_1(M)}{2}\int_M \psi^2 \diff\mu \right).
	\end{equation}
\end{theorem}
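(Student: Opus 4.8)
The plan is to bootstrap Theorem~\ref{th:Weak_positivity} using the spectral gap hypothesis; no new machinery is needed. Since $\FK{R}$ holds, Theorem~\ref{th:Weak_positivity} already provides, for every $p>1$, a constant $C_p>0$ depending only on the Faber--Krahn constants and on $p$, such that for all $V\in L^1_{loc}(M)$, $V\geq 0$, and all $\psi\in\SmoothComp{M}$,
\[
	\int_M V\psi^2 \diff\mu \leq C_p\, N_{p,R}(V)\left(\int_M |\nabla\psi|^2 \diff\mu + \frac{1}{R^2}\int_M \psi^2 \diff\mu\right).
\]
So it suffices to dominate the bracket on the right by $\frac{1+\lambda_1(M)R^2}{\lambda_1(M)R^2}$ times the bracket appearing in \eqref{eq:Positive lambda_1}.

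The input that makes this possible is that $\lambda_1(M)>0$: by the variational characterisation of the bottom of the spectrum of $\Delta_\mu$ (for which $\SmoothComp{M}$ is a form core), we have $\int_M |\nabla\psi|^2\diff\mu \geq \lambda_1(M)\int_M \psi^2\diff\mu$ for every $\psi\in\SmoothComp{M}$. Writing $\lambda=\lambda_1(M)$, $A=\int_M|\nabla\psi|^2\diff\mu$ and $B=\int_M\psi^2\diff\mu$, so that $B\leq A/\lambda$, I would split $\frac{1}{R^2}B=\frac{1}{2R^2}B+\frac{1}{2R^2}B$ and bound only one half using $B\leq A/\lambda$, getting
\[
	A + \frac{1}{R^2}B \;\leq\; \left(1+\frac{1}{2\lambda R^2}\right)A + \frac{1}{2R^2}B.
\]
On the other hand $\frac{1+\lambda R^2}{\lambda R^2}\bigl(A+\tfrac{\lambda}{2}B\bigr) = \bigl(1+\tfrac{1}{\lambda R^2}\bigr)A + \bigl(\tfrac{1}{2R^2}+\tfrac{\lambda}{2}\bigr)B$, whose coefficient of $A$ dominates $1+\tfrac{1}{2\lambda R^2}$ and whose coefficient of $B$ dominates $\tfrac{1}{2R^2}$. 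Hence $A+\frac{1}{R^2}B\leq \frac{1+\lambda R^2}{\lambda R^2}\bigl(A+\tfrac{\lambda}{2}B\bigr)$, and feeding this into the estimate from Theorem~\ref{th:Weak_positivity} yields \eqref{eq:Positive lambda_1}.

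There is essentially no obstacle here; the only point requiring a little care is the elementary comparison of coefficients in the last step, which one can alternatively organise as a case distinction between $\lambda R^2\geq 1$ (where, with $c=\tfrac{1+\lambda R^2}{\lambda R^2}$, one has $\tfrac{1}{R^2}-\tfrac{c\lambda}{2}=\tfrac{1-\lambda R^2}{2R^2}\leq 0$, so the required inequality $(c-1)A\geq(\tfrac{1}{R^2}-\tfrac{c\lambda}{2})B$ is trivial) and $\lambda R^2<1$ (where one again invokes $A\geq\lambda B$ and $\tfrac{\lambda(1-\lambda R^2)}{2}\le\lambda$). The completeness and non-compactness assumptions are not used directly in this particular argument, beyond ensuring that the spectral framework of the surrounding theorems and the variational description of $\lambda_1(M)$ are available.
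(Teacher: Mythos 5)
Your proof is correct and follows essentially the same route as the paper: both bootstrap Theorem \ref{th:Weak_positivity} using the variational inequality $\lambda_1(M)\|\psi\|_2^2 \leq \|\nabla\psi\|_2^2$ and then do elementary coefficient bookkeeping. The only difference is that the paper's averaging of the two inequalities actually lands on the stronger bound with $-\tfrac{\lambda_1(M)}{2}\|\psi\|_2^2$ in place of $+\tfrac{\lambda_1(M)}{2}\|\psi\|_2^2$ (see its final display), which in particular implies the stated form \eqref{eq:Positive lambda_1} that you prove directly.
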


\subsection{$L^2$ Hardy inequality}

Notice that the inequality \eqref{eq:Fefferman_Phong generalized} is, for potentials $V$ with $N_p(V) < +\infty$, nothing more than the generalized $L^2$ Hardy inequality:

\begin{equation}\label{eq:Hardy_rho}
	\forall \psi \in \SmoothComp{M},\, \int_M \frac{\psi^2}{\rho^2} \diff\mu \leq C \int_M |\nabla \psi|^2 \diff\mu,
\end{equation}

with $\rho = V^{-1/2}$. Thus, on manifolds for which theorem \ref{th:Fefferman-Phong generalized} holds, the "classical" Hardy inequality, where $\rho$ is the distance to a point, is true whenever $N_p(d(o,\cdot)^{-2})$ is finite. For this to hold, we must make an additional assumption on the measure $\mu$.

\begin{definition}
	A metric measure space $(X,d,\mu)$ satisfies the reverse doubling property of order $\nu$, property $\rd{}$, (or, $\mu$ is $\nu$-reverse doubling) if, there is some constant $a > 0$ such that for all $x\in M$, $0 < r \leq r'$, the following inequality holds:	
	\begin{equation}
		a \parenfrac{r'}{r}^\nu \leq \frac{\Vol{x}{r'}}{\Vol{x}{r}}.
	\end{equation}
\end{definition}

\begin{theorem}\label{th:Hardy}
Let $(M,g,\mu)$ be a weighted Riemannian manifold. Assume that $M$ satisfies $\FK{}$, and that $\mu$ satisfies $\rd{}$ with $\nu > 2$. Then there is some constant $C > 0$ depending only on the Faber-Krahn and reverse doubling constants, such that, for any $o \in M$, then for any $\psi \in \SmoothComp{M}$ the following inequality holds:
	\begin{equation}\label{eq:Hardy}
		\int_M \frac{\psi(x)^2}{\rho(x)^2} \diff\mu(x) \leq C \int_M |\nabla \psi|^2 \diff\mu,
	\end{equation}
	
	\noindent with $\rho(x) = d(o,x)$.
\end{theorem}

We can compare this to the results of V. Minerbe \cite{Minerbe09} or G. Grillo \cite{Grillo03}, who proved $L^p$ Hardy inequalities assuming a Poincaré inequalities and a doubling measure. While we only get a $L^2$ inequality, it holds true under the weaker hypothesis of a relative Faber-Krahn inequality.

A recent work by Cao, Grigor'yan and Liu \cite{Cao2020HardysIA} proved Hardy inequalities as a consequence of volume doubling, reverse doubling, and certain estimates on either the Green function or the heat kernel. Their results are far more general than what we prove on Hardy inequality here.

\subsection{Examples}

We give various cases of manifolds which will satisfy a relative Faber-Krahn inequality (or a relative Faber-Krahn inequality at scale $R$). Then, theorem \ref{th:Fefferman-Phong generalized} (or theorem \ref{th:Weak_positivity}) holds.

\subsubsection{Complete manifolds with Ricci curvature bounded from below}

From  Li and Yau\cite{LiYau86}, the heat kernel of a complete manifold $(M,g,\mu)$ of dimension $n$, with $\mu$ here being the Riemannian volume measure, with Ricci curvature bounded from below by $-K$, for a constant $K \geq 0$, admits the following diagonal estimate:

\begin{equation*}
	p_t(x,x) \leq \frac{C_0}{\Vol{x}{\sqrt{t}}} e^{C_1 K t}.
\end{equation*}

Also, as a consequence of the Bishop-Gromov volume comparison theorem, we get that (see \cite{CheegerGromovTaylor82,ChavelBook06, SaloffCosteBook02} for example), for any $0 < r \leq r'$:

\begin{equation*}
	\frac{\Vol{x}{r'}}{\Vol{x}{r}} \leq \parenfrac{r'}{r}^n \exp\left(\sqrt{(n-1)K}r' \right).
\end{equation*}

Those two conditions implies, (see for example \cite{SaloffCosteBook02, HebischSaloff-Coste01}, or proposition \ref{pr:heat + doubling implies FKR} later), that there is some $R > 0$ such that $M$ satisfies $\FK[n]{R}$. If the Ricci curvature is non-negative, then we also have $\FK[n]{}$.

\subsubsection{Manifolds satisfying Faber Krahn inequalities outside a compact set}

We consider a complete weighted manifold $M$, and remove from it a compact set with smooth boundary $K$. We let $E_1, \ldots, E_k$ be the connected components of $M\setminus K$, and suppose that each $E_i$ is the exterior of a compact set with smooth boundary in a complete manifold $M_i$.

A simple example of such manifold is the connected sum of two (or more) copies of $\R^n$. It admits $\FK[n]{}$, but it is known that such manifold doesn't satisfy a Poincaré inequality (see for example \cite{BenjaminiItaiChavel96}).

Using \cite{GrigoryanSaloffCoste16}, we get that if each $M_i$ satisfies $\FK{}$, then there is some $R > 0$ such that $M$ satisfies $\FK{R}$.

\subsubsection*{Acknowledgements}

I thank G. Carron for his many advices and remark that helped shape this article into its present form, P. Castillon and L. Guillopé for their advices comments on the results and the manuscript. I also thank the Centre Henri Lebesgue \textbf{ANR-11-LABX-0020-01} for creating an attractive mathematical environment. I was partially supported by the ANR grant: \textbf{ANR-18-CE40-0012}: RAGE.

\section{Some techniques of harmonic analysis}\label{sec:Harmonic analysis}

\begin{remark}
The letter $c$, $C$ will usually be used for generic constants, which value might change from line to line. When the dependance on some parameter is judged important and non obvious, it will be made clear when it appears, before being folded into the generic constants on subsequent lines.
\end{remark}

\subsection{Dydadic cubes}

In $\R^n$, the natural decomposition of the space into cubes of length $2^k$, $k \in \Z$ is a very powerful tool. It turns out that families of open sets satisfying similar properties to those of the dyadic cubes in the euclidean space can be constructed in a more general setting.

We will use the construction of such "dyadic cubes" given by E. Sawyer and R. L. Wheeden in \cite{SawyerWheeden92} (though other such constructions, such as the one given in \cite{Christ90}, could also be used without major changes). Though it remains true in a more general setting, for our purposes it can be stated as:

\begin{theorem}\label{cubes}
	Let $(X, d)$ be a separable metric space, then there is a constant $\rho > 1$ ($\rho = 8$ works), such that for any (large negative) integer $m$, there are points $\set*{x_\alpha^k}$ and a family $\collection{D}_m = \set*{\mathcal{E}_\alpha^k}$ of Borel sets for $k = m,\, m+1,\,\ldots$, $\alpha = 1, 2, \ldots$, which satisfy the following properties:
	\begin{itemize}
		\item $B(x_\alpha^k, \rho^k) \subset \mathcal{E}_\alpha^k \subset B(x_\alpha^k, \rho^{k+1})$.
		\item For each $k = m, m+1, \ldots$, the family $\set*{\mathcal{E}_\alpha^k}_\alpha$ is pairwise disjoint in $\alpha$ and $X = \bigcup_\alpha \mathcal{E}_\alpha^k$.
		\item If $m \leq k < l$, then either $\mathcal{E}_\alpha^k \cap \mathcal{E}_\beta^l = \emptyset$ or $\mathcal{E}_\alpha^k \subset \mathcal{E}_\beta^l$.
	\end{itemize}
\end{theorem}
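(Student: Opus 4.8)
The plan is to carry out a Christ-type construction of ``dyadic cubes'' from nested grids of well-separated points: build a grid at every scale, and then define the coarse cubes level by level, starting from the finest admissible level $m$ and working upward, so that each cube at level $k+1$ is a union of cubes at level $k$. Fix $\rho = 8$. For each integer $k \geq m$ I would first choose a maximal (for inclusion) $3\rho^k$-separated subset $\{x_\alpha^k\}_\alpha$ of $X$, indexed by $\alpha \in \{1,2,\ldots\}$ with its natural order; such a set is countable because $X$ is separable. Maximality forces $3\rho^k$-density (every point of $X$ lies within $3\rho^k$ of some $x_\alpha^k$), and the $3\rho^k$-separation makes the balls $B(x_\alpha^k,\rho^k)$ pairwise disjoint.

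At the finest level I would take $\mathcal{E}_\alpha^m$ to be the Voronoi-type cell of $x_\alpha^m$: assign each $y \in X$ to a centre realising $\min_\beta d(y,x_\beta^m)$, ties broken by smallest index, and collect the points so assigned (a Borel set). Density gives $\mathcal{E}_\alpha^m \subseteq B(x_\alpha^m, 3\rho^m) \subseteq B(x_\alpha^m,\rho^{m+1})$, while $B(x_\alpha^m,\rho^m) \subseteq \mathcal{E}_\alpha^m$ because two centres within $\rho^m$ of a common point would be within $2\rho^m < 3\rho^m$ of each other. So $\{\mathcal{E}_\alpha^m\}_\alpha$ is a Borel partition of $X$ with the required inclusions.

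For the inductive step, suppose the Borel partition $\{\mathcal{E}_\gamma^k\}_\gamma$ is built with $B(x_\gamma^k,\rho^k) \subseteq \mathcal{E}_\gamma^k \subseteq B(x_\gamma^k, R_k\rho^k)$ for some $R_k < 4$. I would assign each cell $\mathcal{E}_\gamma^k$ to a level-$(k+1)$ centre by the rule: if $\mathcal{E}_\gamma^k$ meets one of the balls $B(x_\beta^{k+1},\rho^{k+1})$, assign it to that $\beta$ — and at most one such $\beta$ occurs, since two of these balls met by a set of diameter $\diam \mathcal{E}_\gamma^k \leq 2R_k\rho^k$ would place their centres within $2\rho^{k+1} + \diam \mathcal{E}_\gamma^k < 3\rho^{k+1}$ of each other, against the separation; otherwise assign $\mathcal{E}_\gamma^k$ to the centre of $\{x_\beta^{k+1}\}_\beta$ nearest to $x_\gamma^k$, ties by index. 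Put $\mathcal{E}_\beta^{k+1} = \bigcup\{\mathcal{E}_\gamma^k : \mathcal{E}_\gamma^k \text{ assigned to } \beta\}$. This is again a Borel partition; $B(x_\beta^{k+1},\rho^{k+1}) \subseteq \mathcal{E}_\beta^{k+1}$ because any point of that ball lies in some $\mathcal{E}_\gamma^k$, which then meets $B(x_\beta^{k+1},\rho^{k+1})$ and is therefore assigned to $\beta$; and bounding $d(q,x_\beta^{k+1})$ for $q \in \mathcal{E}_\gamma^k$ in each of the two cases (using $3\rho^{k+1}$-density in the fallback case) gives $R_{k+1} \leq \max\{1 + R_k/4,\, 3 + R_k/8\}$, which with $R_m = 3$ keeps $R_k < 24/7 < 4$ for all $k \geq m$. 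Hence $\mathcal{E}_\gamma^k \subseteq B(x_\gamma^k,\rho^{k+1})$, the hypothesis $R_k < 4$ is maintained, and the assignment rule stays well defined. Finally, for $m \leq k < l$, iterating ``$\mathcal{E}_\gamma^k$ sits inside a unique cell one level up'' shows $\mathcal{E}_\gamma^k$ lies in a unique $\mathcal{E}_\beta^l$ and is disjoint from all the others, which is the nesting property.

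The one genuinely delicate point — which I expect to be the main obstacle — is the persistence of the inner inclusion $B(x_\alpha^k,\rho^k) \subseteq \mathcal{E}_\alpha^k$ under coarsening: the naive rule ``each fine cell joins the cube of its centre's nearest coarse ancestor'' fails for cells straddling the bisector of two coarse centres, and one is forced to give priority to fine cells meeting the coarse inner ball. That priority is consistent only when the coarse inner balls are disjoint and the fine cells are small relative to the coarse separation, which is exactly why the outer-radius bound $R_k$ must be carried through the induction and why the separation constant ($3$) and the scale ratio $\rho$ ($8$) must be large enough to close all the inequalities simultaneously. The remaining points — Borel measurability of the cells, countability of the index sets, and the partition property at each level — are routine.
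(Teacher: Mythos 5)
The paper does not actually prove this theorem: it is quoted verbatim from E.~Sawyer and R.~L.~Wheeden \cite{SawyerWheeden92} (the text also points to Christ's construction) and used as a black box. Your proposal therefore goes beyond the paper by supplying a self-contained Christ-type construction, and it is essentially correct: the choice of maximal $3\rho^k$-separated nets, the Voronoi partition at the finest level $m$, the bottom-up assignment rule giving priority to fine cells that meet a coarse inner ball $B(x_\beta^{k+1},\rho^{k+1})$, and the recursion $R_{k+1}\leq\max\{1+R_k/4,\;3+R_k/8\}$ with fixed point $24/7<4$ all check out; in particular $R_k<4$ is exactly what makes $\mathrm{diam}\,\mathcal{E}_\gamma^k<\rho^{k+1}$, hence the uniqueness of the coarse inner ball met by a fine cell, and $4<\rho=8$ gives the outer inclusion. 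The one point you should tighten is the well-definedness of the ``nearest centre'' assignments: in a general separable metric space the infimum $\inf_\beta d(y,x_\beta^m)$ need not be attained (infinitely many $3\rho^m$-separated centres can accumulate in distance to $y$ without any realising the infimum), so ``a centre realising $\min_\beta d(y,x_\beta^m)$'' may not exist; the same issue affects the fallback rule ``nearest coarse centre to $x_\gamma^k$''. This is harmless but needs a patch: for $y$ in some inner ball $B(x_\alpha^m,\rho^m)$ the nearest centre is unique and attained (any other centre is at distance $>2\rho^m$), so assign such $y$ to that $\alpha$; for all other $y$ assign to the least index $\alpha$ with $d(y,x_\alpha^m)<3\rho^m$, which exists by maximality, preserves the outer bound $R_m=3$, and cannot steal points from any inner ball; likewise in the fallback case assign $\mathcal{E}_\gamma^k$ to the least index $\beta$ with $d(x_\gamma^k,x_\beta^{k+1})<3\rho^{k+1}$. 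With that repair the cells are still countable Borel unions and the whole induction closes. Compared with citing \cite{SawyerWheeden92}, your route buys a self-contained and elementary proof at the cost of slightly worse (but irrelevant here) geometric constants.
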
	
	
Given such a family $\collection{D}_m$, the sets $\mathcal{E}_\alpha^k$ will be called \emph{dyadic cubes} of $M$, or simply \emph{cubes}. The ball $B(x_\alpha^k, \rho^{k+1})$  is called the containing ball of the cube $\mathcal{E}_\alpha^k$. For any cube $Q$ the containing ball is denoted by $B(Q)$. $\rho$ will be called the sidelength constant of dyadic cubes.
	
The length of a cube $Q$ is the radius of $\rho^{-1}B(Q)$, written $\ell(Q)$.

\subsection{Properties of doubling measures}

We start by recalling the definitions and some standard properties of doubling measures. Most of the proofs are classical, but are rarely explicitely done for the $R$ doubling case, and we thus give them for completeness' sake, without claiming originality.

\begin{definition}\label{def:def doubling}
	A metric measure space $(X,d,\mu)$ satisfies the doubling property $\doubling{}$ of order $\eta$ if, there is some constant $A > 0$ such that for all $x\in M$, $0 < r \leq r'$, the following inequality holds:	
	\begin{equation}\label{eq:def doubling}
		\frac{\Vol{x}{r'}}{\Vol{x}{r}} \leq A \parenfrac{r'}{r}^\eta.
	\end{equation}
	
	We call $A$ the doubling constant, and $\eta$ the doubling order. We will also say "the doubling constants" to refer to both $A$ and $\eta$ at the same time. The property $\doubling{}$ is equivalent to the fact that for some constant $A > 0$, for any ball $B \subset M$:
	
	\begin{equation}\label{eq:def doubling alt}
		\mu(2B) \leq A \mu(B).
	\end{equation}
\end{definition}

The proof of the equivalence is the same as that of the $R$-doubling case given after definition \ref{def:R doubling}, (with $R = \infty$).

A note on the constants: \eqref{eq:def doubling alt} implies \eqref{eq:def doubling} with $\eta = \log_2 A$ (and $A$ the same in both inequalities), while conversely, \eqref{eq:def doubling} implies that the constant in \eqref{eq:def doubling alt} be $2^\eta A$. By increasing $A$ and $\eta$ if necessary, we can always assume that $A = 2^\eta$.

We repeat, for completeness, the definition of the reverse doubling property:

\begin{definition}\label{def:def reverse doubling}
	A metric measure space $(X,d,\mu)$ satisfies the reverse doubling property $\rd{}$ of order $\nu$ if, there is some constant $a > 0$ such that for all $x\in M$, $0 < r \leq r'$, the following inequality holds:	
	\begin{equation}\label{eq:def reverse doubling}
		a \parenfrac{r'}{r}^\nu \leq \frac{\Vol{x}{r'}}{\Vol{x}{r}}.
	\end{equation}
	
	We call $a$ the reverse doubling constant, and $\nu$ the reverse doubling order. The property $\rd{}$ is equivalent to the fact that for some constant $a \in (0,1)$, for any ball $B \subset M$:
	
	\begin{equation}\label{eq:def reverse doubling alt}
		\mu(B) \leq a\mu(2B).
	\end{equation}
\end{definition}

\begin{proof}[Proof of \eqref{eq:def reverse doubling alt} implies \eqref{eq:def reverse doubling}]
	We can assume that $a \leq 1$. Let $x\in X$, $0 < r \leq r'$. Writing $\lfloor t \rfloor$ for the integer part of $t\in \R$, let $k = \left\lfloor \log_2 \frac{r'}{r}\right\rfloor$. Then: 
	
	\begin{align*}
		\Vol{x}{r} &\leq a^k\Vol{x}{2^kr} \\
		&\leq a^k \Vol{x}{r'} \\
		&\leq a^{-1 + \log_2 \frac{r'}{r}} \Vol{x}{r'}\quad (a \leq 1) \\
		&\leq \frac{1}{a} \parenfrac{r'}{r}^{-\nu} \Vol{x}{r'},
	\end{align*}
	
	with $\nu = -\log_2 a$. Thus:
	
	\begin{equation*}
		a \parenfrac{r'}{r}^\nu \leq \frac{\Vol{x}{r'}}{\Vol{x}{r}}.
	\end{equation*}
\end{proof}

\begin{proposition}\label{pr:doubling different centers}
	Let $(X, d, \mu)$ satisfies $\doubling{}$, then for any $x,y \in M$, $r, r' > 0$ such that $B(y, r) \subset B(x,r')$, we have:
	
	\begin{equation}\label{eq:doubling different centers}
		\frac{\Vol{x}{r'}}{\Vol{y}{r}} \leq A^2 \parenfrac{r'}{r}^\eta.
	\end{equation}
\end{proposition}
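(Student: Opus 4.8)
The statement to prove is Proposition~\ref{pr:doubling different centers}: if $(X,d,\mu)$ satisfies $\doubling{}$, then for $B(y,r)\subset B(x,r')$ we have $\Vol{x}{r'}/\Vol{y}{r}\leq A^2(r'/r)^\eta$. The plan is to compare both balls $B(x,r')$ and $B(y,r)$ to a single intermediate ball centered at $y$, using the hypothesis $B(y,r)\subset B(x,r')$ once to get a radius bound and once to nest the balls.

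First I would observe that since $B(y,r)\subset B(x,r')$, every point of $B(y,r)$ lies within distance $r'$ of $x$; combined with the triangle inequality this forces $r\leq 2r'$ (indeed $d(x,y)<r'$ once $B(y,r)$ is nonempty, so $B(y,r)$ and even $B(x,r')$ are both contained in $B(y, r' + d(x,y)) \subset B(y, 2r')$). More usefully, $B(x,r')\subset B(y, r' + d(x,y)) \subset B(y, 2r')$. So I have the chain of inclusions $B(y,r)\subset B(x,r')\subset B(y,2r')$, all now centered at the \emph{same} point $y$.

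Next I would chain the monotonicity of $\mu$ with the doubling inequality \eqref{eq:def doubling} applied at center $y$ to the radii $r\leq 2r'$:
\begin{equation*}
	\frac{\Vol{x}{r'}}{\Vol{y}{r}} \leq \frac{\mu(B(y,2r'))}{\Vol{y}{r}} \leq A\parenfrac{2r'}{r}^\eta = A\,2^\eta \parenfrac{r'}{r}^\eta.
\end{equation*}
This already gives the conclusion with constant $A\,2^\eta$ in place of $A^2$. To get exactly $A^2$ as stated, I would instead be slightly more careful about which doubling form to use: using \eqref{eq:def doubling alt}, i.e.\ $\mu(2B)\leq A\mu(B)$, I can absorb the factor $2^\eta$. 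Concretely, note $B(x,r') \subset B(y,2r')$ and apply \eqref{eq:def doubling alt} to $B(y,r')$ to get $\mu(B(y,2r')) \le A \Vol{y}{r'}$; then apply \eqref{eq:def doubling} at center $y$ to the pair $r \le r'$ to get $\Vol{y}{r'} \le A (r'/r)^\eta \Vol{y}{r}$. Multiplying these two estimates yields $\Vol{x}{r'} \le \mu(B(y,2r')) \le A^2 (r'/r)^\eta \Vol{y}{r}$, which is the claimed bound. (One should also note $r \le r'$ here: since $B(y,r) \subset B(x,r') \subset B(y,2r')$ and balls are increasing in radius, if $r > 2r'$ this would be impossible unless the ball is the whole space, in which case all the volumes coincide and the inequality is trivial; more simply, $B(y,r)\subseteq B(y,2r')$ gives $r \le 2r'$, and if $r' < r \le 2r'$ one can just replace $r'$ by $r$ throughout, only improving the ratio.)

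The main (mild) obstacle is the bookkeeping of constants: the two displayed forms of the doubling condition carry slightly different constants (as the "note on the constants" after Definition~\ref{def:def doubling} records), so to land exactly on $A^2$ one must pick the multiplicative form \eqref{eq:def doubling alt} for the step that eats the dilation factor $2$ and the exponential form \eqref{eq:def doubling} for the step that compares $r$ to $r'$. There is no real analytic difficulty; the only thing to be careful about is the degenerate case where a ball equals the whole space (so "doubling" gives nothing new), which is handled trivially since then numerator and denominator are equal and the factor $A^2(r'/r)^\eta \ge 1$ makes the inequality automatic.
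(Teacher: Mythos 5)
Your proof is correct and takes essentially the same route as the paper: the paper's argument is precisely your first display, namely the inclusion $B(x,r')\subset B(y,2r')$ followed by \eqref{eq:def doubling} at center $y$, which gives the constant $A\,2^\eta$ and equals $A^2$ under the convention $\eta=\log_2 A$ that the paper adopts (stated explicitly in proposition \ref{pr:R doubling different centers}), so your two-step refinement via \eqref{eq:def doubling alt} is a harmless extra. One small caveat: that refinement needs $r\le r'$, and your fallback for the degenerate case $r'<r\le 2r'$ only delivers the bound $A^2$ rather than the smaller $A^2(r'/r)^\eta$, whereas the one-step estimate $\Vol{x}{r'}\le\Vol{y}{2r'}\le A(2r'/r)^\eta\Vol{y}{r}$ covers that case directly.
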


	This is a classical result. The proof is similar to what we will do to prove proposition \ref{pr:R doubling different centers}.

\begin{definition}\label{def:R doubling}
	A metric measure space $(X,d,\mu)$ satisfies the $R$-doubling property $\doubling{R}$ if there is some constant $A > 0$ such that \eqref{eq:def doubling} holds for all $x \in M$, and $0 < r \leq r' \leq 2R$. This is equivalent to \eqref{eq:def doubling alt} being true for all ball $B$ with radius less than $R$.
	
	$X$ satisfies the $R$-reverse doubling property $\rd{R}$ if \eqref{eq:def reverse doubling alt} holds for all balls of radius less than $R$ (this is equivalent to \eqref{eq:def reverse doubling} being true for all $x \in X$ and $0 < r \leq r' \leq 2R$).
\end{definition}

We will write $A_R$ for the doubling constant when it's important to precise which $R$ the constant is associated with.

Some care is needed to get precisely those maximal radius. That \eqref{eq:def doubling alt} follows from \eqref{eq:def doubling} is immediate.

\begin{proof}[Proof of \eqref{eq:def doubling alt} implies \eqref{eq:def doubling}]
	Suppose that there is some constant $A$ such that for all ball $B$ of radius less than $R$, then $\mu(2B) \leq A\mu(B)$. Let $r \leq r' \leq 2R$, $k = \left\lfloor \log_2 \frac{r'}{r} \right\rfloor$. 
	
	We have:
	\begin{equation*}
		2^{-k-1}r' < r \leq 2^{-k}r',
	\end{equation*}
	\noindent and, using repeatedly the doubling inequality $\Vol{x}{\rho} \leq A \Vol{x}{\rho/2}$, valid for all $\rho \leq 2R$, we have:	
	\begin{align*}
		\Vol{x}{r'} &\leq A^{k+1}\Vol{x}{2^{-k - 1} r'} \\
		&\leq A^{k + 1} \Vol{x}{r} \\
		&\leq A e^{\left(\log A \log \frac{r'}{r}\right)/ \log 2} \Vol{x}{r} \\
		&\leq A \left(\frac{r'}{r}\right)^\eta \Vol{x}{r},
	\end{align*}
	
	with $\eta = \log_2 A$.
\end{proof}

\begin{proposition}\label{pr:R doubling different centers}
	Let $X$ satisfies $\doubling{R}$, then for all $x, y \in X$, $r, r' > 0$ such that $B(y, r) \subset B(x,r')$ and with $r' < R$, then for $\eta = \log_2 A$:
	
	\begin{equation}\label{eq:R doubling different centers}
		\frac{\Vol{x}{r'}}{\Vol{y}{r}} \leq A^2 \left(\frac{r'}{r}\right)^\eta.
	\end{equation}
	
	If in addition $X$ satisfies $\rd{R}$, then we also have for some constant $c > 0$, that for all $0 < r, r' < R$ and $B(y,r) \subset B(x,r')$,
	
	\begin{equation}\label{eq:reverse doubling different centers}
		c \parenfrac{r'}{r}^\nu \leq \frac{\Vol{x}{r'}}{\Vol{y}{r}}.
	\end{equation}
\end{proposition}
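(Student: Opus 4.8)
The plan is to bracket the ball $B(x,r')$ by balls with centre $y$ (for the upper bound) and by balls with centre $x$ together with a ball with centre $y$ (for the lower bound), and then to apply the single-centre properties $\doubling{R}$ and $\rd{R}$. Everything rests on the elementary observation that, since $y\in B(y,r)\subseteq B(x,r')$, we have $d(x,y)<r'$; the triangle inequality then gives the two inclusions $B(x,r')\subseteq B(y,2r')$ and $B(y,r')\subseteq B(x,2r')$. We may also assume $r\le 2r'$, the opposite case forcing $B(y,2r')\subseteq B(y,r)\subseteq B(x,r')\subseteq B(y,2r')$, so that all these balls coincide and both inequalities are trivial (this degenerate case does not occur, e.g., on a complete non-compact manifold, where balls grow strictly with the radius).

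For \eqref{eq:R doubling different centers}: by monotonicity of $\mu$ and the first inclusion, $\Vol{x}{r'}\le\Vol{y}{2r'}$. Since $r\le 2r'\le 2R$ (here the hypothesis $r'<R$ is used), $\doubling{R}$ applied with centre $y$ to the radii $(r,2r')$ gives $\Vol{y}{2r'}\le A\,(2r'/r)^{\eta}\,\Vol{y}{r}$; as $\eta=\log_2 A$ we have $2^{\eta}=A$, so $A\,(2r'/r)^{\eta}=A^{2}\,(r'/r)^{\eta}$, which is \eqref{eq:R doubling different centers}.

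For \eqref{eq:reverse doubling different centers}, assume in addition $\rd{R}$. If $r>r'$ the estimate is immediate: then $(r'/r)^{\nu}<1$, while $\Vol{x}{r'}\ge\Vol{y}{r}$ by inclusion and the constant $c$ produced below satisfies $c\le 1$. So suppose $r\le r'$. First, $\doubling{R}$ with centre $x$ applied to the radii $(r',2r')$ — legitimate since $2r'\le 2R$ — gives, again using $2^{\eta}=A$, that $\Vol{x}{2r'}\le A^{2}\,\Vol{x}{r'}$; combined with $B(y,r')\subseteq B(x,2r')$ this yields $\Vol{x}{r'}\ge A^{-2}\,\Vol{y}{r'}$. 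Next, $\rd{R}$ with centre $y$ applied to the radii $(r,r')$ — legitimate since $r\le r'<R$ — gives $\Vol{y}{r'}\ge a\,(r'/r)^{\nu}\,\Vol{y}{r}$. Chaining the two bounds proves \eqref{eq:reverse doubling different centers} with $c=a/A^{2}$ (and indeed $c\le 1$, since $a\le 1\le A$).

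The argument involves no real difficulty beyond the bookkeeping; the one point requiring care is to check at each invocation of $\doubling{R}$ or $\rd{R}$ that the larger radius stays $\le 2R$ — which is precisely why the statement is phrased with $r'<R$ — and to dispose of the degenerate configuration in which the balls collapse onto one another.
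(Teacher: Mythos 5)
Your argument is correct and follows essentially the same route as the paper: the upper bound comes from $B(x,r')\subset B(y,2r')$ followed by single-centre doubling at $y$ from $r$ up to $2r'$, and the lower bound from comparing $\Vol{y}{r'}$ with $\Vol{x}{r'}$ through a doubled ball and then applying $\rd{R}$ at centre $y$. Your version of the lower bound ($\Vol{y}{r'}\le\Vol{x}{2r'}\le A^2\Vol{x}{r'}$, then reverse doubling from $r$ to $r'$) is marginally more direct than the paper's, which routes through \eqref{eq:doubling different centers} applied to $B(x,r')\subset B(y,2r')$ and yields $c=aA^{-3}$ rather than your $aA^{-2}$; you also treat the case $r>r'$ explicitly, which the paper leaves implicit. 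One caveat: your dismissal of the case $r>2r'$ is not right. There all the balls do have the same measure, so the left-hand ratio equals $1$, but $A^2(r'/r)^\eta$ can be strictly less than $1$ when $r/r'$ is large, so the first inequality is not ``trivial'' in that configuration — it simply fails (take $X=[0,1]$ with Lebesgue measure, $x=y=1/2$, $r'=1$ and $r$ large). This is really a defect of the statement rather than of your proof: the paper's own argument also tacitly assumes $r\le 2r'$ (doubling at centre $y$ from $r$ to $2r'$ needs $r\le 2r'$), and the configuration is vacuous in the non-compact settings where the proposition is applied, but you should not assert it is harmless.
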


\begin{proof}For the first part, we simply use $B(x,r) \subset B(y, 2r)$ then applies \eqref{eq:def doubling}.

For the second part, since $B(x,r') \subset B(y,2r')$, we can use \eqref{eq:doubling different centers} and we get:

\begin{align*}
	\frac{\Vol{x}{r'}}{\Vol{y}{r}} &= \frac{\Vol{y}{r'}}{\Vol{y}{r}}\frac{\Vol{x}{r'}}{\Vol{y}{r'}} \\
	&\geq a\parenfrac{r'}{r}^\nu\frac{\Vol{x}{r'}}{\Vol{y}{2r'}} \\
	&\geq a A^{-2} 2^{-\eta} \parenfrac{r'}{r}^\nu.
\end{align*}
\end{proof}

We now suppose that $(X, d)$ is a \emph{path metric space}, i.e.\ that the distance $d(x,y)$ is realised as the infimum of the length of continuous path with end points $x$ and $y$. We will keep making this assumption in everything that follows (Most results are still true in a more general setting, but this simplify some proofs and is sufficient for our purposes).

\begin{proposition}
	Let $X$ be a metric space satisfying $\doubling{R}$. Assume that the annuli $B(x,r') \setminus B(x,r)$, for any $r, r'$ with $0 \leq r < r' \leq R$ are all non empty. Then there is some $\nu > 0$ such that $X$ satisfies $\rd{R/2}$.
\end{proposition}

\begin{proof}
	Let $x \in X$, $r < R/2$. Take $y \in B(x,7r/4) \setminus B(x,5r/4)$ (which is non empty as $7r/4 \leq R$). Then: 
	\begin{equation*}
		B(y, r/4) \subset B(x,2r) \setminus B(x,r).
	\end{equation*}
	
	Then we have: 
	\begin{align*}
		\Vol{x}{2r} &\leq A^2 8^\eta \Vol{y}{r/4} \\
		\Vol{y}{r/4} &\leq \Vol{x}{2r} - \Vol{x}{r}
	\end{align*} 
	
	Thus with $C = A^2 8^\eta$, we have:
	\begin{equation*}
		(1 + C^{-1})\mu(B(x,r)) \leq \mu(B(x,2r)).
	\end{equation*}
	
	Thus the measure satisfies the $R$-reverse doubling property.
\end{proof}

The $R$-doubling also implies some upper bound on the volume of balls of large radius. The two following propositions, and their proof, are taken from \cite{HebischSaloff-Coste01}.

\begin{proposition}\label{pr:annuli_bound}
	If $(X,d, \mu)$ is a path metric space satisfying $\doubling{R}$, then there is some $C > 0$ that depends only on the doubling constant and order, such that we have, for any $r > 0$, $R' \leq R$:
	
	\begin{equation}
		\Vol{x}{r + R'/4} \leq C \Vol{x}{r}.
	\end{equation}
\end{proposition}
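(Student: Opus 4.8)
The plan is to reduce first to the case $R'=R$ and then to an estimate on an annular shell. Since $R'\le R$ we have $B(x,r+R'/4)\subset B(x,r+R/4)$, so it suffices to find $C$ — depending only on the doubling constants — with $\Vol{x}{r+R/4}\le C\,\Vol{x}{r}$. Setting $\Omega=B(x,r+R/4)\setminus B(x,r)$, so that $\Vol{x}{r+R/4}=\Vol{x}{r}+\mu(\Omega)$, this amounts to proving $\mu(\Omega)\le (C-1)\,\Vol{x}{r}$. The content lies in the range where $r$ is comparable to (or larger than) $R$, so that there is room to move the points of $\Omega$ towards $x$: this is where the standing assumption that $(X,d)$ is a path metric space enters.

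To bound $\mu(\Omega)$ I would fix a maximal $R/8$-separated family $\{y_i\}_{i\in I}$ of points of $\Omega$. By maximality $\Omega\subset\bigcup_{i\in I}B(y_i,R/8)$, while the balls $B(y_i,R/16)$ are pairwise disjoint. Since $y_i\in\Omega$ we have $r\le d(x,y_i)<r+R/4$, and I would push $y_i$ inward: following an almost length-minimizing path from $y_i$ to $x$ and stopping after arclength $5R/16+\varepsilon$, one reaches a point $z_i$ with $d(y_i,z_i)\le 5R/16+\varepsilon<R$ and $d(x,z_i)<d(x,y_i)-5R/16<r-R/16$. Hence $B(z_i,R/16)\subset B(x,r)$, whereas $B(y_i,R/16)\subset B(z_i,R)$, so at most four applications of $\doubling{R}$ (all to balls of radius $\le R$) give $\Vol{y_i}{R/16}\le A^4\,\Vol{z_i}{R/16}$.

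It remains to sum over $i$ with bounded multiplicity. If $B(z_i,R/16)$ meets $B(z_j,R/16)$, then $d(y_i,y_j)\le d(y_i,z_i)+d(z_i,z_j)+d(z_j,y_j)<3R/4+2\varepsilon<R$, so the (pairwise disjoint) balls $B(y_j,R/16)$ with this property all lie in $B(y_i,R)$, and $\doubling{R}$ also gives $\Vol{y_i}{R/16}\le\Vol{y_j}{2R}\le A^5\,\Vol{y_j}{R/16}$ for each of them; comparing $\sum_j\Vol{y_j}{R/16}\le\Vol{y_i}{R}\le A^4\,\Vol{y_i}{R/16}$ with this lower bound shows their number is at most a constant $N$ depending only on the doubling constants. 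Thus $\sum_{i\in I}\mathbf{1}_{B(z_i,R/16)}\le N$, and
\begin{equation*}
\mu(\Omega)\le\sum_{i\in I}\Vol{y_i}{R/8}\le A\sum_{i\in I}\Vol{y_i}{R/16}\le A^5\sum_{i\in I}\Vol{z_i}{R/16}\le A^5N\,\mu\!\left(\bigcup_{i\in I}B(z_i,R/16)\right)\le A^5N\,\Vol{x}{r},
\end{equation*}
the last step using $B(z_i,R/16)\subset B(x,r)$. This yields the proposition with $C=1+A^5N$. I expect the main obstacle to be the bookkeeping of radii and separation constants: the various quantities (here taken as multiples of $R/16$) must be arranged so that every ball inclusion used to invoke $\doubling{R}$ stays within radius $2R$, and so that the inward push simultaneously places $B(z_i,R/16)$ strictly inside $B(x,r)$ and keeps $d(y_i,z_i)<R$ for the multiplicity count — the path metric hypothesis being precisely what makes the push possible, and requiring $r$ to be at least of order $R$ for there to be enough room.
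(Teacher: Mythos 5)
Your argument is sound in the regime where it applies, and it rests on the same underlying ingredients as the paper's proof (a maximal separated net, an inward push along almost length-minimizing paths, and iterated doubling on balls of radius at most $R$), but the bookkeeping is organized differently and more heavily. You net the annulus $\Omega$ by points $y_i$ and then push each $y_i$ inward to $z_i$, which forces you both to compare $\Vol{y_i}{R/16}$ with $\Vol{z_i}{R/16}$ and to prove a bounded-overlap estimate for the pushed balls $B(z_i,R/16)$, since these are no longer disjoint. The paper instead places a maximal $R'/2$-separated net $\{x_i\}$ in the \emph{shrunken} ball $B(x,r-R'/4)$: the balls $B(x_i,R'/4)$ are then automatically pairwise disjoint \emph{and} contained in $B(x,r)$, and the path-metric push is used only to show that the balls $B(x_i,R')$ cover $B(x,r+R'/4)$ (any point of the large ball can be moved a distance $R'/2$ toward $x$ into $B(x,r-R'/4)$, hence lies within $R'$ of some $x_i$ by maximality). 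This gives $\Vol{x}{r+R'/4}\le\sum_i\Vol{x_i}{R'}\le A^2\sum_i\Vol{x_i}{R'/4}\le A^2\Vol{x}{r}$ with no multiplicity constant at all. Your preliminary reduction to $R'=R$ is legitimate but unnecessary; keeping $R'$ costs nothing. One slip in your overlap count: the disjoint balls $B(y_j,R/16)$ with $d(y_i,y_j)<R$ lie in $B(y_i,R+R/16)$, not in $B(y_i,R)$; this only perturbs the constant.

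The one substantive issue is the range of $r$. Your construction needs $d(x,y_i)\ge r\ge 5R/16+\varepsilon$ to perform the push, and you dismiss the complementary range as the easy case; but for $r\ll R'$ the statement as literally written is false (already in $\R^n$ one has $\Vol{x}{r+R'/4}/\Vol{x}{r}\to\infty$ as $r\to 0$), so no proof can cover it. This defect is shared by the paper, whose claim that the case $r\le R$ is ``obvious by doubling'' likewise fails for $r$ much smaller than $R'$. The proposition is only ever invoked with $r$ at least comparable to $R'$, and for $R'/4\le r\le R$ it does follow from doubling via $\Vol{x}{r+R'/4}\le\Vol{x}{2r}\le A\,2^\eta\,\Vol{x}{r}$, which together with your main argument covers everything that is actually true. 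You were right to flag that your argument needs $r$ of order $R$; the correct conclusion is that this restriction must be added to the statement, not that the remaining range is routine.
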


\begin{proof}
	The case $r \leq R$ is obvious by the doubling property. For $r > R$, then let $\set*{x_i}_i$ be a maximal family in $B(x, r - R/4)$ such that for any $i \neq j$, $d(x_i, x_j) > R'/2$. Then the balls $B(x_i, R'/4) \subset B(x,r)$ are disjoints, and the balls $B(x_i, R')$ cover $B(x,r+R'/4)$, since a point of $B(x,r+R'/4)$ is at distance at most $R'/2$ of $B(x,r - R'/4)$ (this because $(X,d)$ is a path-metric space). Thus
	
	\begin{equation*}\Vol{x}{r+R'/4} \leq \sum_i \Vol{x_i}{R'} \leq A^2 \sum_i \Vol{x_i}{R'/4} \leq A^2 \Vol{x}{r}.
	\end{equation*}
\end{proof}

\begin{proposition}\label{pr:exp doubling}
	If $(X,d, \mu)$ satisfies $\doubling{R}$ then, there is a $D > 0$, that depends only on the the doubling constants, such that for any $r > 0$, we have:
	
	\begin{equation}\label{eq:quasi_doubling}
		\Vol{x}{r} \leq e^{D\frac{r}{R}} \mu(B(x,R)). 
	\end{equation}
\end{proposition}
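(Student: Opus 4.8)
The plan is to iterate the volume-doubling estimate $\Vol{x}{\rho} \leq A\Vol{x}{\rho/2}$, which holds for all radii $\rho \leq 2R$, so as to pass from the radius $R$ up to an arbitrary radius $r$. The point is that although the doubling inequality is only assumed at scales below $R$, each step only ever compares a ball of some radius with a ball of half that radius centered at the same point, and we only need the \emph{small-radius} form of this comparison at the final stage; the rest of the argument builds $r$ up additively in chunks of size $R/4$, using Proposition \ref{pr:annuli_bound}.

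Concretely, first I would reduce to the case $r \geq R$, since for $r < R$ the bound is immediate from $\doubling{R}$ (indeed $\Vol{x}{r} \le A(R/r)^\eta\mu(B(x,R))$, and $A(R/r)^\eta$ is bounded by an absolute constant times... well, it is $\leq A$ if we also note $r \le R$ — in any case it is $\le e^{D r/R}\mu(B(x,R))$ for $D$ large enough). For $r \geq R$, write $r = R + kR/4 + s$ with $k = \lfloor 4(r-R)/R\rfloor$ a non-negative integer and $0 \le s < R/4$; then $k \le 4r/R$. Apply Proposition \ref{pr:annuli_bound} repeatedly, once for each increment of $R/4$ (each application is legitimate since it only requires $R' = R/4 \le R$), to obtain
\begin{equation*}
	\Vol{x}{r} \leq \Vol{x}{R + (k+1)R/4} \leq C^{k+1}\,\mu(B(x,R)),
\end{equation*}
where $C$ is the constant from Proposition \ref{pr:annuli_bound}. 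Since $k + 1 \leq 4r/R + 1$, we get $C^{k+1} \leq C\,e^{(\log C)\,4 r/R}$, so setting $D = 4\log C + \log C = 5\log C$ (or simply absorbing the additive constant, since for $r \ge R$ we have $1 \le r/R$, so $C^{k+1} \le e^{5(\log C) r/R}$) yields $\Vol{x}{r} \leq e^{D r/R}\mu(B(x,R))$, with $D$ depending only on the doubling constant and order through $C$.

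I do not expect a genuine obstacle here: the only mild subtlety is bookkeeping — making sure the additive decomposition of $r$ into $R/4$-blocks is set up so that each invocation of Proposition \ref{pr:annuli_bound} is at an admissible scale, and then checking that the number of blocks is $O(r/R)$ so that the geometric factor $C^{k+1}$ converts into the claimed exponential $e^{Dr/R}$, uniformly covering the small-$r$ regime as well. There is nothing deep; the proposition is essentially a packaged corollary of Proposition \ref{pr:annuli_bound}.
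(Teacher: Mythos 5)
Your proof is correct and follows essentially the same route as the paper: reduce to $r \geq R$ by monotonicity, write $r$ in additive blocks of size $R/4$ with $k = \lfloor 4(r-R)/R\rfloor$, iterate Proposition \ref{pr:annuli_bound} to get $\Vol{x}{r} \leq C^{k+1}\mu(B(x,R))$, and convert $C^{k+1}$ into $e^{Dr/R}$ using $k+1 = O(r/R)$. The only differences are cosmetic (the paper uses the slightly sharper count $k+1 \leq 4r/R$, giving $D = 4\ln C$ rather than your $5\log C$, and the small-radius case needs only monotonicity of the measure, not the doubling inequality you first invoke).
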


\begin{proof}
	Let $r > R$, $k = \left\lfloor 4\frac{r - R}{R}\right\rfloor$, then we have:
	\begin{equation*}
		\Vol{x}{r} \leq \Vol{x}{R + (k+1)R/4},
	\end{equation*} 
	
	\noindent thus by proposition \ref{pr:annuli_bound}, $\Vol{x}{r} \leq C^{k+1}\Vol{x}{R}$. Moreover, $k + 1 \leq 4\frac{r}{R} - 3 \leq 4\frac{r}{R}$, and so:
	
	\begin{equation*}
		\Vol{x}{r} \leq \exp\left(4 \ln\left(C\right) \frac{r}{R} \right) \Vol{x}{R},
	\end{equation*}
	\noindent and so we get \eqref{eq:quasi_doubling} with $D = 4 \ln(C)$.
	
	If $r \leq R$, then:
	\begin{equation*}
		\mu(B(x,r)) \leq \mu(B(x,R)) \leq e^{D\frac{r}{R}} \mu(B(x,R))
	\end{equation*}
	
	\noindent and thus \eqref{eq:quasi_doubling} still holds.
\end{proof}

Similarly to how we always use $A$ for the doubling constant, $D$ will always be used for this constant $D = 8 \log A$.

\begin{proposition}\label{pr:volume different centers}
	Let $X$ satisfies $\doubling{R}$, let $r \leq R$, then there exists a constant $C > 0$, that depends only on the doubling constant and order, such that for any $x, y \in X$, $\Vol{x}{r} \leq Ce^{D\frac{d(x,y)}{r}}\Vol{y}{r}$.
\end{proposition}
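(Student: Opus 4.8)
The plan is to obtain the bound by chaining together a controlled number of doubling steps along an almost-geodesic joining $x$ to $y$. First I would record two elementary facts: the doubling constant satisfies $A \geq 1$ (take $r = r'$ in \eqref{eq:def doubling}), and $\log A \leq D = 8\log A$. Consequently it suffices to produce an estimate of the form $\Vol{x}{r} \leq A^{1 + d(x,y)/r}\,\Vol{y}{r}$, since then $A^{1 + d(x,y)/r} = A\,e^{(\log A)\,d(x,y)/r} \leq A\,e^{D\,d(x,y)/r}$, giving the claim with $C = A$.

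Next I would use that $(X,d)$ is a path metric space. Given $\varepsilon > 0$, choose a continuous path from $x$ to $y$ of length $L \leq d(x,y) + \varepsilon$, parametrize it by arc length, and sample points $x = z_0, z_1, \dots, z_N = y$ along it with $N = \lceil L/r \rceil$ and consecutive samples at distance at most $L/N \leq r$. This gives $N \leq L/r + 1 \leq (d(x,y) + \varepsilon)/r + 1$.

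For each $i$, since $d(z_i, z_{i+1}) \leq r$ we have $B(z_i, r) \subset B(z_{i+1}, 2r)$, and because $r \leq R$ the ball $B(z_{i+1}, r)$ has radius $\leq R$, so the form $\mu(2B) \leq A\mu(B)$ of $\doubling{R}$ (valid for balls of radius $\leq R$, as noted after Definition \ref{def:R doubling}) yields $\Vol{z_i}{r} \leq A\,\Vol{z_{i+1}}{r}$. Telescoping over $i = 0, \dots, N-1$ gives $\Vol{x}{r} \leq A^N\,\Vol{y}{r} \leq A^{1 + (d(x,y)+\varepsilon)/r}\,\Vol{y}{r}$, and letting $\varepsilon \to 0$ produces the desired inequality $\Vol{x}{r} \leq A^{1 + d(x,y)/r}\,\Vol{y}{r} \leq A\,e^{D\,d(x,y)/r}\,\Vol{y}{r}$.

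The only points needing care are that every ball appearing in the chaining has radius $\leq 2R$ so that $\doubling{R}$ genuinely applies — which is automatic from $r \leq R$ — and the passage to an $\varepsilon$-almost-geodesic forced by the fact that $d$ is only an infimum of path lengths; since this device is already used elsewhere in the paper (e.g.\ in the proof of Proposition \ref{pr:annuli_bound}), I expect no real obstacle. The value $r = R$ is handled by the continuity of $\mu$ from below, so it is harmless to state the $\mu(2B)\le A\mu(B)$ bound for radii $\le R$. Keeping the convention $D = 8\log A$ makes the dependence of $C$ on the doubling constants explicit.
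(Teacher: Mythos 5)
Your proof is correct. The chaining is sound: $A\ge 1$ follows from taking $r=r'$ in \eqref{eq:def doubling}, each step $\Vol{z_i}{r}\le\mu(B(z_{i+1},2r))\le A\,\Vol{z_{i+1}}{r}$ is legitimate because $B(z_{i+1},r)$ has radius $\le R$ (and at $r=R$ one can equally invoke \eqref{eq:def doubling} with $r'=2r\le 2R$), and the count $N\le d(x,y)/r+1+\varepsilon/r$ together with $\log A\le D=8\log A$ gives the stated bound with $C=A$ (or $2^\eta A$ if one insists on the constant from \eqref{eq:def doubling}). The route is genuinely different from the paper's, though: the paper simply writes $B(x,r)\subset B(y,r+d(x,y))$ and then invokes Propositions \ref{pr:annuli_bound} and \ref{pr:exp doubling} (applied at scale $r$, using that $r$-doubling holds with the same constant) to compare $\Vol{y}{r+d(x,y)}$ with $\Vol{y}{r}$, yielding the inequality with $C$ of order $e^{D}$. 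Your argument re-derives the exponential growth from scratch by walking an $\varepsilon$-almost-geodesic, which makes the proof self-contained and gives a slightly cleaner constant, at the cost of redoing work that the paper has already packaged into Proposition \ref{pr:exp doubling}; indeed that proposition is itself proved by an annulus-covering argument that uses the path-metric hypothesis in essentially the same way your sampling does. Both proofs are valid and both depend only on the doubling constants, as required.
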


\begin{proof}
	We have the inclusion $B(x,r) \subset B(y, r + d(x,y)) \subset B(y, R + d(x,y))$. Then, by proposition \ref{pr:annuli_bound}, we have:
	\begin{equation*}
		\Vol{x}{r} \leq A^8 \Vol{y}{d(x,y)},
	\end{equation*}
	
	\noindent then using proposition \ref{pr:exp doubling}:	
	\begin{equation*}
		\Vol{x}{r} \leq C e^{D\frac{d(x,y)}{R}} \Vol{y}{r} \leq C e^{D\frac{d(x,y)}{r}}\Vol{y}{r}.
	\end{equation*}
\end{proof}

\begin{proposition}\label{pr:bigger R}
	If $(X,d, \mu)$ satisfies $\doubling{R}$, then it also satisfies $\doubling{R'}$ for any $R' > 0$, with a doubling constant $A_{R'} = A_R$ if $R' \leq R$, and $A_{R'} = e^{2D\frac{R'}{R}}$ if $R' > R$.
\end{proposition}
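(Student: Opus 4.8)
The plan is to reduce everything to the ball formulation of the doubling property, $\mu(2B)\le A\,\mu(B)$, and to split into the two cases $R'\le R$ and $R'>R$. Throughout, I will use the equivalence between \eqref{eq:def doubling} and \eqref{eq:def doubling alt} recorded just after Definition \ref{def:R doubling}, so that it suffices to produce the correct constant in the ball form.

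First, suppose $R'\le R$. Then any ball of radius less than $R'$ is in particular a ball of radius less than $R$, so the inequality $\mu(2B)\le A_R\,\mu(B)$ holds for every such ball by hypothesis; this is exactly $\doubling{R'}$ with constant $A_{R'}=A_R$. Next, suppose $R'>R$. I would show directly that $\mu(2B)\le e^{2DR'/R}\mu(B)$ for every ball $B=B(x,r)$ with $r<R'$. If $r<R$ this is immediate from $\doubling{R}$, since $A_R\le e^{2DR'/R}$ (recall $D>0$ and $R'/R>1$). If instead $R\le r<R'$, I would apply Proposition \ref{pr:exp doubling} at radius $2r$ to get $\Vol{x}{2r}\le e^{2Dr/R}\mu(B(x,R))$; since $R\le r$ gives $B(x,R)\subset B(x,r)$ and $r<R'$ gives $e^{2Dr/R}\le e^{2DR'/R}$, we obtain $\Vol{x}{2r}\le e^{2DR'/R}\Vol{x}{r}$. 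Combining the two subcases yields $\doubling{R'}$ with $A_{R'}=e^{2DR'/R}$.

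There is no genuine obstacle here; this is essentially bookkeeping of constants. The only points requiring a little care are invoking Proposition \ref{pr:exp doubling} at radius $2r$ rather than $r$ — so that the exponent is $2Dr/R$ and can be bounded by $2DR'/R$ — and remembering that, as noted after Definition \ref{def:R doubling}, the ball-form constant and the exponent $\eta=\log_2 A$ in \eqref{eq:def doubling} are linked, so that the stated $A_{R'}$ determines the corresponding quantitative doubling inequality.
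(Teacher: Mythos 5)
Your proof is correct and follows essentially the same route as the paper: the case $R'\le R$ is immediate, and for $R\le r<R'$ you invoke Proposition \ref{pr:exp doubling} at radius $2r$ together with the inclusion $B(x,R)\subset B(x,r)$ to get $\Vol{x}{2r}\le e^{2Dr/R}\Vol{x}{r}\le e^{2DR'/R}\Vol{x}{r}$, which is exactly the paper's argument. The bookkeeping of constants matches the statement.
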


\begin{proof}
	The case $R' \leq R$ is obvious. Thus assume $R > R'$, let $r \leq R'$. If $r \leq R$ then the result is trivial since $A_{R} \leq A_{R'}$. If $r > R$, then by proposition \ref{pr:exp doubling}:
	\begin{equation*}
		\Vol{x}{2r} \leq e^{2D\frac{r}{R}} \Vol{x}{r}
	\end{equation*}
	
	Since $e^{2D\frac{r}{R}} \leq e^{2D\frac{R'}{R}}$, we conclude that $\mu$ is R'-doubling, with a doubling constant $A_{R'} = e^{2D\frac{R'}{R}}$.
\end{proof}

With this we can generalise proposition \ref{pr:volume different centers} for any $r > 0$: if $r > R$, we can use the $r$-doubling and apply proposition \ref{pr:volume different centers} for it. The constants are $A_r = e^{2D\frac{r}{R}}$, $D_r = 4\log \left(A_r^2\right) = 16 D \frac{r}{R}$, $A_r^8 = e^{16 D \frac{r}{R}}$. Then we have, for any $x,y\in X$, $r > 0$:

\begin{equation}
	\Vol{x}{r} \leq e^{16 D \frac{r + d(x,y)}{R}} \Vol{y}{r}.
\end{equation}

\begin{proposition}\label{pr:reverse_doubling bigger R}
	Let $(X,d,\mu)$ be a metric measure space that satisfies $\doubling{R}$. If it also satisfies $\rd{R}$, then for any $\kappa > 1$, it satisfies $\rd{\kappa R}$ with a different reverse doubling constant, that depends only on the doubling and reverse doubling constants, and on $\kappa$.
\end{proposition}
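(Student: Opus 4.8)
The plan is to pass to the ball formulation of reverse doubling and to exhibit a constant $a_\kappa\in(0,1)$, depending only on the doubling and reverse doubling constants and orders and on $\kappa$, with $\mu(B(x,r))\le a_\kappa\,\mu(B(x,2r))$ for every $x\in X$ and every $r<\kappa R$; the passage back to the form \eqref{eq:def reverse doubling}, with reverse doubling order $\nu_\kappa=-\log_2 a_\kappa$, is then the same finite-scale computation already carried out after Definition~\ref{def:R doubling} for the doubling property. For radii $r<R$ this ball inequality is precisely the ball form of $\rd{R}$, so the only thing to prove is the range $R\le r<\kappa R$.

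Fix $x\in X$ and such an $r$. Since $X$ is connected and (in the setting of interest) unbounded, the continuous function $z\mapsto d(x,z)$ has connected, unbounded image in $[0,\infty)$ and hence attains the value $\tfrac{3}{2} r$; choose $y$ with $d(x,y)=\tfrac{3}{2} r$. The triangle inequality gives $B(y,r/2)\subset B(x,2r)$, while $B(y,r/2)\cap B(x,r)=\emptyset$ since $d(y,z)<r/2$ forces $d(x,z)>r$; these two sets being disjoint,
\[
  \mu(B(x,2r)) \ge \mu(B(x,r)) + \mu(B(y,r/2)).
\]

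It remains to bound $\mu(B(y,r/2))$ from below by a fixed proportion of $\mu(B(x,2r))$. From $B(x,2r)\subset B(y,\tfrac{7}{2} r)$ we get $\mu(B(x,2r))\le\mu(B(y,\tfrac{7}{2} r))$, and since $\tfrac{7}{2} r<\tfrac{7}{2}\kappa R$, Proposition~\ref{pr:bigger R} shows $X$ is $\tfrac{7}{2}\kappa R$-doubling with a doubling constant $e^{7D\kappa}$ that depends only on $A$ and $\kappa$; applying \eqref{eq:def doubling} to the concentric balls $B(y,r/2)\subset B(y,\tfrac{7}{2} r)$ gives $\mu(B(y,\tfrac{7}{2} r))\le C_\kappa\,\mu(B(y,r/2))$ with $C_\kappa$ depending only on $A$ and $\kappa$. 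Chaining the estimates, $\mu(B(x,2r))\ge\mu(B(x,r))+C_\kappa^{-1}\mu(B(x,2r))$, that is $\mu(B(x,r))\le(1-C_\kappa^{-1})\,\mu(B(x,2r))$, and it suffices to set $a_\kappa=\max\{a,\,1-C_\kappa^{-1}\}$, which lies in $(0,1)$.

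The one point that requires care is producing the auxiliary ball $B(y,r/2)$ inside the annulus $B(x,2r)\setminus B(x,r)$ for $r$ as large as $\kappa R$; once that is available, the rest is just the triangle inequality together with the volume bounds of Proposition~\ref{pr:bigger R}. This is where unboundedness of $X$ is used, and it cannot be dropped (on a bounded space with $0<\mu(X)<\infty$ the claim fails as soon as $2\kappa R$ exceeds the diameter); it holds automatically in our setting, where $X$ is a complete non-compact manifold. Alternatively, the third paragraph can be replaced by an appeal to the earlier proposition stating that $\doubling{R}$ and the non-emptiness of $X\setminus B(x,3R/4)$ for all $x$ imply $\rd{R/2}$: apply it with $R$ replaced by $2\kappa R$, obtaining $\doubling{2\kappa R}$ from Proposition~\ref{pr:bigger R} and $X\setminus B(x,\tfrac{3}{2}\kappa R)\neq\emptyset$ from unboundedness.
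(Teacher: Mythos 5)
Your argument has a genuine gap: it does not prove the statement with the reverse doubling \emph{order} preserved. The notation $\rd{\kappa R}$ means reverse doubling of the \emph{same} order $\nu$ at the scale $\kappa R$ --- only the constant $a$ is allowed to change --- and the remark immediately following the proposition stresses that conservation of the order is the whole point; it is also what is used downstream (Corollary \ref{cor:riesz_bounded} invokes this proposition to obtain ``$bR$-reverse doubling of order $\nu$'' and needs $s\le\nu$ for that same $\nu$). Your route passes to the ball formulation $\mu(B)\le a_\kappa\,\mu(2B)$ for $r<\kappa R$ and then converts back to the ratio form; but, as in the computation after Definition \ref{def:def reverse doubling}, that conversion only yields the order $\nu_\kappa=-\log_2 a_\kappa$. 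With $a_\kappa=\max\{a,\,1-C_\kappa^{-1}\}$ this is in general strictly smaller than $\nu$ (when $C_\kappa$ is large, $-\log_2(1-C_\kappa^{-1})$ is close to $0$) and it deteriorates as $\kappa$ grows. So what you actually establish is $(\mathbf{RD})^{\nu_\kappa}_{\kappa R}$ for some possibly much smaller $\nu_\kappa$, not $\rd{\kappa R}$.

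The repair is to verify the ratio form \eqref{eq:def reverse doubling} directly for $0<r\le r'\le\kappa R$ by splitting at $R$, which is what the paper does. If $r'\le R$, apply $\rd{R}$. If $r\le R<r'$, use monotonicity: $\Vol{x}{r'}\ge\Vol{x}{R}\ge a\left(R/r\right)^\nu\Vol{x}{r}\ge a\kappa^{-\nu}\left(r'/r\right)^\nu\Vol{x}{r}$, since $R\ge r'/\kappa$. If $R<r\le r'$, factor $\frac{\Vol{x}{r'}}{\Vol{x}{r}}=\frac{\Vol{x}{r'}}{\Vol{x}{R}}\cdot\frac{\Vol{x}{R}}{\Vol{x}{r}}$, bound the first quotient below by $\kappa^{-\nu}\left(r'/R\right)^\nu$ (monotonicity again) and the second by $A_\kappa^{-1}\left(R/r\right)^{\eta}$ using the $\kappa R$-doubling from Proposition \ref{pr:bigger R}; since $\eta\ge\nu$ and $R/r\ge\kappa^{-1}$, the product is at least a constant times $\left(r'/r\right)^\nu$. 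Note also that this argument uses none of the extra structure (connectedness, unboundedness, path metric) that your construction of the auxiliary ball $B(y,r/2)$ in the annulus requires, whereas the proposition is stated for an arbitrary measured metric space.
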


The notable part of this proposition is that the reverse doubling order is the same.

\begin{proof}
	By proposition \ref{pr:bigger R}, $\mu$ is $\kappa R$-doubling for all $\kappa$, with some doubling order $\eta = \eta(\kappa)$. We take a point $x\in M$, and $r, r'$ with $0 < r \leq r' \leq \kappa R$. We want to prove that there's some constant $a_\kappa$ such that, for any such $x, r, r'$:
	
	\begin{equation*}\label{eq:reverse_kappa_doubling}
		\frac{\Vol{x}{r'}}{\Vol{x}{r}} \geq a_\kappa \left(\frac{r'}{r}\right)^\nu.
	\end{equation*}
	
	If $0 < r \leq r' \leq R$, then there's nothing to do but apply $\rd{R}$. If $0 < r \leq R < r' \leq \kappa R$, then:
	
	\begin{equation*}
		\frac{\Vol{x}{r'}}{\Vol{x}{r}} \geq \frac{\Vol{x}{R}}{\Vol{x}{r}} \geq a \left(\frac{R}{r}\right)^\nu \geq a \kappa^{-\nu} \left(\frac{r'}{r}\right)^\nu.
	\end{equation*}
	
	Finally, when $R < r \leq r' < \kappa R$, then:
	
	\begin{align*}
		\frac{\Vol{x}{r'}}{\Vol{x}{r}} 
		&\geq \frac{\Vol{x}{\frac{r'}{\kappa}}}{A \kappa^\eta \Vol{x}{\frac{r}{\kappa}}} \\
		&\geq \frac{a}{A\kappa^\eta} \parenfrac{r'}{r}^\nu
	\end{align*}
	
	Thus \eqref{eq:reverse_kappa_doubling} holds for $a_\kappa= \min\left(a, a\kappa^{-\nu}, aA^{-1}\kappa^{-\eta}\right) = a A^{-1}\kappa^{-\eta}$.
\end{proof}

\begin{proposition}\label{p_covering_card}
	Let $(X,d,\mu)$ satisfies $\doubling{R}$. Take $x \in X$, $r > 0$, and let $B = B(x,r)$. Let $\delta$ be such that $0 < \delta \leq \min(r, R)$, and $\set{x_i}_i \subset B$ be a family of points such that the balls $B_i = B(x_i, \delta)$ form a covering of $B$ and that for any $i \neq j$, $\frac{1}{2}B_i \cap \frac{1}{2} B_j = \emptyset$.

	Then there are constants $C, c$, depending only on the doubling constant such that
	
	\begin{equation}
		\card(I) \leq C e^{c\frac{r}{\delta}}.
	\end{equation}
\end{proposition}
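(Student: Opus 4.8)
The plan is a volume-counting argument. First I would note that the half-balls $\tfrac12 B_i = B(x_i,\delta/2)$, $i\in I$, are pairwise disjoint by hypothesis and, since $x_i\in B(x,r)$ and $\delta/2\le r/2$, all lie inside $B(x,2r)$; hence
\[
\sum_{i\in I}\Vol{x_i}{\delta/2}\le\Vol{x}{2r}.
\]
So it suffices to bound each $\Vol{x_i}{\delta/2}$ from below by a fixed multiple of $\Vol{x}{2r}$, more precisely to show $\Vol{x}{2r}\le Ce^{c r/\delta}\Vol{x_i}{\delta/2}$ for constants $C,c$ depending only on the doubling constant.

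To do this, observe that since $x_i\in B(x,r)$ we have $B(x,2r)\subset B(x_i,3r)$, so it is enough to produce a doubling-type estimate centred at $x_i$ going from radius $\delta/2$ up to $3r$, that is, $\Vol{x_i}{3r}\le Ce^{c r/\delta}\Vol{x_i}{\delta/2}$ (recall $\delta/2\le R$). I would split into two cases. If $3r\le R$, then iterating $\mu(2B')\le A\mu(B')$, which is legitimate since all radii involved are $\le R$, about $\log_2(6r/\delta)$ times gives $\Vol{x_i}{3r}\le A(6r/\delta)^\eta\,\Vol{x_i}{\delta/2}$, and $(6r/\delta)^\eta\le e^{6\eta r/\delta}$ because $\log t\le t$. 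If $3r>R$, I would first invoke Proposition~\ref{pr:exp doubling} to descend from scale $3r$ to scale $R$, $\Vol{x_i}{3r}\le e^{3Dr/R}\Vol{x_i}{R}$, and then iterate the doubling inequality from $R$ down to $\delta/2$, $\Vol{x_i}{R}\le A(2R/\delta)^\eta\Vol{x_i}{\delta/2}$; since in this case $R<3r$ while $R\ge\delta$, both exponents $3r/R$ and $\log(2R/\delta)$ are at most a fixed multiple of $r/\delta$, so again $\Vol{x_i}{3r}\le Ce^{c r/\delta}\Vol{x_i}{\delta/2}$, with $c$ of the shape $3D+6\eta$.

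Combining the two observations,
\[
\card(I)\,C^{-1}e^{-c r/\delta}\,\Vol{x}{2r}\le\sum_{i\in I}\Vol{x_i}{\delta/2}\le\Vol{x}{2r},
\]
and dividing through by $\Vol{x}{2r}$ (a ball of finite positive measure) yields $\card(I)\le Ce^{c r/\delta}$, the desired bound. The same computation incidentally shows $\card(I)$ is finite, so there is no issue with $I$ being a priori uncountable.

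The only real difficulty is the bookkeeping around the scale $R$: the doubling property is available only below scale $R$, so when $r$ is large compared to $R$ one cannot double directly from $\delta/2$ up to $3r$. The remedy is to route the estimate through radius $R$ using the exponential volume-growth bound of Proposition~\ref{pr:exp doubling}, and then to check, using $\delta\le R$ and $R<3r$ in that case, that the accumulated exponent stays $O(r/\delta)$. Everything else is a routine iteration of the doubling inequality together with the inclusions $B(x_i,\delta/2)\subset B(x,2r)\subset B(x_i,3r)$.
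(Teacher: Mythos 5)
Your proof is correct and follows essentially the same volume-counting strategy as the paper: the disjoint half-balls sit inside a slightly enlarged ball, and each small ball's volume is bounded below by $C^{-1}e^{-cr/\delta}$ times the large ball's volume via iterated doubling combined with Proposition~\ref{pr:exp doubling} above scale $R$. The paper routes the volume comparison through $\Vol{x}{\delta}$ using Proposition~\ref{pr:volume different centers} rather than through $B(x_i,3r)$, but this is only a bookkeeping difference.
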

\begin{proof}
	For any $i$, $B_i \subset B(x, r+ \delta)$, and since $\delta \leq R$, then we can use proposition \ref{pr:annuli_bound} to get
	\begin{equation*}
		\Vol{x}{r + \delta} \leq C \Vol{x}{r}.
	\end{equation*}
	
	Now, if $r > R$, then by proposition \ref{pr:exp doubling}, since $\delta \leq R$ then $\mu$ is $\delta$ doubling with the same doubling constant as that of the $R$-doubling, and:
	 \begin{equation*}
	 	\Vol{x}{r} \leq e^{D\frac{r}{\delta}}\Vol{x}{\delta}
	 \end{equation*}
	
	Moreover by proposition \ref{pr:volume different centers}: 
	\begin{equation*}
		\Vol{x}{\delta} \leq C e^{D\frac{d(x,x_i)}{\delta}}\Vol{x_i}{\delta} \leq C e^{D\frac{r}{\delta}}\mu(B_i),
	\end{equation*}
	
	\noindent using that, since $x_i \in B$, then $d(x,x_i) \leq r$. Thus we have $\mu(B(x,r)) \leq C e^{2D\frac{r}{\delta}} \mu(B_i)$, and the constant $C$ depends only on the doubling constants. We then have:	
	\begin{align*}
		\left(\card{I} \right)\Vol{x}{r + \delta} &\leq Ce^{2D\frac{r}{\delta}}  \sum_{i \in I} \mu(B_i) \\
		&\leq ACe^{2D\frac{r}{\delta}}  \sum_i \mu\left(\frac{1}{2}B_i\right) \\
		&\leq Ce^{2D\frac{r}{\delta}}  \Vol{x}{r+ \delta}.
	\end{align*}
	
	Thus $\card(I) \leq Ce^{2D\frac{r}{\delta}} $ and the constant $C$ depends only on the doubling constants.
\end{proof}

\begin{remark}
	For any ball $B$, such a covering always exists: take for $\set{x_i}_i \subset B$ a maximal family with $d(x_i, x_j) \geq \delta$ for any $i\neq j$.
\end{remark}

\begin{proposition}
	Let $M_R$ be the centered maximal function defined by:
	
	\begin{equation}
		\forall f\in L_{loc}^1(M),\; M_R f(x) = \sup_{r < R} \fint_{B(x,r)} |f| \diff \mu.
	\end{equation}
	
	Then, if $\mu$ satisfies $\doubling{R}$, $M_{R/2}$ is bounded on $L^p$ for all $p \in (1, +\infty]$, and the operator norm is bounded by a constant that only depends on the doubling constant $A$ and on $p$.
\end{proposition}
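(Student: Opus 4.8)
The plan is to run the classical Hardy–Littlewood argument in the truncated setting: the bound $\|M_{R/2}f\|_\infty\le\|f\|_\infty$ is immediate and $M_{R/2}$ is sublinear, so it suffices to prove that $M_{R/2}$ is of weak type $(1,1)$ with a constant depending only on $A$, and then to invoke the Marcinkiewicz interpolation theorem. Measurability of $M_{R/2}f$ causes no trouble: for fixed $x$ the functions $r\mapsto\mu(B(x,r))$ and $r\mapsto\int_{B(x,r)}|f|\diff\mu$ are the increasing limits of the corresponding quantities on the smaller concentric balls, hence non-decreasing and left-continuous, so the supremum defining $M_{R/2}f(x)$ is unchanged when $r$ runs only over the rationals in $(0,R/2)$; and for fixed $r$ the maps $x\mapsto\mu(B(x,r))$ and $x\mapsto\int_{B(x,r)}|f|\diff\mu$ are lower semicontinuous by Fatou's lemma, so $M_{R/2}f$ is a countable supremum of Borel functions.

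For the weak type estimate, fix $\lambda>0$ and set $E_\lambda=\{x\in M:\ M_{R/2}f(x)>\lambda\}$. By inner regularity of $\mu$ it is enough to bound $\mu(K)$ for an arbitrary compact $K\subset E_\lambda$. Each $x\in K$ is the centre of a ball $B(x,r_x)$ with $r_x<R/2$ and $\int_{B(x,r_x)}|f|\diff\mu>\lambda\,\mu(B(x,r_x))$; finitely many of these cover $K$, and the finite form of the Vitali covering lemma extracts a pairwise disjoint subfamily $B(x_1,r_1),\dots,B(x_k,r_k)$ with $K\subset\bigcup_{j=1}^{k}B(x_j,3r_j)$. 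This is the only point where the truncation at $R/2$ matters: since $r_j<R/2$ one has $B(x_j,3r_j)\subset B(x_j,4r_j)=2\bigl(2B(x_j,r_j)\bigr)$ with $2r_j<R$, so two applications of $\doubling{R}$ in the form $\mu(2B)\le A\mu(B)$ (valid for balls of radius $<R$) give $\mu(B(x_j,3r_j))\le A^2\mu(B(x_j,r_j))$. Using the disjointness,
\[
\mu(K)\le\sum_{j=1}^{k}\mu\bigl(B(x_j,3r_j)\bigr)\le A^2\sum_{j=1}^{k}\mu\bigl(B(x_j,r_j)\bigr)\le\frac{A^2}{\lambda}\sum_{j=1}^{k}\int_{B(x_j,r_j)}|f|\diff\mu\le\frac{A^2}{\lambda}\|f\|_1,
\]
and taking the supremum over such $K$ gives $\mu(E_\lambda)\le A^2\lambda^{-1}\|f\|_1$.

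Marcinkiewicz interpolation between this weak $(1,1)$ bound (constant $A^2$) and the bound $\|M_{R/2}f\|_\infty\le\|f\|_\infty$ then yields, for every $p\in(1,\infty)$, a constant $C_p$ depending only on $p$ with $\|M_{R/2}f\|_p\le C_p\,A^{2/p}\|f\|_p$; together with the trivial case $p=\infty$ this is the assertion, the operator norm depending only on $A$ and $p$. I expect no genuine obstacle here; the only delicate point is the radius bookkeeping above, which is exactly why the statement works with the maximal function truncated at $R/2$ rather than $R$ — under $\doubling{R}$ alone one cannot bound $\mu(3B)$ by $\mu(B)$ uniformly over all balls of radius up to $R$.
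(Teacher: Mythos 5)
Your proof is correct and follows essentially the same route as the paper: the trivial $L^\infty$ bound, a weak $(1,1)$ estimate obtained from a Vitali-type covering argument in which the truncation $r<R/2$ guarantees that the dilated balls still have radius below $2R$ so that two applications of $\doubling{R}$ give the factor $A^2$, and then Marcinkiewicz interpolation. The only cosmetic difference is that you use the finite Vitali lemma on a compact subset together with inner regularity of $\mu$, whereas the paper applies its infinite-subfamily version of the covering lemma directly to $E_\lambda$; both yield the same constant.
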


We will use the following classical results:

\begin{lemma}[Vitali's covering lemma]\label{Vitali lemma}
	Let $(X,d)$ be a separable metric space, and $\set{B_j}_{j\in J}$ a collection of balls, such that $\sup_j r(B_j) < \infty$. Then for any $c > 3$ there exists a subcollection $\set{B_{j_n}}_{n\in \N} \subset \set{B_j}_{j\in J}$ such that the $B_{j_n}$ are pairwise disjoint and $\bigcup_{j\in J} B_j \subset \bigcup_{n\in \N} c B_{j_n}$.
\end{lemma}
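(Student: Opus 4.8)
The plan is to make a \emph{greedy} selection of pairwise-disjoint balls organized by size: group the balls into geometric size-bands with ratio adapted to $c$, within each band pick a maximal pairwise-disjoint subfamily among the balls not already met by an earlier choice, and then check that every ball of the original family is swallowed by the $c$-dilate of some selected ball and that the selected family is countable.

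Concretely, set $R_0 = \sup_j r(B_j) < \infty$ and discard the (empty) balls of radius $0$. Since $c > 3$ we have $2/(c-1) < 1$, so we may fix $\theta \in (0,1)$ with $1 + 2/\theta \le c$ (any $\theta \in [2/(c-1),1)$ works). For $k \ge 1$ put $J_k = \set{j : R_0\theta^k < r(B_j) \le R_0\theta^{k-1}}$; the $J_k$ partition $J$. Define $\mathcal{G}_k$ by induction: $\mathcal{G}_1$ is a maximal (for inclusion) pairwise-disjoint subfamily of $\set{B_j : j \in J_1}$, and for $k \ge 2$, $\mathcal{G}_k$ is a maximal pairwise-disjoint subfamily of the set of those $B_j$, $j \in J_k$, that meet no ball of $\mathcal{G}_1 \cup \dots \cup \mathcal{G}_{k-1}$. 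These maximal families exist by Zorn's lemma, the union of a chain of pairwise-disjoint families being again pairwise disjoint. Let $\mathcal{G} = \bigcup_{k\ge1}\mathcal{G}_k$; it is pairwise disjoint by construction.

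To check the covering property, fix $B_j = B(y,s)$ with $j \in J_k$. If $B_j \in \mathcal{G}_k$ there is nothing to do. Otherwise either $B_j$ meets a ball of $\mathcal{G}_1\cup\dots\cup\mathcal{G}_{k-1}$, or $B_j$ was eligible for $\mathcal{G}_k$ but not chosen, so by maximality of $\mathcal{G}_k$ it meets a ball of $\mathcal{G}_k$; in both cases $B_j$ meets some $B = B(z,t) \in \mathcal{G}_1\cup\dots\cup\mathcal{G}_k$. Then $B \in \mathcal{G}_l$ for some $l \le k$, whence $t > R_0\theta^l \ge R_0\theta^k \ge \theta s$. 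Picking $w \in B_j \cap B$, any $x \in B_j$ satisfies $d(x,z) \le d(x,y) + d(y,w) + d(w,z) < 2s + t < (1 + 2/\theta)t \le ct$, i.e. $B_j \subset cB$. Hence $\bigcup_{j\in J} B_j \subset \bigcup_{B\in\mathcal{G}}cB$. Finally, $X$ being separable and the balls in $\mathcal{G}$ being nonempty, pairwise-disjoint open sets, each contains a point of a fixed countable dense set and distinct balls give distinct points, so $\mathcal{G}$ is at most countable; enumerating $\mathcal{G} = \set{B_{j_n}}_{n\in\N}$ gives the desired subcollection.

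There is no genuinely difficult step here; the points that need attention are the calibration of the band ratio $\theta$ to the prescribed $c$ — this is the only place the hypothesis $c > 3$ enters, and it is exactly what the crude triangle-inequality estimate $d(x,z) < 2s+t$ demands — the use of Zorn's lemma to produce the maximal disjoint subfamilies, and the separability argument giving countability of the selected family.
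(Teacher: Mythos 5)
The paper does not prove this lemma at all: it is quoted as a ``classical result'' and used as a black box, so there is no in-paper argument to compare yours against. Your proof is correct and is the standard greedy argument (geometric size-bands calibrated to the dilation factor, maximal disjoint subfamilies chosen band by band via Zorn, the triangle-inequality estimate $d(x,z) < 2s + t < (1+2/\theta)\,t \le ct$ using $t > \theta s$, and separability for countability); the calibration $\theta \in [2/(c-1),1)$ correctly exploits the full range $c>3$ rather than the cruder $c=5$ version, which matters since the paper invokes the lemma with $c=4$.
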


\begin{theorem}[Marcinkiewicz interpolation theorem]\label{th:Marcinkiewicz}
	Let $(X,\mu)$ be a measure space, $T$ a sublinear operator acting on functions, i.e.\ there is a $\kappa > 0$ such that for any $f, g$ measurable, then $Tf, Tg$ are measurable and $T(f+g)(x) \leq \kappa \left(Tf(x) + Tg(x)\right)$ for almost every $x \in X$.
	
	Let $1 \leq p < r \leq \infty$. If $r < \infty$, assume that:	
	\begin{align*}
		\forall f \in L^p,\; \mu\set*{x\in X:\; Tf(x) > \lambda} &\leq \frac{A}{\lambda^p}\|f\|_p^p,\\
		\forall f \in L^r,\; \mu\set*{x\in X:\; Tf(x) > \lambda} &\leq \frac{B}{\lambda^r}\|f\|_r^r,
	\end{align*}
	
	If $r = \infty$, then assume instead that: 
	\begin{align*}
		\forall f \in L^p,&\; \mu\set*{x\in X:\; Tf(x) > \lambda}\leq \frac{A}{\lambda^p}\|f\|_p^p,\\
		\forall f \in L^\infty,&\; |Tf(x)|\leq B|f(x)|,\quad {a.e.\ } x \in X
	\end{align*}
	
	Then, for every $s \in (p,r)$, for all $f \in L^s$, $Tf\in L^s$ and:
	
	\begin{equation}
		\|Tf\|_s \leq C(A,B, p, r, s, \kappa) \|f\|_s.
	\end{equation}
\end{theorem}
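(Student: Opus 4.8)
The plan is to run the classical truncation argument, cutting $f$ at a height proportional to $\lambda$ and feeding the two pieces to the two weak-type hypotheses. (Throughout, $Tf$ may be read as $|Tf|$, which is the usual convention for sublinearity and makes the layer-cake identity below literal; this is also consistent with the stated $r=\infty$ hypothesis.) Fix $s\in(p,r)$ and $f\in L^s$; we may assume $\|f\|_s>0$, since if $\|f\|_s=0$ then $f=0$ a.e.\ and the weak bounds force $Tf=0$ a.e. For $\lambda>0$ and a constant $c>0$ to be chosen, write $f=g_\lambda+h_\lambda$ with $g_\lambda=f\,\mathbf{1}_{\{|f|>c\lambda\}}$ and $h_\lambda=f\,\mathbf{1}_{\{|f|\le c\lambda\}}$. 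Since $p<s<r$, the pointwise bounds $|g_\lambda|^p\le(c\lambda)^{p-s}|f|^s$ and $|h_\lambda|^r\le(c\lambda)^{r-s}|f|^s$ give $g_\lambda\in L^p$ and $h_\lambda\in L^r$, so the hypotheses apply to each piece. Sublinearity gives $Tf\le\kappa(Tg_\lambda+Th_\lambda)$ a.e., hence, up to a null set, $\{Tf>\lambda\}\subset\{Tg_\lambda>\tfrac{\lambda}{2\kappa}\}\cup\{Th_\lambda>\tfrac{\lambda}{2\kappa}\}$, and all these sets are measurable by the measurability clause in the hypothesis.

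Next I would use the layer-cake identity $\|Tf\|_s^s=s\int_0^\infty\lambda^{s-1}\mu\{Tf>\lambda\}\,d\lambda$ (valid because $\lambda\mapsto\mu\{Tf>\lambda\}$ is nonincreasing, hence measurable), estimate $\mu\{Tf>\lambda\}$ by the two weak-type bounds applied to $g_\lambda$ and $h_\lambda$,
\[
\mu\{Tf>\lambda\}\le\frac{(2\kappa)^pA}{\lambda^p}\int_{\{|f|>c\lambda\}}|f|^p\,d\mu+\frac{(2\kappa)^rB}{\lambda^r}\int_{\{|f|\le c\lambda\}}|f|^r\,d\mu,
\]
substitute into the integral, and apply Tonelli's theorem to swap the $\lambda$- and $\mu$-integrals. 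The first term becomes $s(2\kappa)^pA\int_M|f(x)|^p\bigl(\int_0^{|f(x)|/c}\lambda^{s-1-p}\,d\lambda\bigr)\,d\mu(x)$; the inner integral converges because $s-p>0$ and equals $\tfrac{1}{s-p}(|f(x)|/c)^{s-p}$, so the term is $\tfrac{s(2\kappa)^pA}{(s-p)c^{s-p}}\|f\|_s^s$. The second term becomes $s(2\kappa)^rB\int_M|f(x)|^r\bigl(\int_{|f(x)|/c}^\infty\lambda^{s-1-r}\,d\lambda\bigr)\,d\mu(x)$; the inner integral converges because $s-r<0$ and equals $\tfrac{1}{r-s}(|f(x)|/c)^{s-r}$, so the term is $\tfrac{s(2\kappa)^rB\,c^{r-s}}{r-s}\|f\|_s^s$. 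Choosing $c=1$ and adding, $\|Tf\|_s^s\le\bigl(\tfrac{s(2\kappa)^pA}{s-p}+\tfrac{s(2\kappa)^rB}{r-s}\bigr)\|f\|_s^s$, which is the asserted bound (optimizing over $c$ only improves the constant).

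For the endpoint case $p<r=\infty$ I would instead choose the cut height so the low part drops out of the estimate: $Th_\lambda\le B|h_\lambda|\le Bc\lambda$ a.e., so taking $c=\tfrac{1}{2\kappa B}$ gives $\kappa Th_\lambda\le\tfrac{\lambda}{2}$ a.e., whence $\{Tf>\lambda\}\subset\{Tg_\lambda>\tfrac{\lambda}{2\kappa}\}$ up to a null set; only the weak-$(p,p)$ bound enters, and the same layer-cake/Tonelli computation gives $\|Tf\|_s^s\le\tfrac{s(2\kappa)^pA}{(s-p)c^{s-p}}\|f\|_s^s$ with this $c$.

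There is no single hard step here — the proof is essentially bookkeeping — but the points that need care are: checking $g_\lambda\in L^p$ and $h_\lambda\in L^r$ so that the hypotheses genuinely apply; justifying the Tonelli interchange (automatic, since everything in sight is nonnegative and measurable); and verifying that the two power integrals $\int_0^{a}\lambda^{s-1-p}\,d\lambda$ and $\int_{a}^\infty\lambda^{s-1-r}\,d\lambda$ (with $a=|f(x)|/c$) converge, which is exactly and only where the strict inequalities $p<s$ and $s<r$ are used.
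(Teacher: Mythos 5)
Your proof is correct. Note that the paper does not prove this statement at all: Theorem \ref{th:Marcinkiewicz} is quoted as a classical tool (alongside Vitali's covering lemma) and used as a black box, so there is no in-paper argument to compare against; what you have written is the standard truncation proof, and it is complete --- the splitting $f=g_\lambda+h_\lambda$ at height $c\lambda$, the verification that $g_\lambda\in L^p$ and $h_\lambda\in L^r$, the inclusion of level sets via sublinearity, and the layer-cake/Tonelli computation are all handled correctly, with the strict inequalities $p<s<r$ entering exactly where you say they do. Your treatment of the endpoint $r=\infty$ is also fine; in fact the paper's hypothesis $|Tf(x)|\le B|f(x)|$ a.e.\ is stronger than the usual $\|Tf\|_\infty\le B\|f\|_\infty$, and your argument only uses the weaker consequence $Th_\lambda\le Bc\lambda$ a.e., so it would go through under the standard hypothesis as well.
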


\begin{proof}[Proof of the proposition]
	We have, for any $f \in L^\infty(M)$, $\|M_R f\|_\infty \leq \|f\|_\infty$.
	
	If $f \in L^1(M)$, then for any $\lambda > 0$, define:
	\begin{equation*}
		E_\lambda = \set*{x\in M:\; M_{R/2}f(x) > \lambda}.
	\end{equation*}
	
	If $x \in E_\lambda$, then there is some $r_x > 0$ such that $\lambda < \fint_{B(x,r_x)} |f| \diff \mu$, and $2 r_x \leq R$. Then:
	\begin{equation*}
		\mu(B(x,r_x)) \leq \lambda^{-1} \int_{B(x,r)} |f| \diff\mu.
	\end{equation*}
	
	We have $E_\lambda \subset \bigcup_x B(x,r_x)$, thus by Vitali's covering lemma, there is a subcollection $\set*{x_n}$ such that the $B(x_n, r_n)$ are pairwise disjoint and $E_\lambda \subset \bigcup_n B(x_n, 4 r_n)$.
	
	Also, since $r_n < R/2$, and $\mu$ is R-doubling, we have $\Vol{x_n}{4r_n} \leq A^2 \Vol{x_n}{r_n}$. Then:	
	\begin{align*}
		\mu(E_\lambda) &\leq \sum_n \mu(B(x_n, 4r_n)) \\
		&\leq A^2 \sum_n \mu(B(x_n, r_n)) \\
		&\leq A^2 \lambda^{-1} \sum_n \int_{B(x_n, r_n)} |f| \diff\mu  \\
		&\leq A^2 \frac{\|f\|_1}{\lambda}.
	\end{align*}
	
	So, by the Marcinkiewicz interpolation theorem, for any $p \in (1, +\infty)$, $M_{R/2}$ is bounded on $L^p$ with an operator norm $\|M_{R/2}\|_{p\goesto p} \leq C_p$, with $C_p$ depending only on $A$ and $p$.
\end{proof}

\begin{remark}
	Of course, $\doubling{R}$ implies $\doubling{R'}$ for all $R' > R$, then $M_R$ itself is also bounded, but with the constant $C_p$ depending on the constant for $\doubling{2R}$. And so are all the $M_{R'}$ with $R' > R$, with the constant $C_p$ depending on $p$, the $R$-doubling constant, and the ratio $R'/R$.
\end{remark}

\begin{proposition}\label{pr:uncentered equiv centered}
	Let $\tilde{M}_R$ the uncentered maximal function defined by: for all $f\in L_{loc}^1(M)$, 
	
	\begin{equation}
		\tilde{M}_Rf(x) = \sup_{\substack{x \in B,\\r(B) \leq R}} \fint_B |f|\diff\mu.
	\end{equation}
	
	With this supremum to be interpretated as being over all balls $B$ satisfying the given condition, and $r(B)$ being the radius of $B$.
	
	Then, if $\mu$ is R-doubling, there exist some constant $C > 0$ such that $M_R \leq \tilde{M}_R \leq C M_{2R}$.
\end{proposition}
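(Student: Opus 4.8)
The plan is to read off the left-hand inequality directly from the definitions, and to prove the right-hand one by enlarging an arbitrary ball containing $x$ to a ball centered at $x$, controlling the change of measure by the doubling property.

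First I would observe that $M_R \le \tilde M_R$ holds trivially: for fixed $x$, every ball $B(x,r)$ with $r < R$ contains $x$ and has radius at most $R$, so the family of balls over which $M_R f(x)$ takes its supremum is a subfamily of the one defining $\tilde{M}_R f(x)$.

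For the right-hand inequality I would fix $x \in M$ and an arbitrary ball $B = B(y,s)$ with $x \in B$ and $s \le R$, and set $t = s + d(x,y)$. Since $d(x,y) < s$, this gives $t < 2s \le 2R$ and $B(y,s) \subset B(x,t)$, hence
\begin{equation*}
	\fint_B |f|\diff\mu \le \frac{\Vol{x}{t}}{\mu(B)}\fint_{B(x,t)}|f|\diff\mu \le \frac{\Vol{x}{t}}{\mu(B)}\,M_{2R}f(x),
\end{equation*}
the last inequality because $t < 2R$. It then remains to bound $\Vol{x}{t}/\mu(B)$ by a constant depending only on the doubling constant. For this I would use $B(x,t) \subset B(y,s+t) \subset B(y,3s)$, so that $\Vol{x}{t} \le \Vol{y}{3s}$, reducing matters to the doubling estimate $\Vol{y}{3s} \le C\,\Vol{y}{s}$, which is legitimate since $3s \le 3R$: by proposition \ref{pr:bigger R}, $\mu$ is $3R$-doubling with a constant depending only on $A$, and then \eqref{eq:def doubling} furnishes such a $C = C(A)$. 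Combining the estimates and taking the supremum over all admissible balls $B$ yields $\tilde{M}_R f(x) \le C\,M_{2R}f(x)$.

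I do not expect any genuine obstacle here; the only point that needs care is the bookkeeping of radii, so that the doubling hypothesis is invoked only at scales comparable to $R$ (here $3R$) and the final constant depends on nothing but the $R$-doubling constant $A$.
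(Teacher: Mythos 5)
Your proof is correct and follows essentially the same route as the paper's: the left inequality is immediate from inclusion of the defining families of balls, and the right one comes from enlarging $B=B(y,s)\ni x$ to a centered ball of radius less than $2s\le 2R$ and controlling the volume ratio by the doubling property at a scale comparable to $R$. The only (immaterial) difference is that you take radius $t=s+d(x,y)$ where the paper takes $2s$ directly.
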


\begin{proof}
	Since a ball centered at $x$ is a ball containing $x$, $M_R \leq \tilde{M}_R$ is obvious. Now, for some balls $B= B(y,r)$ containing $x$, with radius less than $R$, we have $B \subset B(x, 2r)$ and:
	
	\begin{equation*}
		\fint_B |f| \diff \mu \leq \frac{\Vol{x}{2r}}{\mu(B)}\fint_{B(x,2r)} |f| \diff \mu \leq C M_{2R} f(x).
	\end{equation*}
\end{proof}

\begin{proposition}\label{p_dyad_max}
	Let $(X,d,\mu)$ be a separable metric measure space, and $\dyadic_m$ be a chosen construction of dyadic cubes on $X$. Define the associated dyadic maximal function $M_{d,m}$ by:
	
	\begin{equation}
		M_{d,m}f(x) = \sup_{\substack{Q \in \collection{D}_m \\ x \in Q}} \fint_Q |f| \diff \mu.
	\end{equation}
	
	Then there is a constant $C_p$ such that for any $p > 1$, for any $f\in L^p$, $\|M_{d,m}f\|_p \leq C_p \|f\|_p$.
	
	As a consequence, $M_{d,m,l}$, the maximal function defined the same way, but with the cubes in the supremum being only those of length less than $l$, is also bounded on $L^p$ for all $p > 1$.
\end{proposition}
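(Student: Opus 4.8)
The plan is to prove the weak-type $(1,1)$ bound for $M_{d,m}$ and then interpolate with the trivial $L^\infty$ bound via the Marcinkiewicz interpolation theorem (Theorem \ref{th:Marcinkiewicz}). The essential advantage of the dyadic setting is that the nesting property of the cubes (the third bullet of Theorem \ref{cubes}) lets us run the classical Calderón–Zygmund stopping-time/maximal-cube argument without any covering lemma and, crucially, without any doubling hypothesis on $\mu$.

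First I would record the $L^\infty$ bound $\|M_{d,m}f\|_\infty \le \|f\|_\infty$, which is immediate since each average $\fint_Q|f|\diff\mu$ is at most $\|f\|_\infty$. Next, for the weak-type estimate, fix $f \in L^1(X)$ and $\lambda > 0$, and set $E_\lambda = \set{x : M_{d,m}f(x) > \lambda}$. For each $x \in E_\lambda$ there is a cube $Q \in \dyadic_m$ with $x \in Q$ and $\fint_Q |f|\diff\mu > \lambda$, i.e. $\mu(Q) < \lambda^{-1}\int_Q |f|\diff\mu$. The family of all such cubes covers $E_\lambda$. Now I would extract the \emph{maximal} cubes of this family: since every cube in $\dyadic_m$ has length $\ge \rho^m$ and, by the first bullet of Theorem \ref{cubes}, contains a ball of radius $\rho^m$, the averages $\fint_Q|f|$ are bounded above as $\ell(Q)\to\infty$ along any increasing chain only if $f \in L^1$ forces $\mu(Q)\to\infty$; more simply, each selected cube $Q$ satisfies $\mu(Q) < \lambda^{-1}\|f\|_1 < \infty$, so along any increasing chain of selected cubes the measures are bounded, and by the nesting property each such chain terminates in a maximal selected cube. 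Hence $E_\lambda \subset \bigcup_j Q_j$ where the $Q_j$ are the maximal selected cubes; by the nesting property (third bullet) and the disjointness within a single generation (second bullet), distinct maximal cubes are pairwise disjoint. Therefore
\begin{equation*}
	\mu(E_\lambda) \le \sum_j \mu(Q_j) \le \frac{1}{\lambda}\sum_j \int_{Q_j} |f|\diff\mu = \frac{1}{\lambda}\int_{\bigcup_j Q_j} |f|\diff\mu \le \frac{\|f\|_1}{\lambda}.
\end{equation*}
With the weak-$(1,1)$ and the $L^\infty$ bounds in hand, Theorem \ref{th:Marcinkiewicz} (with $p=1$, $r=\infty$, and $\kappa = 1$ since $M_{d,m}$ is sublinear) gives $\|M_{d,m}f\|_p \le C_p\|f\|_p$ for every $p \in (1,\infty)$, with $C_p$ depending only on $p$.

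For the last sentence, I would observe that $M_{d,m,l} \le M_{d,m}$ pointwise, since restricting the supremum to cubes of length less than $l$ can only decrease it; hence $M_{d,m,l}$ inherits the $L^p$ bound with the same constant. The only point requiring a little care is the extraction of maximal cubes — one must make sure that an increasing chain of selected cubes cannot be infinite, which is where the finiteness $\mu(Q) < \lambda^{-1}\|f\|_1$ of the measure of every selected cube (valid because $f \in L^1$) together with the strict growth of $\mu$ along a nesting chain is used; everything else is a routine repetition of the Euclidean dyadic maximal function argument. I expect this extraction step to be the main, though still minor, obstacle, and it is the reason the statement is phrased for $f \in L^p$ with $p > 1$ (one first proves it for $f \in L^1 \cap L^p$, dense in $L^p$, then passes to the limit).
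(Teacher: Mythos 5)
Your overall strategy is exactly the paper's: an $L^\infty$ bound plus a weak-type $(1,1)$ estimate obtained from a stopping-time decomposition of $E_\lambda$ into dyadic cubes, followed by Marcinkiewicz interpolation; the treatment of $M_{d,m,l}$ by pointwise domination is also fine. However, there is a genuine gap at the step you yourself flag as delicate: the claim that every increasing chain of selected cubes ``terminates in a maximal selected cube.'' The justification you offer — that each selected cube has measure at most $\lambda^{-1}\|f\|_1 < \infty$, combined with growth of $\mu$ along the chain — does not imply termination: a strictly increasing sequence of real numbers that is bounded above need not be eventually constant, and in fact $\mu$ need not even increase strictly along the chain. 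A concrete failure: if $X$ has finite diameter, then for all large $k$ the cube of generation $k$ containing $x$ is all of $X$ (since $B(x_\alpha^k,\rho^k) = X$), so if $\fint_X |f|\diff\mu > \lambda$ the chain of selected cubes containing $x$ is infinite and no maximal selected cube exists. Your covering $E_\lambda \subset \bigcup_j Q_j$ by maximal cubes therefore does not exist in general.

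The paper closes this gap by splitting into two cases: points $x$ for which a maximal selected cube exists, and points for which the selected chain is infinite. For the latter it introduces the regions $\Omega_j = \bigcup_k Q_k^j$ (unions of infinite increasing chains of selected cubes), observes that $\mu(\Omega_j) \le \lambda^{-1}\int_{\Omega_j}|f|\diff\mu < \infty$ by monotone convergence, and then checks that the maximal cubes $Q_i$ and the regions $\Omega_j$ are pairwise disjoint (using the nesting property), so that summing still yields $\mu(E_\lambda) \le \lambda^{-1}\|f\|_1$. With this additional case your argument goes through verbatim; without it, the decomposition step fails on any space of finite measure.
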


\begin{proof}
	Let $f \in L^1(X)$, $\lambda > 0$, we define: 
	\begin{equation*}
		E_\lambda = \set*{x\in X:\; M_{d,m}f(x) > \lambda}.
	\end{equation*}
	
	If $x \in E_\lambda$, then there is a cube $Q\in \dyadic_m$ such that $\fint_Q |f| \diff\mu > \lambda$, and so $Q \subset E_\lambda$. Then there are two possibilities: 
	
	If there is a maximal dyadic cube $P$ containing $x$ such that $\fint_P |f| \diff\mu > \lambda$. This cube satisfies $P \subset E_\lambda$.
	
	If there is no such cube (in which case, $x$ is in a region of space with infinite diameter but finite measure), then define $\Omega = \bigcup_{\substack{Q\in \dyadic_m \\ x\in Q}} Q$. We can always find an arbitrarily large cube containing $x$ which is a subset of $E_\lambda$, and so $\Omega \subset E_\lambda$, and $\mu(\Omega) \leq \lambda^{-1} \int_\Omega |f|\diff\mu < \infty$.
	
	Then take $\set{Q_i}_i$ to be the family of all the maximal dyadic cubes such that $\fint_{Q_i} |f|\diff\mu > \lambda$, and $\set{\Omega_j}_j$ be the family of all the the regions $\Omega_j = \bigcup_k Q_k^j$, where $\set{Q_k^j}$ is an infinite increasing sequence of cubes	with $\fint_{Q_j^k} |f|\diff\mu > \lambda$. The $Q_i, \Omega_j$ are pairwise disjoints: first it is clear by maximality that the $Q_i$ are. Then, if for a cube $Q$, we have $Q \cap \Omega_j \neq \emptyset$, then there is a cube $P\subset \Omega_j$ such that $P \cap Q \neq \emptyset$, thus we have either $P\subset Q$ or $Q \subset P$. In both case, $Q\subset \Omega_j$ since $\Omega_j$ is the union of all cubes containing $P$. This mean both that $Q_i \cap \Omega_j = \emptyset$ for all $i,j$, and that $\Omega_j \cap \Omega_l = \emptyset$ for $j\neq l$.
	
	Thus, we have the disjoint union:
	
	\begin{equation*}
		E_\lambda = \bigcup_i Q_i \cup \bigcup_j \Omega_j,
	\end{equation*}
	
	Then $\mu(Q_i) < \lambda^{-1}\int_{Q_i} |f| \diff\mu$, and $\mu(\Omega_j) \leq \lambda^{-1}\int_{\Omega_j}|f|\diff\mu$. Summing on all cubes and all regions, $\mu(E_\lambda) \leq \lambda^{-1}\int_{E_\lambda} |f| \diff\mu \leq \lambda^{-1} \|f\|_1$. Thus:
	
	\begin{equation}\label{eq:dyadic_max weak type}
		\mu\left( \set*{x\in X:\; M_{d,m}f(x) > \lambda}\right) \leq \frac{\|f\|_1}{\lambda}.
	\end{equation}
	
	Moreover, for $f\in L^\infty(X)$, we clearly have $M_{d,m}f(x) \leq \|f\|_\infty$. Then by Marcienkiewicz interpolation theorem, for any $p > 1$ there is a constant $C_p > 1$ such that $\|M_{d,m}f\|_p \leq C_p \|f\|_p$.
\end{proof}

\subsection{Estimates of operator norms by that of a maximal function}

We refers to the works of C. Pérez and R.L. Wheeden \cite{PerezWheeden03} for a more general approach. Well will first describe one of their result in the more specific context that is of interest to us, then will give a generalization of this result that holds on a $R$-doubling space.

In what follows, we let $(X,d)$ be a separable R-doubling metric space. We take $T$ an operator given by a kernel $K:X\times X\setminus \mathrm{Diag} \goesto \R$, i.e.\ 

\begin{equation}\label{T_def}
	Tf(x) = \int_X f(y)K(x,y)\diff\mu(y).
\end{equation}

We say that the operator $T$, or its kernel $K$, satisfies the condition $\Kcond{}$ if $K$ is non negative and if there are constants $C_1, C_2 > 1$ such that:

\begin{equation}\label{K_cond}
\begin{aligned}
	d(x', y) \leq C_2 d(x,y) &\Rightarrow K(x,y) \leq C_1 K(x', y), \\
	d(x, y') \leq C_2 d(x,y) &\Rightarrow K(x,y) \leq C_1 K(x, y').
\end{aligned}
\end{equation}

We take $\rho > 1$ such as, by theorem \ref{cubes}, for any integer $m\in \Z$, we have a decomposition of $X$ in dyadic cubes $\collection{D}_m$ of lenghts $\rho^\ell$, $\ell \geq m$. We define $\varphi$ as the following functional on balls 

\begin{equation}
	\varphi(B) = \sup_{\substack{x,y \in B\\ d(x,y) \geq \frac{1}{2\rho}r(B)}} K(x,y),
\end{equation}

and $M_\varphi$ to be the following maximal functions:

\begin{equation}
	M_\varphi f(x) = \sup_{\substack{x \in B}} \varphi(B) \int_{B} |f|\diff\mu.
\end{equation}

We want to establish an inequality of the type $\|Tf\|_p \leq C_p \|M_\varphi f\|_p$, as the later can be more convenient to estimate.

For $T$ satisfying $\Kcond{}$, it is shown in $(4.3)$ of \cite{SawyerWheedenZhao96} that $\varphi$ is decreasing in the following sense:

\begin{proposition}\label{p_phi_larger_ball}
	There is a constant $\alpha$, depending only on $C_1$, $C_2$, $\rho$ such that for any balls $B \subset B'$, $\varphi(B') \leq \alpha \varphi(B)$
\end{proposition}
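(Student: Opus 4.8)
I would argue by transporting the pairs that realize $\varphi(B')$ into $B$, at a bounded cost, using the kernel hypothesis $\Kcond{}$ repeatedly. Write $B = B(p,r)$ and recall $\varphi(B') = \sup K(x,y)$, the supremum being over $x,y \in B'$ with $d(x,y) \ge \frac{1}{2\rho} r(B')$. The plan is: for each such admissible pair $(x,y)$, produce $u,w \in B$ with $d(u,w) \ge \frac{1}{2\rho} r$ and $K(x,y) \le \alpha\, K(u,w) \le \alpha\, \varphi(B)$, where $\alpha$ depends only on $C_1,C_2,\rho$; taking the supremum over $(x,y)$ then yields $\varphi(B') \le \alpha\,\varphi(B)$. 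The transport is carried out in two moves — first sliding the first argument from $x$ to $u$ with $y$ frozen, then sliding the second argument from $y$ to $w$ with $u$ frozen — and the path-metric hypothesis on $X$ is what makes the moves possible: along a continuous path $\gamma$ the functions $t \mapsto d(\gamma(t),y)$ and $t \mapsto d(u,\gamma(t))$ are continuous and hence take every intermediate value.

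For the construction: if $B$ exhausts the space (which happens, e.g., when the ambient diameter is $< r/4$) then $B = B'$ and there is nothing to prove, so assume otherwise. If $d(p,y) \ge \frac{1}{2\rho}r$ put $u = p$; otherwise let $u$ be the point at distance $r/4$ from $p$ on a path joining $p$ to a point at distance $r/4$ from $p$. In both cases $u \in \overline{B(p,r/2)} \subset B$, and $d(u,y) \ge \frac{1}{2\rho}r$ (in the first case this is the hypothesis; in the second, $d(u,y) \ge d(u,p) - d(p,y) > \frac r4 - \frac{1}{2\rho}r \ge \frac{1}{2\rho}r$ since $\rho \ge 8$). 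Then pick a path from $u$ to $y$ and let $w$ be the point on it with $d(u,w) = \frac{1}{2\rho}r$, which exists because $d(u,y) \ge \frac{1}{2\rho}r$. Since $d(w,p) \le d(w,u) + d(u,p) \le \frac{1}{2\rho}r + \frac r2 < r$, we have $w \in B$, and $(u,w)$ is admissible for $\varphi(B)$.

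The second move is a single application of the second line of $\Kcond{}$: as $d(u,w) = \frac{1}{2\rho}r \le d(u,y) \le C_2 d(u,y)$, we get $K(u,y) \le C_1 K(u,w)$. For the first move, if $d(u,y) \le d(x,y)$ the first line of $\Kcond{}$ applies at once and gives $K(x,y) \le C_1 K(u,y)$; otherwise, choose along a path $\gamma$ from $x$ to $u$ points $x = x_0, x_1, \dots, x_N = u$ with $d(x_i,y) = \min\!\big(C_2^{\,i} d(x,y),\, d(u,y)\big)$ for $i < N$ — possible by the intermediate value property — so that $d(x_{i+1},y) \le C_2 d(x_i,y)$ for every $i$, and iterate the first line of $\Kcond{}$ along the chain to get $K(x,y) \le C_1^{N} K(u,y)$. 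Because $u,y \in B'$ we have $d(u,y) < 2r(B') \le 4\rho\, d(x,y)$ (using $d(x,y) \ge \frac{1}{2\rho}r(B')$), so $N \le \lceil\log_{C_2}(4\rho)\rceil + 1$ depends only on $C_2$ and $\rho$. Combining the two moves, $K(x,y) \le C_1^{N+1} K(u,w) \le C_1^{N+1}\varphi(B)$, and taking the supremum over admissible pairs gives the proposition with $\alpha = C_1^{\lceil\log_{C_2}(4\rho)\rceil + 2}$.

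The only delicate point is keeping the chain length $N$ bounded by a constant depending solely on $C_1,C_2,\rho$, and this is exactly where the definition of $\varphi$ pays off: the pairs contributing to $\varphi(B')$ are separated at scale $\sim r(B')$, so every distance occurring inside $B'$, in particular $d(u,y)$, is comparable to $d(x,y)$ up to the fixed factor $4\rho$, and one never has to bridge scales that shrink. Everything else is the mechanical iteration of $\Kcond{}$, with the path-metric hypothesis used only to realize the intermediate distances $d(x_i,y)$ and to place the auxiliary points $u$ and $w$. This is, in essence, inequality $(4.3)$ of \cite{SawyerWheedenZhao96}.
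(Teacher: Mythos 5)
Your argument is correct and is in substance the same as the paper's: both proofs iterate the kernel condition $\Kcond{}$ along paths (using the path-metric hypothesis to realize intermediate distances) a number of times bounded in terms of $C_2$ and $\rho$, because the separation requirement $d(x,y)\geq \frac{1}{2\rho}r(B')$ in the definition of $\varphi$ keeps all relevant distances comparable. The only organizational difference is that you construct a specific admissible pair $(u,w)$ in $B$ adapted to $(x,y)$, whereas the paper first proves the iterated form of $\Kcond{}$ as a lemma and then compares $K(x',y')$ to $K(x,y)$ for an \emph{arbitrary} admissible pair $(x,y)$ of $B$ in two moves; both routes yield the same conclusion with a constant depending only on $C_1$, $C_2$, $\rho$.
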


\begin{proof}
	First we want to prove that if $\eqref{K_cond}$ holds, then, for any $C_2 > 1$, there exist a corresponding $C_1$ such that $\eqref{K_cond}$ holds with those new constants. We can of course replace $C_2$ by a smaller constant. To replace it with a smaller, we show that for any integer $k \geq 1$:
	\begin{equation*}
		d(x', y) \leq C_2^k d(x,y) \Rightarrow K(x,y) \leq C_1^k K(x',y),
	\end{equation*}
	\noindent and that the same holds with $(x,y')$ replacing $(x',y)$.
	
	We proceed by induction. The case $k = 1$ is simply \eqref{K_cond}. 
	
	Let $k > 2$, take $x, x', y \in X$ such that $d(x', y) \leq C_2^k d(x,y)$, and suppose that:
	\begin{equation*}
		d(x',y) \leq C_2^{k-1} d(x,y) \Rightarrow K(x,y) \leq C_1^{k-1}K(x'y),
	\end{equation*}
	
	\noindent then, if $d(x',y) \leq C_2^{k-1}d(x,y)$, the result holds and there is nothing to prove. If $d(x',y) > C_2^{k-1} d(x,y)$, then $X$ is a path metric space, so there is a path from $y$ to $x'$ of length $d(x',y)$, and on this path is a point $z$ such that $d(y,z) = C_2^{k-1} d(x,y)$. But then:
	\begin{equation*}
		d(x',y) \leq C_2^k d(x,y) = C_2 d(z,y),
	\end{equation*}
	
	\noindent thus $K(z,y) \leq C_1 K(x',y)$. 
	
	Then by induction, we proved that $K(x,y) \leq C_1^k K(x',y)$ for all $x, x', y$ with $d(x',y) \leq C_2^k d(x,y)$. It follows that if \eqref{K_cond} holds, then for any $C_2 > 1$ there exist a $C_1 > 1$ such that \eqref{K_cond} holds.
	
	Now we can prove the proposition proper. Take $x', y' \in B'$, $x,y \in B$ such that: 
	\begin{equation*}
		d(x',y') \geq c r(B'),\qquad d(x,y) \geq c r(B),
	\end{equation*}
	
	\noindent with $c = \frac{1}{2\rho}$. By exchanging $x'$ and $y'$ if necessary, we can suppose that $d(x,y') \geq d(x,x')$, then:
	\begin{equation*}
		cr(B') \leq d(x',y') \leq d(x', x) + d(x, y') \leq 2d(x,y').
	\end{equation*}
	
	Moreover, since $B \subset B'$, we have $d(x,y') \leq 2 r(B')$, and thus:	
	\begin{equation*}
		d(x,y') \leq \frac{2}{c} d(x',y'),
	\end{equation*}
	
	\noindent Thus by \eqref{K_cond} there is a constant $c_1 > 1$ such that $K(x',y') \leq c_1 K(x,y')$.
	
	Moreover:
	\begin{equation*}
		d(x,y) \leq d(x,y') + d(y', y) \leq d(x,y') + 2r(B') \leq (1 + 4/c) d(x,y'),
	\end{equation*}
	
	\noindent thus by \eqref{K_cond} there is a constant $c_2 > 1$ such that $K(x,y') \leq c_2 K(x,y)$. Thus:	
	\begin{equation*}
		K(x',y') \leq c_1 c_2 K(x,y),
	\end{equation*}
	
	\noindent and we have $\varphi(B') \leq c_1 c_2 \varphi(B)$.
\end{proof}

We further assume that $\varphi$ satisfies the following condition:  there is some $\varepsilon > 0$ and some constant $L > 0$ such that for any balls $B_1, B_2$, with $B_1\subset B_2$, we have:

\begin{equation}\label{condition_phi}
	\varphi(B_1)\mu(B_1) \leq L \left(\frac{r(B_1)}{r(B_2)}\right)^\varepsilon \varphi(B_2)\mu(B_2).
\end{equation}

\begin{theorem}[C. Pérez and R.L. Wheeden \cite{PerezWheeden03}]\label{th_perez_wheeden}
	Let $(X,d,\mu)$ be a metric space with a doubling measure $\mu$. Let $T$ be an operator defined by \eqref{T_def} and satisfying $\Kcond{}$, with $\varphi$ satisfying \eqref{condition_phi}. Then there is a constant $C$, depending only on the doubling constant and $p$, such that, for any measurable $f:X \goesto \R$:
	\begin{equation}\label{eq:perez_wheeden}
		\|Tf\|_p \leq C \|M_\varphi f\|_p.
	\end{equation}
\end{theorem}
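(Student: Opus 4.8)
The plan is to reduce \eqref{eq:perez_wheeden} to an estimate for a purely dyadic model operator and then to run a Calderón–Zygmund / good-$\lambda$ argument, the decisive gain in the good-$\lambda$ inequality coming precisely from hypothesis \eqref{condition_phi}.

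First I would fix a generation parameter $m$ and, for $x\in X$, decompose the integral in \eqref{T_def} over the dyadic annuli $\{2^j\le d(x,y)<2^{j+1}\}$, $j\in\Z$. On such an annulus $x$ and $y$ both lie in a ball $B$ with $r(B)\simeq 2^j$ and $d(x,y)\ge r(B)/(2\rho)$, so $K(x,y)\le\varphi(B)$ by the very definition of $\varphi$; matching $B$ to the boundedly many (by doubling) dyadic cubes of $\dyadic_m$ of comparable length containing $x$ from Theorem \ref{cubes}, and using Proposition \ref{p_phi_larger_ball} to compare the values of $\varphi$ on $B$ and on the (fixed dilates of the) containing balls $B(Q)$, one obtains the pointwise domination
\begin{equation*}
	Tf(x)\le C\,\mathcal{T}f(x),\qquad
	\mathcal{T}f(x):=\sum_{\substack{Q\in\dyadic_m\\ x\in Q}}\varphi(B(Q))\int_{B(Q)}|f|\diff\mu,
\end{equation*}
with $C$ depending only on $C_1,C_2,\rho$ and the doubling constant. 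Each summand is already $\le M_\varphi f(x)$; the whole point is to sum them in $L^p$, and this is where \eqref{condition_phi} enters.

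For $\mathcal{T}$ I would first assume $f$ bounded with compact support, so that $\|\mathcal{T}f\|_p<\infty$ (using the exponential volume bound of Proposition \ref{pr:exp doubling} together with \eqref{condition_phi} to control the tail over large cubes), and prove the good-$\lambda$ inequality
\begin{equation*}
	\mu\set*{x:\ \mathcal{T}f(x)>2\lambda,\ M_\varphi f(x)\le\delta\lambda}\le C\,\delta^{\varepsilon}\,\mu\set*{x:\ \mathcal{T}f(x)>\lambda},\qquad 0<\delta<1 .
\end{equation*}
To do so, perform a stopping-time decomposition of $\{\mathcal{T}f>\lambda\}$ into maximal dyadic cubes $\{Q_i\}$ on which the relevant dyadic averages first exceed $\lambda$ (as in the proof of Proposition \ref{p_dyad_max}); inside a fixed $Q_i$ on which $M_\varphi f\le\delta\lambda$, split $\mathcal{T}f$ into the contribution of the cubes $Q\supsetneq Q_i$, which is $\le\lambda$ and essentially constant on $Q_i$ by maximality, and a local contribution from cubes $Q\subset Q_i$; for the latter, Chebyshev together with \eqref{condition_phi} — which forces $\varphi(B(Q))\mu(B(Q))\le L(\ell(Q)/\ell(Q_i))^{\varepsilon}\varphi(B(Q_i))\mu(B(Q_i))$ and hence a geometrically summable series over the scales below $Q_i$ — bounds the bad subset of $Q_i$ by $C\delta^{\varepsilon}\mu(Q_i)$; summing over $i$ gives the inequality.

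Finally, multiply the good-$\lambda$ inequality by $p\lambda^{p-1}$ and integrate in $\lambda$; choosing $\delta$ small and using the a priori finiteness of $\|\mathcal{T}f\|_p$ to absorb a term yields $\|\mathcal{T}f\|_p\le C\|M_\varphi f\|_p$ with $C$ depending only on $p$, $\varepsilon$, $L$ and the doubling constant, and with the pointwise bound $Tf\le C\mathcal{T}f$ this is \eqref{eq:perez_wheeden} for such $f$; a monotone convergence / truncation argument then removes the restriction on $f$. I expect the main obstacle to be the good-$\lambda$ estimate itself — organizing the stopping cubes and showing that, within one of them, the smaller scales contribute to $\{\mathcal{T}f>2\lambda\}$ only a set of measure $\lesssim\delta^{\varepsilon}$ times the cube. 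This is exactly where both the doubling property and hypothesis \eqref{condition_phi} are indispensable, and keeping all constants independent of the generation $m$ requires some care.
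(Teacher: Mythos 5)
You take a genuinely different route from the one the paper follows. The paper does not reprove Theorem \ref{th_perez_wheeden} (it is quoted from \cite{PerezWheeden03}); what it proves in detail is the cutoff variant, Theorem \ref{cutoff_kernel}, by the Pérez--Wheeden duality scheme: bound $\int (T_m f)\,g\diff\mu$, dominate $K$ by $\sum_Q \varphi(B(Q))\charaset_Q(x)\charaset_{2B(Q)}(y)$, run a stopping-time decomposition on the dyadic averages of the dual function $g$ at levels $\gamma^n$, collapse the sums over subcubes with the summation lemma coming from \eqref{condition_phi}, disjointify via the sets $E^n_{j,\alpha}$, and finish with Hölder and the dyadic maximal function. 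You share the first step (pointwise domination of $T$ by the dyadic model operator $\mathcal{T}$) and the same engine (\eqref{condition_phi} making $\varphi(B(Q))\mu(B(Q))$ geometrically summable over the subcubes of a fixed cube), but you replace the duality half by a good-$\lambda$ inequality against $M_\varphi$, in the spirit of \cite{MuckenhouptWheeden74}. This buys a shorter, more transparent second half --- Chebyshev inside each stopping cube, together with the summation lemma and the presence of a point of $Q_i$ where $M_\varphi f\le\delta\lambda$, gives the factor $\delta$ directly (you do not even need the power $\delta^{\varepsilon}$) --- at the cost of needing an a priori finiteness of $\|\mathcal{T}f\|_p$ and the existence of maximal stopping cubes, which the duality proof never requires.

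Two points need repair. A minor one: $\mathcal{T}$ must be defined with $\int_{2B(Q)}|f|\diff\mu$ (or $3B(Q)$), not $\int_{B(Q)}$; if $Q\ni x$ has length comparable to $d(x,y)$ one only gets $y\in 2B(Q)$. This is harmless: since $x\in 2B(Q)$ and $\varphi(B(Q))\le C\varphi(2B(Q))$ by \eqref{condition_phi} and doubling, each summand is still controlled by $M_\varphi f(x)$, and the summation lemma produces $3B(Q_0)$ exactly as in the paper's proof of Theorem \ref{cutoff_kernel}. The more serious point is your justification of $\|\mathcal{T}f\|_p<\infty$ for bounded, compactly supported $f$: it does not work as stated, because \eqref{condition_phi} bounds $\varphi(B)\mu(B)$ from \emph{below} for large balls, and Proposition \ref{p_phi_larger_ball} only gives $\varphi(B(Q))\le\alpha\,\varphi(B_0)$ with no decay, while Proposition \ref{pr:exp doubling} controls volume growth, not $\varphi$. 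In the stated generality the large-scale tail $\sum_k\varphi(B(Q_k(x)))\,\|f\|_1$ may diverge (e.g. $K\equiv 1$ on $\R$ satisfies $\Kcond{}$ and \eqref{condition_phi}, and then $\mathcal{T}f\equiv+\infty$), and the same divergence would prevent the maximal stopping cubes from existing. The standard fix is to run your argument for the operator truncated to generations $m\le k\le N$: each scale contributes a bounded function supported in a set of finite measure, the maximal cubes exist trivially, the good-$\lambda$ constants are uniform in $m$ and $N$, and monotone convergence recovers \eqref{eq:perez_wheeden}. With that modification your proof is complete.
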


In addition, for the operator $Tf(x) = \int_M \frac{d(x,y)^s}{\Vol{x}{d(x,y)}} f(y) \diff\mu(y)$, we can replace $M_\varphi$ by the maximal function defined by $M_s f(x) = \sup_{r > 0} r^{s} \fint_{B(x,r)} |f| \diff\mu$. See corollary \ref{cor:riesz_bounded} for the justification.

This theorem is useful, but can't be applied to spaces that are only $R$-doubling. We will now prove a version that we can use in $R$-doubling spaces.

We consider the operator $T_\delta$, $\delta < R$, with kernel $K_\delta(x,y) = K(x,y) \charaset_{\set{d(x,y) < \delta}}$, and we want to compare its $L^p$ norm to that of the maximal function $M_{\varphi,\delta}$ defined by:

\begin{equation}\label{eq:def M_phi_delta}
	M_{\varphi, \delta} f(x) = \sup_{\substack{x \in B\\ r(B) < \delta}} \varphi(B) \int_B |f| \diff\mu.
\end{equation}

The idea of the proof of this comparison will be essentially the same as that of theorem \ref{th_perez_wheeden} given in \cite{PerezWheeden03}, but some care must be taken to account for the different hypotheses properly, and thus we will give the details in what follows.

The hypothesis to prove $\|Tf\|_p \leq C\|M_{\varphi, \delta}f\|_p$ can be weakened compared to those of theorem \ref{th_perez_wheeden}. A key point is that proposition \ref{p_phi_larger_ball} has to hold at least for balls of radius at most $2\delta$. Looking at the proof of the proposition, this is true as long as \eqref{K_cond} holds for $C_2 \leq (1 + 8\rho)$ and $d(x,y) \leq 4\delta$.

Then we take $(X, d, \mu)$ a R-doubling space. $T$ an operator defined by a kernel $K$. We say that $T$, or $K$ verify the condition $\Kcond{\delta}$, if there exist constants $C_1 > 1$, $C_2 \geq 1 + 8 \rho$, such that for any $x,y$ such that $d(x,y) \leq 4\delta$, we have:

\begin{equation}\label{eq:K_cond_cutoff}
\begin{aligned}
	\forall x'\in X,\, d(x', y) \leq C_2 d(x,y),\quad &K(x,y) \leq C_1 K(x', y)\\
	\forall x'\in X,\, d(x, y') \leq C_2 d(x,y),\quad &K(x,y) \leq C_1 K(x, y').
\end{aligned}
\end{equation}

Property $\Kcond{\delta}$ ensure that \ref{p_phi_larger_ball} holds for balls of radius less than $2\delta$.

Since we will end up considering balls of a radius slightly larger than $\delta$, the following proposition will be useful.

\begin{proposition}\label{pr:M_phi_p_bound}
	Let $(X,d,\mu)$ satisfies $\doubling{2(2\kappa+1)\delta}$ for $\delta > 0$, $\kappa > 1$, $T$ an operator satisfying $\Kcond{4(2\kappa +1)\delta}$, and such that the associated functional $\varphi$ satisfies \eqref{condition_phi} when $r(B_1), r(B_2) \leq 2(2\kappa +1) \delta$. Then for any $p \in (1, \infty]$, there is some constant $C$ depending only on $p$, $\kappa$, the doubling constants, and the constants $\alpha$, $L$, $\varepsilon$, in proposition \ref{p_phi_larger_ball} and in \eqref{condition_phi} such that for any non negative $f$, $\|M_{\varphi, \kappa \delta} f\|_p \leq C \|M_{\varphi, \delta} f\|_p$.
\end{proposition}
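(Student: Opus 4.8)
The plan is to reduce the statement to the $L^p$-boundedness of an ordinary (uncentered) maximal function, which is already available, by exploiting that a ball $B$ with $\delta\le r(B)<\kappa\delta$ has radius comparable to $\delta$ up to the \emph{fixed} factor $\kappa$; this makes it cheap to pass from $\varphi(B)\mu(B)$ to $\varphi(B')\mu(B')$ for a sub-ball $B'$ of radius $\delta/2$, at the price of constants depending only on $\kappa$ and the doubling constants. First I would split the supremum defining $M_{\varphi,\kappa\delta}f(x)$ (see \eqref{eq:def M_phi_delta}) according to whether the competing ball has radius $<\delta$ or radius in $[\delta,\kappa\delta)$: the first part is exactly $M_{\varphi,\delta}f(x)$, so it suffices to bound, in $L^p$, the quantity $S(x):=\sup\set*{\varphi(B)\int_B|f|\diff\mu:\ x\in B,\ \delta\le r(B)<\kappa\delta}$ by $C\,\|M_{\varphi,\delta}f\|_p$.

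Fix such a ball $B=B(y,r)$ and set $\hat B:=B(y,2r)$. I would take a maximal $\tfrac\delta2$-separated family $\set*{y_i}_{i\in I}\subset B$, so that the balls $B_i:=B(y_i,\delta/2)$ cover $B$, the $\tfrac12 B_i$ are pairwise disjoint, and (since $r/(\delta/2)<2\kappa$ and $\delta/2$ lies below the doubling scale) $\card(I)\le C(\kappa)$ by proposition \ref{p_covering_card}. Every radius occurring here ($r$, $2r$, $3r$, $\delta/2$) is $\le 2(2\kappa+1)\delta$, so $\doubling{2(2\kappa+1)\delta}$, the bound \eqref{condition_phi} (which holds for such radii), and proposition \ref{p_phi_larger_ball} (which holds for radii $<8(2\kappa+1)\delta$ thanks to $\Kcond{4(2\kappa+1)\delta}$) all apply to them. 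Since $B\cup B_i\subset\hat B$, combining \eqref{condition_phi} for $B\subset\hat B$ (where $r(B)/r(\hat B)=1/2$), then proposition \ref{p_phi_larger_ball} for $B_i\subset\hat B$, together with the doubling estimates $\mu(B_i)\le C\mu(B)$ and $\mu(\hat B)\le C(\kappa)\mu(B_i)$, yields $\varphi(B)\le C(\kappa)\varphi(B_i)$ for every $i$. Writing $g:=M_{\varphi,\delta}f$ and using that $r(B_i)<\delta$ (so $\varphi(B_i)\int_{B_i}|f|\diff\mu\le g(z)$ for every $z\in B_i$, whence $\varphi(B_i)\int_{B_i}|f|\diff\mu\le\inf_{B_i}g\le\mu(B_i)^{-1}\int_{B_i}g\diff\mu\le C(\kappa)\fint_{\hat B}g\diff\mu$), one obtains
\begin{equation*}
	\varphi(B)\int_B|f|\diff\mu\le\sum_{i\in I}\varphi(B)\int_{B_i}|f|\diff\mu\le C(\kappa)\sum_{i\in I}\varphi(B_i)\int_{B_i}|f|\diff\mu\le C(\kappa)\,\card(I)\,\fint_{\hat B}g\diff\mu\le C(\kappa)\fint_{\hat B}g\diff\mu.
\end{equation*}

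Since $x\in B\subset\hat B$ and $r(\hat B)=2r<2\kappa\delta$, taking the supremum over such $B$ gives $S(x)\le C(\kappa)\,\tilde M_{2\kappa\delta}g(x)$, the uncentered maximal function at scale $2\kappa\delta$. By proposition \ref{pr:bigger R}, $\mu$ is doubling at every scale with constants controlled by those of $\doubling{2(2\kappa+1)\delta}$ and by $\kappa$, so by proposition \ref{pr:uncentered equiv centered} and the $L^p$-boundedness of the centered maximal function, $\|\tilde M_{2\kappa\delta}g\|_p\le C(\kappa,p)\|g\|_p$, hence $\|S\|_p\le C(\kappa,p)\|M_{\varphi,\delta}f\|_p$ and the proposition follows (for $p=\infty$ the same chain gives $S(x)\le C(\kappa)\|g\|_\infty$ directly, so no maximal inequality is needed). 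I expect the main obstacle to be purely the bookkeeping: verifying that every ball that appears stays within the range of scales on which $\doubling{}$, proposition \ref{p_phi_larger_ball} and \eqref{condition_phi} are valid, and checking that all constants depend only on $p$, $\kappa$, the doubling constants and $\alpha,L,\varepsilon$ — the analytic content is entirely in the already-established boundedness of the maximal function.
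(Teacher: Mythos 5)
Your proof is correct, and it reaches the same intermediate pointwise inequality as the paper — namely that the part of $M_{\varphi,\kappa\delta}f$ coming from balls of radius in $[\delta,\kappa\delta)$ is dominated pointwise by a classical maximal function of $g=M_{\varphi,\delta}f$ at scale comparable to $\kappa\delta$, after which both arguments conclude by the $L^p$-boundedness of that maximal function. The mechanism for the key step differs, though. The paper fixes a ball $B\ni x$ of radius $\kappa\delta$, writes $\int_{B(x,2\kappa\delta)}|f|$ as an average over centers $y$, inserts $\mu(B(y,\delta))$ and performs a Fubini swap so that the inner integral becomes $\varphi(B(z,\delta))\int_{B(z,\delta)}|f|\le M_{\varphi,\delta}f(z)$, yielding $M_{\varphi,\kappa\delta}f(x)\le M_{\varphi,\delta}f(x)+C\,M_{(2\kappa+1)\delta}\bigl(M_{\varphi,\delta}f\bigr)(x)$. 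You instead cover $B$ by a family of balls of radius $\delta/2$ of bounded cardinality (proposition \ref{p_covering_card}) and use $\varphi(B)\le C(\kappa)\varphi(B_i)$ together with $\varphi(B_i)\int_{B_i}|f|\le\inf_{B_i}g\le C\fint_{\hat B}g$; this trades the averaging trick for a counting argument, and lands on the uncentered maximal function $\tilde M_{2\kappa\delta}g$, whose $L^p$-boundedness needs doubling at a slightly larger scale than the hypothesis provides — but, as you note, proposition \ref{pr:bigger R} supplies this with constants controlled by $\kappa$, so nothing is lost. The two routes are of comparable length and both keep all constants within the allowed dependencies; yours is perhaps marginally more elementary in that it avoids the Fubini manipulation, at the cost of the covering-cardinality input.
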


\begin{proof}
	We have:
	
	\begin{align*}
		M_{\varphi, \kappa \delta} f(x) &= M_{\varphi, \delta} f(x) + \sup_{\substack{x\in B,\\ \delta < r(B) \leq \kappa\delta}} \varphi(B) \int_B |f|\diff\mu \\
		&\leq M_{\varphi,\delta} f(x) + C \sup_{\substack{x\in B,\\ r(B) = \kappa\delta}} \varphi(B) \int_{B(x,2\kappa\delta)} |f|\diff\mu. 
	\end{align*}

Using that for $x \in B$, $B\subset B(x,2r(B)) \subset B(x,2\kappa\delta)$ and that for any ball $B$ with radius greater than $\delta$, by \eqref{condition_phi} (on balls with radius at most $\kappa \delta$), we have:
\begin{equation*}
	\varphi(B) \leq AL \kappa^\eta \varphi\left(\frac{\kappa\delta}{r(B)}B\right).
\end{equation*}

Now, for any ball $B$ containing $x$ with radius equal to $\kappa \delta$. Let $B_x = B(x,\delta)$. For $y \in 2\kappa B_x$, consider the ball $Q(y) = B(y,\delta)$. We have $Q(y) \subset (2\kappa + 1)B_x$, thus using $\doubling{(2\kappa +1)\delta}$, we have that: 
\begin{equation*}
	\mu(2\kappa B_x) \leq A^2(2\kappa + 1)^\eta \mu(Q(y)).
\end{equation*} 

For $y \in (2\kappa +1)B_x$, we also have that $B \subset B(z, 2(2\kappa +1)\delta)$, thus using \eqref{condition_phi} (for balls with radius at most $2(2\kappa +1)\delta)$), $\doubling{2(2\kappa +1)}$ and $\Kcond{4(2\kappa +1)\delta)}$, we get that:
\begin{equation*}
	\varphi(B) \leq A^2 \parenfrac{2(2\kappa +1)}{2\kappa}^\eta \alpha \varphi(Q(y)).
\end{equation*}

Putting all this together, we get:
\begin{align*}
	\varphi(B) \int_{B(x,2\kappa\delta)} |f|\diff\mu &= \varphi(B) \fint_{2\kappa B_x} \mu(2\kappa B_x) |f|\diff\mu \\ 
	&\leq C \varphi(B) \fint_{2\kappa B_x} \mu(Q(y)) |f(y)| \diff\mu(y) \\
	&\leq C \fint_{2\kappa B_x} \varphi(B) \int_{Q(y)} \diff\mu(z) |f(y)|\diff\mu(y) \\
	&\leq C\frac{1}{\Vol{x}{2\kappa\delta}}\int_{(2\kappa + 1)B_x} \varphi(B) \int_{2\kappa B_x \cap B(z,\delta)} |f(y)|\diff\mu(y) \diff\mu(z) \\
	&\leq C A\parenfrac{2\kappa +1}{2\kappa}^\eta \fint_{(2\kappa +1) B_x} \varphi(B(z,\delta)) \int_{B(z,\delta)} |f(y)| \diff\mu(y) \diff\mu(z) \\
	&\leq C \fint_{(2\kappa +1) B_x} M_{\varphi, \delta} f \diff\mu.
\end{align*}

And the constant $C$ depends only on the doubling constants, $L$, $\alpha$ and $\kappa$. Then we have:

\begin{equation}
	M_{\varphi,\kappa\delta} f(x) \leq M_{\varphi,\delta} f(x) + C M_{(2\kappa +1) \delta}\left(M_{\varphi, \delta}f\right)(x).
\end{equation}

The theorem follows from the boundedness of the classical maximal function $M_{(2\kappa +1)\delta}$ on any $L^p$, $p > 1$, under $\doubling{2(2\kappa +1)\delta}$.
\end{proof}

\begin{theorem}\label{cutoff_kernel}
	Let $\delta > 0$. Let $\rho > 0$ be the sidelength constant of dyadic cubes. Suppose that $(X,d, \mu)$ satisfies $\doubling{2(6\rho +1) \delta}$. Assume that $K$ satisfies $\Kcond{4(6\rho +1)\delta}$, and that $\varphi$ satisfies \eqref{condition_phi} for balls with radius at most $2(6\rho+1)\delta$. Let $p \geq 1$. Then there is a constant $C > 0$ (depending only on the doubling constants, $\rho$, $p$ and of the constants in \eqref{condition_phi}, \eqref{K_cond}) such that we have:

	\begin{equation}\label{eq p_bound}
		\int_X |T_\delta f|^p \diff\mu \leq C \int_X (M_{\varphi, \delta} f)^p \diff\mu.
	\end{equation}
\end{theorem}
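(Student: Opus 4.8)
The plan is to reproduce, with all scales cut at a fixed multiple of $\delta$, the proof of the global estimate in Theorem~\ref{th_perez_wheeden} (see \cite{PerezWheeden03}): the point is that once every cube, every ball, and every pair of points entering the argument has length, radius, resp.\ separation bounded by $2(6\rho+1)\delta$ (resp.\ $4(6\rho+1)\delta$), the localised hypotheses $\doubling{2(6\rho+1)\delta}$, $\Kcond{4(6\rho+1)\delta}$ and the restricted form of \eqref{condition_phi} do everything the global ones did. One may assume $f\geq 0$ and, after a routine truncation of $K$ away from the diagonal followed by monotone convergence, that $\|T_\delta f\|_p<\infty$; the constant produced is independent of the truncation and of the choice of dyadic system $\dyadic_m$ (we only ask that $\rho^m$ lie below every relevant scale), so it suffices to prove \eqref{eq p_bound} with a uniform constant.

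First I would discretise. Fix $\dyadic_m$ with sidelength constant $\rho$ and, for $x\in X$, split $\{y:0<d(x,y)<\delta\}$ into the annuli $A_k(x)=\{\rho^k\le d(x,y)<\rho^{k+1}\}$, only the ones with $\rho^k<\delta$ occurring. Let $\hat{Q}_k(x)$ be the cube of $\dyadic_m$ of length $\rho^k$ containing $x$; then $A_k(x)\subset 2B(\hat{Q}_k(x))$, and for $y\in A_k(x)$, producing (since $X$ is a path space) a point $v$ with $d(x,v)=\rho^k$ inside $2B(\hat{Q}_k(x))$ and applying $\Kcond{4(6\rho+1)\delta}$ — whose range $C_2\ge 1+8\rho$ is exactly what this chaining needs and whose scale covers $d(x,v)\le\delta$ — followed by the monotonicity of $\varphi$ (Proposition~\ref{p_phi_larger_ball}, valid here since the balls involved have radius $\le 2\rho\delta$) gives $K(x,y)\le C\,\varphi(B(\hat{Q}_k(x)))$ on $A_k(x)$. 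Summing over $k$ yields the pointwise bound $T_\delta f(x)\le C\,Sf(x)$, where
\begin{equation*}
	Sf(x)\;=\;\sum_{\substack{Q\in\dyadic_m,\ x\in Q\\ \ell(Q)\le\delta}}\varphi(B(Q))\int_{2B(Q)}f\,\diff\mu
\end{equation*}
is a positive dyadic operator in which every ball $B(Q)$, and every dilate $2B(Q)$, has radius $\le 2\rho\delta$.

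Next I would estimate $\|Sf\|_p$ exactly as \cite{PerezWheeden03} estimates the full dyadic operator: decompose each level set $\{Sf>\lambda\}$ into the maximal cubes of $\dyadic_m$ it contains, split $Sf$ on such a stopping cube $Q_j$ into the contribution of the cubes containing $Q_j$ (which is constant and $\lesssim\lambda$ on $Q_j$) and that of the cubes contained in $Q_j$, and run the resulting good-$\lambda$ inequality $\mu(\{Sf>2\lambda,\ M_{\varphi,3\rho\delta}f\le\gamma\lambda\})\le C\gamma^{\varepsilon'}\mu(\{Sf>\lambda\})$, whose gain $\varepsilon'>0$ is produced by the geometric decay in \eqref{condition_phi}. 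Because $S$ only sums over cubes of length $\le\delta$, every stopping cube has length $\le\delta$, and every ball used to pass from cubes to the maximal function or to invoke $R$-doubling has radius $\le 3\rho\delta<2(6\rho+1)\delta$; so the localised hypotheses suffice throughout. Integrating against $p\lambda^{p-1}\,\diff\lambda$, choosing $\gamma$ small and absorbing $\|Sf\|_p$ (finite by the truncation) gives $\|Sf\|_p\le C\|M_{\varphi,3\rho\delta}f\|_p$ for every $p\ge 1$. Finally, Proposition~\ref{pr:M_phi_p_bound} with $\kappa=3\rho$ — whose hypotheses $\doubling{2(6\rho+1)\delta}$, $\Kcond{4(6\rho+1)\delta}$ and \eqref{condition_phi} up to radius $2(6\rho+1)\delta$ are precisely those of the present theorem — gives $\|M_{\varphi,3\rho\delta}f\|_p\le C\|M_{\varphi,\delta}f\|_p$. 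Chaining the three estimates proves \eqref{eq p_bound}.

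The main obstacle is the third step: carrying the Pérez--Wheeden good-$\lambda$/stopping-time scheme over verbatim while certifying that it never touches a ball of radius exceeding $2(6\rho+1)\delta$ nor two points at distance exceeding $4(6\rho+1)\delta$ — this is the ``care'' announced before the statement, and it is what pins down the numerical thresholds (the extra factor $\rho$ incurred when a cube is enlarged to its containing ball $B(Q)$ is exactly what turns $\kappa=3\rho$, hence $6\rho+1$, into the right constants once one feeds the outcome into Proposition~\ref{pr:M_phi_p_bound}). A subsidiary technical point is that dyadic cubes do not contain the balls $2B(Q)$ of comparable scale, so passing between the dyadic sum $S$ and the ball-defined maximal function $M_{\varphi,3\rho\delta}$ relies on the standard doubling comparison between cube- and ball-maximal functions.
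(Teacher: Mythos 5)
Your proposal is correct, and its first and last steps coincide with the paper's: the pointwise domination of $K_\delta(x,y)$ by $C\,\varphi(B(Q))\charaset_Q(x)\charaset_{2B(Q)}(y)$ over dyadic cubes of length $<\delta$ (the paper gets this directly from the definition of $\varphi$ plus Proposition~\ref{p_phi_larger_ball}, without your auxiliary chaining point $v$, but both work), and the final reduction from $M_{\varphi,3\rho\delta}$ to $M_{\varphi,\delta}$ via Proposition~\ref{pr:M_phi_p_bound} with $\kappa=3\rho$. Where you genuinely diverge is the central estimate $\|Sf\|_p\leq C\|M_{\varphi,3\rho\delta}f\|_p$: the paper does not run a good-$\lambda$ inequality on the level sets of $Sf$, but instead bounds the bilinear form $\int (T_m f)\,g\,\diff\mu$ by a stopping-time (principal cubes) construction on the level sets of $Q\mapsto\frac{1}{\mu(B(Q))}\int_Q g\,\diff\mu$, applies the summing lemma (Lemma~6.1 of \cite{PerezWheeden03}) on the stopping cubes, uses H\"older and a disjointification $E^n_{j,\alpha}=Q^n_{j,\alpha}\setminus\Omega^{n+1}_\alpha$ with $\mu(Q^n_{j,\alpha})\leq 2\mu(E^n_{j,\alpha})$, and concludes by duality together with the $L^{p'}$ boundedness of the dyadic maximal function (Proposition~\ref{p_dyad_max}). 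Both routes hinge on the same summing lemma and stay within the prescribed radii, so the numerology $3\rho\delta\to 2(6\rho+1)\delta$ is identical; your good-$\lambda$ route avoids duality (and would in fact give all $0<p<\infty$), while the paper's bilinear route avoids the a priori finiteness of $\|Sf\|_p$ needed to absorb in the good-$\lambda$ scheme. On that last point you are slightly too quick: truncating $K$ away from the diagonal does not by itself make $\|T_\delta f\|_p$ finite, so you should also reduce to $f$ bounded with compact support (for which the truncated dyadic sum is a finite sum of bounded compactly supported terms) before absorbing, and note that maximal stopping cubes exist because only the finitely many generations $m\leq k\leq r$ enter $S$; also the gain in your good-$\lambda$ inequality is the linear $C\gamma$ coming from the local weak-$(1,1)$ bound plus the summing lemma, not a $\gamma^{\varepsilon'}$ produced by \eqref{condition_phi}. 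These are routine repairs and do not affect the validity of the argument.
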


\begin{proof}
	We will show that there exist some constant $C > 0$ such that for any non negative function $f$, we have $\int_X |T_\delta f|^p \diff\mu \leq C \int_X (M_{\varphi, 3\rho\delta} f)^p \diff\mu$. Then the theorem will follows by proposition \ref{pr:M_phi_p_bound}.
	
	To prove this, we define, for any $m\in \Z$, the operator $T_m$ by:
	
	\begin{equation*}
		T_mf(x) = \int_{d(x,y) > \rho^m} K_\delta (x,y) f(y) \diff\mu(y).
	\end{equation*}
	
	Then, if for any $m \in \Z$, and for any non negative measurable functions $f,g$, we have:
	
	\begin{equation}\label{eq p_dual_bound}
		\int_X T_m f g\diff\mu = \int_{d(x,y) > \rho^m} K_\delta (x,y) f(y) g(x) \diff\mu(x,y) \leq C\|M_{\varphi, 3\delta} f\|_p \|g\|_{p'}.
	\end{equation}
	
	Then by the monotone convergence theorem, taking $m\goesto -\infty$, the same inequality holds but with $T_m$ replaced by $T$, and by duality, \eqref{eq p_bound} is true.
		
	Take $m \in \Z$, and let $f, g$ be non negative measurable functions. Let $\collection{D}_m = \set*{\mathcal{E}_\alpha^k}_{\alpha \in \N^*}^{k \geq m}$ be a decomposition of $X$ in dyadic cubes given by theorem \ref{cubes} with sidelengths $\rho^k$. If $(x,y) \in X$ are such that $d(x,y) > \rho^m$, we take the integer $l \geq m$ such that:
	\begin{equation*}
		\rho^l < d(x,y) \leq \rho^{l+1}.
	\end{equation*}
	
	Let $Q$ be the cube of length $\rho^l$ containing $x$, $B(Q) = B\left(c_Q, \rho^{l+1}\right)$ the containing ball. We recall that $\rho^{-1} B(Q) \subset Q \subset B(Q)$.
	
	We have:
	\begin{equation*}
		d(c_Q, y) \leq d(c_Q, x) + d(x,y) \leq 2\rho^{l+1},
	\end{equation*}
	\noindent thus $y \in 2B(Q)$. Since $d(x,y) > \rho^l = \frac{1}{2\rho} r\left(2B(Q)\right)$, we have by definition of $\varphi$, and by proposition \ref{p_phi_larger_ball}:
	\begin{equation*}
		K(x,y) \leq \varphi(2B(Q)) \leq \alpha\varphi(B(Q))
	\end{equation*}
	
	To apply proposition \ref{p_phi_larger_ball} we need $\Kcond{4\rho\delta}$. 
	
	If we suppose that $\delta \leq \rho^l = \ell(Q)$, then $d(x,y) \geq \delta$ and $K_\delta(x,y) = 0$. 
	
	We have proved that if $Q$ is the cube of length comparable with $d(x,y)$, containing $x$, we have $y \in 2B(Q)$ and:	
	\begin{equation*}
		K_\delta(x,y) \leq C\varphi(B(Q))\charaset_{\set*{R \in \collection{D}_m,\, \ell(R) < \delta}}(Q)\charaset_{Q}(x) \charaset_{2B(Q)}(y).
	\end{equation*}
	
	If $r$ is the largest integer such that $\rho^r < \delta$, define $\collection{D}_m^r = \set*{\mathcal{E}_\alpha^k;\; m \leq k \leq r}$. For any $x, y \in X$ with $d(x,y) > \rho^m$, there is at least one cube $Q\in \collection{D}_m$ such that the previous inequation holds, and since both sides of it are zero if $\ell(Q) \geq \delta$, we have, for any $x,y \in X$:
	
	\begin{equation*}
		K_\delta(x,y) \leq \sum_{Q \in \collection{D}_m^r} C \varphi(B(Q))\charaset_{Q}(x) \charaset_{2B(Q)}(y).
	\end{equation*}
	
	And so, for any $f, g \geq 0$:
	
	\begin{equation*}
		\int_X T_mf g \diff\mu\leq C\sum_{Q \in \collection{D}_m^r} \varphi(B(Q)) \int_{2B(Q)} f \diff\mu \int_Q g \diff\mu.
	\end{equation*}
	
	But for any fixed integer $k \geq m$, the cubes of length of length $\rho^k$, $\set*{\mathcal{E}_\alpha^k}$ are pairwise disjoints, and $X = \bigcup_\alpha \mathcal{E}_\alpha^k$. Then using this decomposition for $k = r$,
	
	\begin{equation*}
		\int_X T_mf g \diff\mu\leq 
		C \sum_{\alpha \geq 1} \sum_{\substack{Q \in \collection{D}_m^r\\ Q \subset \mathcal{E}_\alpha^r}} \varphi(B(Q)) \int_{2B(Q)} f \diff\mu \int_Q g \diff\mu.
	\end{equation*}
	
	Then for a constant $\gamma \geq 1$ to be determined, for any $\alpha \geq 1$, and $n \in \Z$, define:
	
	\begin{equation}
		\collection{C}_\alpha^n = \set*{Q \in \collection{D}_m^r, Q \subset \mathcal{E}_\alpha^r;\; \gamma^n < \frac{1}{\mu(B(Q))}\int_Q g \diff \mu \leq \gamma^{n+1}}.
	\end{equation}
	
	We let $n_\alpha$ be the unique integer such that $\mathcal{E}_\alpha^r \in \collection{C}_\alpha^{n_\alpha}$. Notice that $\set*{\collection{C}_\alpha^{n}}_{n\in \Z}$ is a partition of $\set{Q \in \collection{D}_m^r;\, Q \subset \mathcal{E}_\alpha^r}$. Then we have:
	
	\begin{equation*}
		\int_X T_mf g \diff\mu \leq C \sum_{\alpha \geq 1} \sum_{n\in \Z} \gamma^{n+1} \sum_{Q \in \collection{C}_\alpha^n} \varphi(B(Q)) \mu(B(Q)) \int_{2B(Q)} f \diff\mu .
	\end{equation*}
	
	For any $\alpha \geq 1$, we let $\set*{Q_{j,\alpha}^n}_{j\in J_n}$, for some index set $J_n$, be the collection of the maximal dyadic cubes subset of $\mathcal{E}_{\alpha}^r$ such that:
	\begin{equation*}
		\gamma^n < \frac{1}{\mu\left(B\left(Q_{j,\alpha}^n\right)\right)} \int_{Q_{j,\alpha}^n} g \diff \mu.
	\end{equation*}
	
	If $n \leq n_\alpha$, then there is exactly one such maximal cube: $\mathcal{E}_\alpha^r$. Also, the function $(n,Q) \mapsto Q$ is an injection from the set of the couples $(n, Q)$ with $n \leq n_\alpha$, $Q \in \collection{C}_\alpha^n$ to $\set*{Q\in D_m^r: Q \subset \mathcal{E}_\alpha^r}$, thus:	
	\begin{small}
	\begin{equation*}
		\sum_{n \leq n_\alpha} \sum_{Q\in \collection{C}_\alpha^n} \gamma^{n+1}  \varphi(B(Q))\mu(B(Q)) \int_{2B(Q)} f \diff\mu \\
		\leq \gamma^{n_\alpha + 1} \sum_{\substack{Q\in \collection{D}_m^r\\ Q \subset \mathcal{E}_\alpha^r}} \varphi(B(Q))\mu(B(Q)) \int_{2B(Q)} f \diff \mu.
	\end{equation*}
	\end{small}
	
	If $n > n_\alpha$, then any $Q_{j,\alpha}^n$ is a strict subset of $\mathcal{E}_\alpha^r$. For such a maximal cube $\mathcal{F}$, we let $P$ be his dyadic parent i.e.\ the only cube of length $\rho \ell(\mathcal{F})$ containing $P$. We have $P \subset \mathcal{E}_\alpha^r$, and by using the maximality of $\mathcal{F}$, and that $B(\mathcal{F}) \subset 2B(P)$, and using the $\rho \delta$-doubling ($B(P)$ has radius less than $\rho \delta$):
	\begin{equation}\label{eq_max_cubes_gamma}
		\gamma^n < \frac{1}{\mu(B(\mathcal{F}))} \int_\mathcal{F} g \diff \mu \leq \frac{\mu(B(P))}{\mu(B(\mathcal{F}))} \frac{1}{\mu(B(P))}\int_P g \diff \mu \leq C \rho^\eta \gamma^n = \kappa \gamma^n,
	\end{equation}
	
	\noindent with the constant $\kappa$ depending only on $\rho$ and on the doubling constant. Then choosing $\gamma > \kappa$, we have:
	\begin{equation*}
		\frac{1}{\mu(B(\mathcal{F}))} \int_\mathcal{F} g \diff \mu \leq \gamma^{n+1},
	\end{equation*}
	
	\noindent thus $\mathcal{F} \in \collection{C}_\alpha^n$. Thus for a fixed $n > n_\alpha$, every cube in $\collection{C}_\alpha^n$ is in a (unique) $Q_{j,\alpha}^n$, which are disjoint in $j$ by maximality. Thus, writing $Q_{j,\alpha}^{n_\alpha}$ for $\mathcal{E}_\alpha^r$ we have:
	
	\begin{equation*}
		\int_X \left(T_mf\right) g \diff\mu \leq C \sum_{\alpha \geq 1} \sum_{n \geq n_\alpha} \gamma^{n+1} \sum_{j \in J_n}  \sum_{\substack{Q \in \collection{D}_\alpha^m\\Q\subset Q_{j,\alpha}^n}} \varphi(B(Q)) \mu(B(Q)) \int_{2B(Q)} f \diff\mu.
	\end{equation*}
	
	Now we use the following lemma (see lemma $6.1$ of \cite{PerezWheeden03}):
	
	\begin{lemma}
		Let $(X,d,\mu)$ satisfies $\doubling{\delta}$. Let $\varphi$ be a functional on balls that satisfies \eqref{condition_phi} for balls of radius at most $\rho \delta$. Then there is a constant $C$ depending only on the constant $L$ of \eqref{condition_phi} and on the  doubling constant such that for any $f \geq 0$ and any dyadic cube $Q_0 \in \collection{D}_m^r$, with $\rho^r \leq \delta$, 
		
		\begin{equation}
			\sum_{\substack{Q\in \collection{D}_m \\ Q \subset Q_0}} \varphi(B(Q))\mu(B(Q)) \int_{2B(Q))} f \diff \mu \leq C \varphi(B(Q_0))\mu(B(Q_0)) \int_{3B(Q_0)} f \diff \mu.
		\end{equation}
	\end{lemma}
	
	\begin{proof}
		By \eqref{condition_phi}, we have:
		
		\begin{small}
		\begin{align}
			\sum_{\substack{Q\in \collection{D}_m \\ Q \subset Q_0}} \varphi(B(Q))\mu(B(Q)) \int_{2B(Q))} f \diff \mu 
			&\leq L \varphi(B(Q_0))\mu(B(Q_0)) \sum_{\substack{Q\in \collection{D}_m \\ Q \subset Q_0}} \left(\frac{\ell(Q)}{\ell(Q_0)}\right)^\varepsilon \int_{2B(Q))} f \diff \mu \nonumber \\
			&\leq L \varphi(B(Q_0))\mu(B(Q_0)) \sum_{l= 0}^{+\infty} \rho^{-\varepsilon l}  \sum_{\substack{Q\in \collection{D}_m \\ Q \subset Q_0 \\ \ell(Q) = \rho^{-l}\ell(Q_0)}} \int_{2B(Q))} f \diff \label{eq:lemma eq 1}\mu.
		\end{align}
		\end{small}
		
		Then for $Q \in \collection{D}_m, Q \subset Q_0$, and $\ell(Q) \leq \ell(Q_0)$ we have $2B(Q) \subset 3B(Q_0)$. Indeed, if $y \in 2B(Q)$, then:
		
		\begin{align*}
			d(y,x_{Q_0})
			&\leq d(y, x_{Q}) + d(x_Q, x_{Q_0}) \\
			&\leq 2 r(B(Q)) + r(B(Q_0)) \\
			&\leq 3 r(B(Q_0)).
		\end{align*}
		
		Thus, the left hand side of \eqref{eq:lemma eq 1} is less than:
		
		\begin{equation*}
		L \varphi(B(Q_0))\mu(B(Q_0)) \int_{3B(Q_0))}  f(x)  \sum_{l=0}^\infty \rho^{-\varepsilon l} \sum_{\substack{Q\in \collection{D}_m \\ Q \subset Q_0 \\ \ell(Q) = \rho^{-l}\ell(Q_0)}}
		 \charaset_{2B(Q)}(x) \diff \mu(x)		.
		\end{equation*}				
		
		Then it suffices to show that for each $l$, any $x$ of $3B(Q_0)$ is in at most $N$ of the $2B(Q)$, with $\ell(Q) = \rho^{-l}\ell(Q_0)$, with $N$ independant of the choices of $x$ and $Q_0$. For $l = 0$, there is only one $Q$: $Q_0$ itself, and thus it is true.
		
		Now fix $l > 1$, let $x\in M$, and $Q$ be a cube of sidelength $\rho^{-l}\ell(Q_0)$ such that $x\in 2B(Q)$. We write $\ell = \ell(Q) \leq \rho^{-1}\delta$. Then for $y \in Q$: 
		\begin{equation*}
			d(x, y) \leq d(x, x_Q) + d(y, x_Q) \leq 3\rho \ell\leq 3\delta,
		\end{equation*}
		
		\noindent then we have $B(x_Q, \ell) \subset Q \subset B(x, 3\rho\ell)$. By the proposition \ref{p_covering_card}, then there can be at most $N$ disjoint balls of radius $\ell \leq \delta$ with center in a ball of radius $3\rho\ell$, with the constant $N$ depending only on $\rho$ and on the $\delta$-doubling constant.
				
		Thus:		
		\begin{equation*}
			\sum_{l=0}^\infty \rho^{-\varepsilon l} \sum_{\substack{Q\in \collection{D}_m \\ Q \subset Q_0 \\ \ell(Q) = \rho^{-l}\ell(Q_0)}} 1 \leq N \frac{1}{1- \rho^{-\varepsilon}},
		\end{equation*}
		
		\noindent and the lemma follows.		
	\end{proof}
	
	Then applying the lemma:
	\begin{equation*}
		\int_X \left(T_mf\right) g \diff\mu \leq C \sum_{\alpha \geq 1} \sum_{n \geq n_\alpha} \gamma^{n+1} \sum_{j \in J_n} 
		\varphi\left(B\left(Q_{j,\alpha}^n\right)\right)
		\mu\left(B\left( Q_{j,\alpha}^n \right) \right) 
		\int_{3B\left( Q_{j,\alpha}^n \right)} f \diff \mu.
	\end{equation*}
	
	And thus since $Q_{j,\alpha}^n \in \collection{C}_\alpha^n$, $\gamma^n \leq \frac{1}{\mu\left(B(Q_{j,\alpha}^n)\right)} \int_{Q_{j,n}^\alpha} g \diff\mu$, and so,
	
	\begin{equation*}
		\int_X \left(T_mf\right) g \diff\mu \leq C \gamma \sum_{\alpha \geq 1} \sum_{n \geq n_\alpha} \sum_{j \in J_n} \varphi\left(B(\left(Q_{j,\alpha}^n\right)\right) 
		\int_{3B\left( Q_{j,\alpha}^n \right)} f \diff \mu \int_{Q_{j,\alpha}^n} g \diff \mu,
	\end{equation*}
	
	and we have:
	
	\begin{equation}
		\int_X \left(T_mf\right) g \diff \mu \leq c \sum_{\alpha, n, j} \varphi\left(B\left(Q_{j,\alpha}^n\right)\right)
		\mu\left(Q_{j,\alpha}^n\right) \int_{3B(Q_{j,\alpha}^n)} f \diff \mu \frac{1}{\mu(Q_{j\alpha}^n)} \int_{Q_{j,\alpha}^n} g \diff \mu.
	\end{equation}
	
	Then using Hölder's inequality, and that by \eqref{condition_phi} there is some constant c depending only on $\alpha, A, L, \varepsilon$ such that $ \varphi(B) \leq c\varphi(3B)$ (ball of radius $3\rho \delta$), we get:
	
	\begin{samepage}
	\begin{multline*}
		\int_X \left(T_mf\right) g \diff \mu \leq  
		C\left(\sum_{\alpha, n, j}\mu\left(Q_{j,\alpha}^n \right)
		\left( \varphi\left(B\left(3Q_{j,\alpha}^n\right)\right) \int_{3B(Q_{j,\alpha}^n)}  f \diff \mu
		\right)^p		
		\right)^\frac{1}{p}  \\
		\left(\sum_{\alpha, n, j}\mu\left(Q_{j,\alpha}^n\right) 
		\left( \frac{1}{\mu(Q_{j\alpha}^n)} \int_{Q_{j,\alpha}^n} g \diff \mu
		\right)^{p'} 
		\right)^\frac{1}{p'}.
	\end{multline*}
	\end{samepage}
	
	Now we just need to establish a majoration of $\mu(Q_{j,\alpha}^n)$ by a constant time the measure of a set $E_{j,\alpha}^n$, with the $E_{j,\alpha}^n$ being pairwise disjoint in $j, n, \alpha$. For this, define $\Omega_\alpha^n$ by 
	
	\begin{equation}
		\Omega_\alpha^n = \set*{ x \in \mathcal{E}_\alpha^r;\; \sup_{\substack{ Q \in \collection{D}_m^r\\ x \in Q}} \frac{1}{\mu(B(Q))} \int_Q g \diff \mu > \gamma^n} = \bigcup_{j\in J_n} Q_{j,\alpha}^n,
	\end{equation}
	
	and define the set $E_{j,\alpha}^n = Q_{j,\alpha}^n \setminus \Omega_\alpha^{n+1}$. We have that $E_{j,\alpha}^n \subset \Omega_\alpha^n \setminus \Omega_\alpha^{n+1}$, and the $E_{j,\alpha}^n$ are pairwise disjoints in $j, n, \alpha$.
	
	Now we want to show that for $\gamma$ chosen large enough, $\mu(Q_{j,\alpha}^n) \leq 2\mu(E_{j,\alpha}^n)$.
	
	First: 
	\begin{equation*}
		Q_{j,\alpha}^n \cap \Omega_{\alpha}^{n+1} = \bigcup_i  \left(Q_{j,\alpha}^n \cap Q_{i,\alpha}^{n+1}\right),
	\end{equation*}
	\noindent but we have:
	\begin{equation*}
		\frac{1}{\mu\left(B\left( Q_{i,\alpha}^{n+1} \right)\right)} \int_{Q_{i,\alpha}^{n+1}} g \diff \mu > \gamma^{n+1} > \gamma^n,
	\end{equation*} 
	\noindent thus by maximality of $Q_{j,\alpha}^{n}$, and by the properties of dyadic cubes, etiher $Q_{i,\alpha}^{n+1} \subset Q_{j,\alpha}^n$ or $Q_{j,\alpha}^n \cap Q_{i,\alpha}^{n+1} = \emptyset$. Hence:	
	\begin{equation*}
		\mu\left( Q_{j,\alpha}^n \cap \Omega_\alpha^{n+1} \right) = \sum_{i: Q_{j,\alpha}^n \cap Q_{i,\alpha}^{n+1} = \emptyset} \mu \left( Q_{j,\alpha}^n \cap Q_{i,\alpha}^{n+1}\right) = \sum_{i: Q_{i,\alpha}^{n+1} \subset Q_{j,\alpha}^n} \mu\left(Q_{i,\alpha}^{n+1}\right),
	\end{equation*}
	
	\noindent but:	
	\begin{equation*}
		\mu\left(Q_{i,\alpha}^{n+1}\right) \leq \mu\left(B\left(Q_{i,\alpha}^{n+1}\right)\right) \leq \gamma^{-n - 1} \int_{Q_{i,\alpha}^{n+1}} g \diff \mu,
	\end{equation*}
	
	\noindent and since the $Q_{i,\alpha}^{n+1}$ considered are disjoints and subsets of $Q_{j,\alpha}^n$, we have:
	\begin{equation*}
		\mu(Q_{j,\alpha}^n \cap \Omega_\alpha^{n+1}) \leq \gamma^{-n-1} \int_{Q_{j,\alpha}^n} g \diff \mu \leq \kappa \gamma^{-1} \mu(B(Q_{j,\alpha}^n)),
	\end{equation*}
	
	\noindent where $\kappa$ is the constant in \eqref{eq_max_cubes_gamma}. But we have:	
	\begin{equation*}
		\mu(Q_{j,\alpha}^n) = \mu(E_{j,\alpha}^n) + \mu(Q_{j,\alpha}^n\cap \Omega_{\alpha}^{n+1}),
	\end{equation*}
	
	\noindent and so choosing $\gamma = 2\kappa$, it follows that:	
	\begin{equation*}
		\mu\left(Q_{j,\alpha}^n\right) \leq \frac{\gamma}{\gamma - \kappa} \mu\left(E_{j,\alpha}^n \right) = 2\mu\left(E_{j,\alpha}^n \right).
	\end{equation*}
	
	Consequently, we have:	
	\begin{samepage}
	\begin{multline*}
		\int_X \left(T_mf\right) g \diff \mu \leq 
		2C\left(\sum_{\alpha, n, j}\mu\left(E_{j,\alpha}^n \right)
		\left( \varphi\left(B\left(3Q_{j,\alpha}^n\right)\right) \int_{3B(Q_{j,\alpha}^n)}  f \diff \mu
		\right)^p		
		\right)^\frac{1}{p}\\
		\left(\sum_{\alpha, n, j}\mu\left(E_{j,\alpha}^n\right) 
		\left( \frac{1}{\mu(Q_{j\alpha}^n)} \int_{Q_{j,\alpha}^n} g \diff \mu
		\right)^{p'} 
		\right)^\frac{1}{p'},
	\end{multline*}
\end{samepage}	
	
	\noindent but since $E_{j,\alpha}^n \subset Q_{j,\alpha}^n$, it follows that:	
	\begin{equation*}
		\mu\left(E_{j,\alpha}^n \right) \left( \varphi\left(B\left(3Q_{j,\alpha}^n\right)\right) \fint_{3B\left(Q_{j,\alpha}^n\right)} f \diff \mu \right)^p
	\leq \int_{E_{j,\alpha}^n} \left(M_{\varphi, 3\rho^{r+1}} f\right)^p \diff\mu,
	\end{equation*} 
	
	\noindent and a similar inequality for the integral on $g$. In addition using that the $E_{j,\alpha}^n$ are pairwise disjoint, and that $\rho^r < \delta$, we get:
	\begin{equation}
		\int_{X} \left(T_mf\right) g \diff \mu \leq 2C\left( \int_{X} \left(M_{\varphi, 3\rho\delta } f\right)^p \diff\mu\right)^\frac{1}{p} \left( \int_X \left( M_{d, \delta} g \right)^{p'} \diff\mu \right)^\frac{1}{p'}.
	\end{equation}
	
	Now, using proposition \ref{p_dyad_max}, for all $f, g \geq 0$, there is a constant $C$ depending only on $p, A, \alpha, \varepsilon$ (specifically it depends on the constants for the $\rho \delta$-doubling) such that:
	\begin{equation*}
		\int_{X} \left(T_mf\right) g \diff \mu \leq C \left\|M_{\varphi, 3\rho\delta} f \right\|_p \|g\|_{p'}.
	\end{equation*}
	
	This holds under $\doubling{r\delta}$, $\Kcond{2\rho\delta}$ and the fact that \eqref{condition_phi} holds for balls of radius at most $3\rho\delta$. The stronger hypotheses are what we need to apply proposition \ref{pr:M_phi_p_bound} which gives us:
	\begin{equation}
		\int_{X} \left(T_mf\right) g \diff \mu \leq C \left\|M_{\varphi, \delta} f \right\|_p \|g\|_{p'},
	\end{equation}
	
	\noindent which proves the theorem.
\end{proof}

	Finally we have the theorem applied to the operators which will be of interest to us:
	\begin{corollary}\label{cor:riesz_bounded}
		Let $\mu$ be a measure satisfying $\doubling{R}$ and $\rd{R}$, with $R > 0$, $\eta \geq \nu > 0$ ($\eta \geq \nu$ is automatic). Let $s \leq \nu$. Let $\delta \leq R$. If $K(x,y) = \frac{d(x,y)^s}{\Vol{x}{d(x,y)}}$, then the associated operator $T_\delta$ satisfies the hypotheses of theorem \ref{cutoff_kernel}. Moreover, the theorem still holds with $M_{\varphi, \delta}f$ replaced by the following maximal function:
		\begin{equation}
			M_{s, \delta} f(x) = \sup_{0 < r < \delta} r^{s}\fint_{B(x,r)} |f|\diff\mu .
		\end{equation}
	\end{corollary}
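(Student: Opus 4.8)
The plan is to verify directly the three hypotheses of Theorem \ref{cutoff_kernel} for the kernel $K(x,y)=d(x,y)^s\,\mu(B(x,d(x,y)))^{-1}$ — namely $\Kcond{4(6\rho+1)\delta}$ for $K$, and condition \eqref{condition_phi} for the associated functional $\varphi$ on balls of radius at most $2(6\rho+1)\delta$ — and then to show that $M_{\varphi,\delta}f$ and $M_{s,\delta}f$ have comparable $L^p$-norms, so that the conclusion of the theorem may be restated with $M_{s,\delta}$. Throughout, every ball that occurs has radius at most a fixed multiple of $\delta\le R$, so by Propositions \ref{pr:bigger R} and \ref{pr:reverse_doubling bigger R} both $\doubling{}$ and $\rd{}$ (with the same reverse doubling order $\nu$) are available at all the scales we invoke, at the cost of constants depending only on the doubling and reverse doubling constants and on $\rho$. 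The linchpin of everything is the pointwise equivalence $\varphi(B(z,r))\asymp r^s\,\mu(B(z,r))^{-1}$ for $r$ in the relevant range.

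For the $(\mathbf{K})$ condition, write $K(x,y)=h_x(d(x,y))$ with $h_x(t):=t^s\mu(B(x,t))^{-1}$. Reverse doubling together with $s\le\nu$ gives that each $h_x$ is quasi-decreasing, $h_x(t_2)\le a^{-1}h_x(t_1)$ for $t_1\le t_2$, while $\doubling{}$ gives the complementary estimate $h_x(t_1)\le A(t_2/t_1)^{\eta-s}h_x(t_2)$ (recall $\eta\ge\nu\ge s$, with $\eta\ge\nu$ automatic). Hence $d(x,y')\le C_2 d(x,y)$ implies $h_x(d(x,y))\le C_1 h_x(d(x,y'))$, which is the second line of \eqref{eq:K_cond_cutoff}. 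For the first line, $d(x',y)\le C_2 d(x,y)$ forces $d(x,x')\le(1+C_2)d(x,y)$, so $B(x',d(x,y))\subset B(x,(2+C_2)d(x,y))$, and $\doubling{}$ gives $\mu(B(x',d(x,y)))\le C\mu(B(x,d(x,y)))$, i.e.\ $K(x,y)\le C h_{x'}(d(x,y))$; applying the quasi-monotonicity of $h_{x'}$ to $d(x',y)\le C_2 d(x,y)$ then yields $K(x,y)\le C_1 K(x',y)$. Choosing $C_2=1+8\rho$ and tracking constants gives $\Kcond{4(6\rho+1)\delta}$.

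For \eqref{condition_phi}, fix $B=B(z,r)$. If $x,y\in B$ with $d(x,y)\ge\frac1{2\rho}r$, then $\frac1{2\rho}r\le d(x,y)<2r$ and $B(z,r)\subset B(x,2r)\subset B(x,4\rho\,d(x,y))$, so by doubling $\mu(B(x,d(x,y)))\ge c\,\mu(B(z,r))$ and hence $K(x,y)\le C r^s\mu(B(z,r))^{-1}$; taking the supremum, $\varphi(B)\le C r^s\mu(B(z,r))^{-1}$. For the reverse inequality we use that $(X,d)$ is a length space: provided $B(z,r)\neq B(z,\frac1{2\rho}r)$ — which holds whenever $X$ has a point at distance $\ge\frac1{2\rho}r$ from $z$, in particular always when $X$ is non-compact — there is a point $w$ with $\frac1{2\rho}r\le d(z,w)<r$, and then $(z,w)$ is admissible with $K(z,w)\ge c\,r^s\mu(B(z,r))^{-1}$ by doubling. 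Thus $\varphi(B(z,r))\mu(B(z,r))\asymp r^s$, so for $B_1\subset B_2$ with $r(B_1)\le r(B_2)\le 2(6\rho+1)\delta$,
\begin{equation*}
	\varphi(B_1)\mu(B_1)\asymp r(B_1)^s=\left(\frac{r(B_1)}{r(B_2)}\right)^{s}r(B_2)^s\le \left(\frac{r(B_1)}{r(B_2)}\right)^{s} C\,\varphi(B_2)\mu(B_2),
\end{equation*}
i.e.\ \eqref{condition_phi} holds with $\varepsilon=s$ (this is where $s>0$ is used; this is the case of interest, e.g.\ $s=1$ for the Riesz-type kernel).

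Finally, the equivalence $\varphi(B)\asymp r(B)^s\mu(B)^{-1}$ shows that for every $t>0$, $M_{\varphi,t}f\asymp\widetilde M_{s,t}f$ pointwise, where $\widetilde M_{s,t}$ is the uncentered analogue of $M_{s,t}$. Since a ball $B\ni x$ of radius $<t/2$ lies in the centered ball $B(x,2r(B))$ of radius $<t$, doubling gives $r(B)^s\fint_B|f|\diff\mu\le C (2r(B))^s\fint_{B(x,2r(B))}|f|\diff\mu$, so $\widetilde M_{s,t/2}f\le C M_{s,t}f$; combining this with Proposition \ref{pr:M_phi_p_bound} applied with base $\delta/2$ and $\kappa=2$ yields
\begin{equation*}
	\|M_{\varphi,\delta}f\|_p\le C\|M_{\varphi,\delta/2}f\|_p\le C\|\widetilde M_{s,\delta/2}f\|_p\le C\|M_{s,\delta}f\|_p,
\end{equation*}
while $M_{s,\delta}f\le\widetilde M_{s,\delta}f\asymp M_{\varphi,\delta}f$ gives the opposite estimate. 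Plugging $\|M_{\varphi,\delta}f\|_p\asymp\|M_{s,\delta}f\|_p$ into Theorem \ref{cutoff_kernel} gives the asserted bound with $M_{s,\delta}$ in place of $M_{\varphi,\delta}$. I expect the main obstacle to be the lower bound in $\varphi(B(z,r))\asymp r^s\mu(B(z,r))^{-1}$ — the one place where the geometry (the length-space structure, and excluding degenerate balls) genuinely enters — together with the bookkeeping needed to check that each use of $\doubling{}$, $\rd{}$, Proposition \ref{pr:M_phi_p_bound} and Theorem \ref{cutoff_kernel} is made at an admissible scale.
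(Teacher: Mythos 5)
Your proposal is correct and follows essentially the same route as the paper: verify $\Kcond{}$ from doubling, reverse doubling and $s\le\nu$; establish $\varphi(B)\asymp r(B)^s\mu(B)^{-1}$ to get \eqref{condition_phi} with $\varepsilon=s$; then pass from $M_{\varphi,\delta}$ to $M_{s,\delta}$ via the centered/uncentered comparison. If anything you are more careful than the paper on two points it leaves implicit — the lower bound $\varphi(B)\ge c\,r(B)^s\mu(B)^{-1}$ requiring the length-space structure and a non-degenerate ball, and the scale bookkeeping (via Proposition \ref{pr:M_phi_p_bound}) when returning from the uncentered maximal function at scale $2\delta$ to the centered one at scale $\delta$.
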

	
	\begin{proof}
		First, take some $b > 1$, by proposition \ref{pr:reverse_doubling bigger R}, $\mu$ is $bR$-reverse doubling of order $\nu$. Then, we must verify that $K$ satisfies the hypotheses of theorem \ref{cutoff_kernel}. Let $d(x,y) \leq R$ and $d(x, y') \leq b d(x,y)$, then we have by doubling and reverse doubling,:
		
		\begin{align*}
			\frac{1}{\Vol{x}{d(x,y)}} &\leq \frac{1}{\Vol{x}{d(x,y')}} \frac{\Vol{x}{b d(x,y)}}{\Vol{x}{d(x,y)}} \frac{\Vol{x}{d(x,y')}}{\Vol{x}{b d(x,y)}}, \\
			&\leq C b^{\eta-\nu} \left( \frac{d(x,y')}{d(x,y)}\right)^\nu \frac{1}{\Vol{x}{d(x,y'}}.
		\end{align*}
		
		Thus, provided that $s \leq \nu$:
		
		\begin{equation*}
			K(x,y) \leq C b^{\eta - \nu} \left(\frac{d(x,y')}{d(x,y)}\right)^{\nu -s} K(x,y') \leq C b^{\eta - s} K(x,y').
		\end{equation*}
		
		Furthermore, if $d(x',y) \leq \alpha d(x,y)$, using the doubling property, there are $c, C$ such that $c \Vol{y}{d(x',y)} \leq \Vol{x'}{d(x',y)} \leq C \Vol{y}{d(x',y)}$, and so doing the same calcuations we have:
		
		\begin{equation*}
			K(x,y) \leq C b^{\nu - s} K(x',y).
		\end{equation*}
		
		And there are $C_1, C_2 > 1$ such that \eqref{K_cond} is satisfied.
			
		Then, using the definition of $\varphi$ and doubling:
		\begin{equation*}
			c \frac{r(B)^s}{\mu(B)} \leq \varphi(B) \leq C \frac{r(B)^s}{\mu(B)},
		\end{equation*}
		
		\noindent for some constants that depends only on $s, \rho$ and the doubling constant. Then since we have, for $B_1 \subset B_2$: 
		\begin{equation*}
			r(B_1)^s \leq 2^s r(B_2)^s,
		\end{equation*}
		\noindent we easily verify that $\varphi$ satisfies \eqref{condition_phi} with $\varepsilon = s$.
				
		Then it is enough to prove that the centered and uncentered version of the maximal function $M_{s,\delta}$ are equivalent in $L^p$ norms. This follow from the same argument as that of proposition \ref{pr:uncentered equiv centered}. 
	\end{proof}

\section{Relative Faber-Krahn inequality and estimates on the heat kernel and the Riesz and Bessels potentials}\label{sec:Faber Krahn}

\subsection{Faber-Krahn and doubling}

	The results from this subsection are due to A.A. Grigor'yan \cite{Grigoryan94, Grigor'yanBook09}, or are slight adaptation of his results to the R-doubling case.
	
\begin{theorem}\cite{Grigor'yanBook09}
	Let $(M,g,\mu)$ be a weighted manifold, and let $\set*{B(x_i, r_i)}_{i\in I}$ be a family of relatively comapct balls in M, where $I$ is an arbitrary index set. Assume that, for any $i\in I$, $U \subset B(x_i,r_i)$, there is a constant $a_i > 0$ such that the following Faber-Krahn inequality holds:
	\begin{equation}
		\lambda_1(U) \geq a_i \mu(U)^{-2/\eta}.
	\end{equation}
	
	Let $\Omega = \bigcup_{i\in I} B\left(x_i, \frac{r_i}{2}\right)$. Then for all $x,y \in \Omega$ and $t \geq t_0 > 0$, we have:
	\begin{equation}
		p_t(x,y) \leq \frac{C(\eta) \left(1 + \frac{d(x,y)^2}{t}\right)^{\eta/2} \exp\left(-\frac{d(x,y)^2}{4t} - \lambda_1(M) (t - t_0)\right)}{\left(a_i a_j \min\left(t_0, r_i^2\right) \min\left(t_0, r_j^2\right)\right)^{\eta/4}},
	\end{equation}
	
	\noindent where $i, j$ are the indices such that $x\in B\left(x_i, \frac{r_i}{2}\right)$ and $y \in B\left(x_j, \frac{r_j}{2}\right)$.
\end{theorem}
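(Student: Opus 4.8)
The plan is to follow Grigor'yan's scheme linking Faber--Krahn inequalities to heat kernel bounds, run locally on each ball of the family and then assembled; no completeness or stochastic completeness is needed, the only global input being the spectral identity $\|e^{-t\Delta}\|_{2\to 2}=e^{-t\lambda_1(M)}$, valid for the Friedrichs Laplacian.

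\textbf{On-diagonal estimates.} Fix $i$ and write $B_i=B(x_i,r_i)$. By the well-known equivalence between Faber--Krahn, Nash-type and on-diagonal heat kernel inequalities (see Grigor'yan's book \cite{Grigor'yanBook09, Grigoryan94}), the hypothesis $\lambda_1(U)\geq a_i\mu(U)^{-2/\eta}$ for all open $U\subset B_i$ is the same as the Nash inequality $\|f\|_2^{2+4/\eta}\leq C(\eta)a_i^{-1}\|\nabla f\|_2^{2}\|f\|_1^{4/\eta}$ for $f\in\SmoothComp{B_i}$; applying it to $f=p_s^{B_i}(x,\cdot)$ and integrating the differential inequality for $s\mapsto\|p_s^{B_i}(x,\cdot)\|_2^{2}=p_{2s}^{B_i}(x,x)$ yields $p_t^{B_i}(x,x)\leq C(\eta)(a_it)^{-\eta/2}$. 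To pass to the \emph{global} heat kernel on the concentric half-ball, take a cutoff $\phi\equiv1$ on $B(x_i,r_i/2)$ supported in $B_i$ and run the Nash inequality on $\phi\,p_s(x,\cdot)$: the extra term produced by $\nabla\phi$ is of order $r_i^{-2}p_{2s}(x,x)$ and is reabsorbed by a bootstrap using the monotonicity of $s\mapsto p_s(x,x)$ and the restriction $s\lesssim r_i^{2}$. This gives, for $x\in B(x_i,r_i/2)$,
\begin{equation*}
	p_t(x,x)\leq\frac{C(\eta)}{\bigl(a_i\min(t,r_i^{2})\bigr)^{\eta/2}},
\end{equation*}
the truncation at $r_i^{2}$ reflecting that the hypothesis controls only scales $\lesssim r_i$.

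\textbf{Off-diagonal Gaussian decay and the spectral factor.} Fix $x\in B(x_i,r_i/2)$, $y\in B(x_j,r_j/2)$, $t\geq t_0$, and set $\tau_i=\tfrac12\min(t_0,r_i^{2})$, $\tau_j=\tfrac12\min(t_0,r_j^{2})$, so $\tau_i+\tau_j\leq t_0\leq t$. Starting from $p_t(x,y)=\bigl\langle p_{\tau_i}(x,\cdot),\,e^{-(t-\tau_i-\tau_j)\Delta}p_{\tau_j}(y,\cdot)\bigr\rangle$, one replaces the crude bound $\|e^{-s\Delta}\|_{2\to2}=e^{-\lambda_1(M)s}$ by Grigor'yan's integrated maximum principle for a weight adapted to $d(\cdot,y)$ --- which preserves the Gaussian factor $e^{-d(x,y)^{2}/4t}$ at the cost of only $(1+d(x,y)^{2}/t)^{\eta/2}$ after optimising --- while still exploiting the spectral gap on the part of the time interval not spent on the weight. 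Feeding in $\|p_{\tau_i}(x,\cdot)\|_2^{2}=p_{2\tau_i}(x,x)\leq C(a_i\min(t_0,r_i^{2}))^{-\eta/2}$ and its analogue at $y$ from the previous step, one obtains the stated inequality: the factor $(a_ia_j\min(t_0,r_i^{2})\min(t_0,r_j^{2}))^{-\eta/4}$ is the Cauchy--Schwarz product of the two $L^2$ norms, and $e^{-\lambda_1(M)(t-t_0)}$ comes from $t-\tau_i-\tau_j\geq t-t_0$.

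The hard part is carrying out these two reductions \emph{jointly}: descending from the Dirichlet heat kernel on $B_i$ --- the only object the hypothesis directly controls --- to the global heat kernel on the concentric half-ball with a constant of precisely the form $(a_i\min(t,r_i^{2}))^{-\eta/2}$, and then interleaving the integrated maximum principle with the spectral gap so that $e^{-d^{2}/4t}$, the polynomial correction $(1+d^{2}/t)^{\eta/2}$ and the decay $e^{-\lambda_1(M)(t-t_0)}$ all survive simultaneously. It is exactly this combination that forces the \emph{frozen} time $\min(t_0,r_i^{2})$, rather than $t$, into the final bound, and it is the part of Grigor'yan's argument demanding the most care.
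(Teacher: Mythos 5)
The paper does not prove this theorem: it is quoted directly from Grigor'yan's book \cite{Grigor'yanBook09}, so there is no in-paper argument to compare against. Your outline correctly reproduces the structure of the proof in the cited source (Faber--Krahn $\Leftrightarrow$ Nash $\Rightarrow$ on-diagonal bounds localized at scale $\min(t,r_i^2)$; then the semigroup splitting at times $\tau_i+\tau_j\le t_0$, the integrated maximum principle, and the spectral bound $\|e^{-s\Delta}\|_{2\to 2}=e^{-\lambda_1(M)s}$ assembled into the off-diagonal estimate), although, as you yourself flag, the two genuinely technical steps --- the bootstrap from the Dirichlet heat kernel on $B_i$ to the global one on the half-ball, and the simultaneous retention of the Gaussian factor, the polynomial correction and the spectral decay --- are described rather than carried out, so this remains a sketch of Grigor'yan's argument rather than a self-contained proof.
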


On a manifold which admits $\FK{R}$, applying this theorem with the family of all balls of radius less than $R$, $\set*{B(x,r)}_{\substack{x\in M,\\ 0 < r \leq R}}$, with $a_{x,r} = \frac{b}{r^2} \Vol{x}{r}^{2/\eta}$, $t_0 = t$, and $r = \sqrt{t}$, when $t \leq R^2$ we get:

\begin{align*}
	p_t(x,y) &\leq C(\eta) \frac{  \left(1 + \frac{d(x,y)^2}{t}\right)^{\eta/2} e^{-\frac{d(x,y)^2}{4t}}}{\left(a_{x,\sqrt{t}}b_{y,\sqrt{t}} t^2 \right)^{\eta/4}}, \\
	&\leq \frac{C(\eta)}{b^{\eta/2}} \frac{ e^{-\frac{d(x,y)^2}{ct}}}{\Vol{x}{\sqrt{t}}^{1/2} \Vol{y}{\sqrt{t}}^{1/2}}.
\end{align*}

If $t > R^2$, then we do the same thing, but with $r = R$, and we obtain the following:
\begin{theorem}\label{th:heat_kernel_estimate}
	Let $(M,g,\mu)$ be a weighted Riemannian manifold, suppose that there is $R > 0$ such that $M$ satisfies $\FK{R}$. Then $\mu$ satisfies $\doubling{R
}$, and for any $c > 4$ there is some constant $K > 0$ such that the heat kernel has the following upper bounds:
	\begin{align}
		p_t(x,y) &\leq \frac{K}{\Vol{x}{\sqrt{t}}^{1/2}\Vol{y}{\sqrt{t}}^{1/2}} e^{-\frac{d(x,y)^2}{ct}},\quad t \leq R^2 \\
		p_t(x,y) &\leq \frac{K}{\Vol{x}{R}^{1/2}\Vol{y}{R}^{1/2}} e^{-\frac{d(x,y)^2}{ct}},\quad t > R^2.
	\end{align}
	
	The constant $K$ depends only on $b$ and $\eta$ in the Faber-Krahn inequality and on the $c > 4$ chosen.
\end{theorem}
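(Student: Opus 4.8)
The statement bundles two conclusions: the $R$-doubling of $\mu$ and the two Gaussian bounds. The first is not new: $\FK{R}$ implies $\doubling{R}$ by Grigor'yan's results recalled in this subsection (\cite{Grigoryan94, Grigor'yanBook09}), the relative Faber-Krahn inequality being equivalent to the conjunction of volume doubling with the on-diagonal estimate $p_t(x,x)\leq C/\Vol{x}{\sqrt t}$ (for $t\leq R^2$); I would simply invoke this. The genuine work is the heat kernel bound, and the plan is to feed $\FK{R}$ into the theorem of Grigor'yan stated just above, applied to the family of \emph{all} balls $\set{B(x,r)}$ with $0<r\leq R$ and weights $a_{x,r}=\frac{b}{r^2}\Vol{x}{r}^{2/\eta}$ — which is precisely what $\FK{R}$ furnishes, after exhausting $B(x,r)$ by relatively compact open subsets to remove the relative-compactness restriction on the competitor sets. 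For this family $\Omega=M$.

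Fix $t>0$ and two points $x,y$. If $t\leq R^2$, apply the theorem with $t_0=t$ and with the indices of the balls $B(x,\sqrt t)$ and $B(y,\sqrt t)$, both admissible since $\sqrt t\leq R$ and $x\in B(x,\sqrt t/2)$, $y\in B(y,\sqrt t/2)$ trivially. Then the factor $\lambda_1(M)(t-t_0)$ disappears, $\min(t_0,r_i^2)=\min(t_0,r_j^2)=t$, and
\begin{equation*}
	\left(a_{x,\sqrt t}\,a_{y,\sqrt t}\, t^2\right)^{\eta/4}=\left(b^2\,\Vol{x}{\sqrt t}^{2/\eta}\Vol{y}{\sqrt t}^{2/\eta}\right)^{\eta/4}=b^{\eta/2}\,\Vol{x}{\sqrt t}^{1/2}\Vol{y}{\sqrt t}^{1/2},
\end{equation*}
while $\left(1+\tfrac{d(x,y)^2}{t}\right)^{\eta/2}e^{-d(x,y)^2/4t}\leq C_{\eta,c}\,e^{-d(x,y)^2/ct}$ for every $c>4$ (because $(1+s)^{\eta/2}\leq C_{\eta,c}\,e^{(\frac14-\frac1c)s}$ for $s\geq 0$). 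This gives the first bound, with $C$ depending only on $b$, $\eta$, $c$. If $t>R^2$, I would run the identical argument but with the balls $B(x,R)$, $B(y,R)$, since now $\sqrt t$ exceeds $R$: then $\min(t_0,R^2)=R^2$, the geometric factor collapses to $b^{\eta/2}\Vol{x}{R}^{1/2}\Vol{y}{R}^{1/2}$, and the same polynomial-into-Gaussian absorption yields the second bound.

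Once Grigor'yan's theorem is in hand the rest is bookkeeping; the two points needing care are (i) choosing, at each scale $t$, the covering radius $\min(\sqrt t,R)$ \emph{together with} $t_0=t$, so that the product $a_ia_j\min(t_0,r_i^2)\min(t_0,r_j^2)$ collapses exactly to $b^{\eta/2}\Vol{x}{\sqrt t}^{1/2}\Vol{y}{\sqrt t}^{1/2}$ (or its analogue with $R$) — no doubling is needed here precisely because we take each ball centred at the point itself; and (ii) absorbing $(1+d(x,y)^2/t)^{\eta/2}$ into a Gaussian of arbitrary exponent $1/c<1/4$, which is exactly what forces the restriction $c>4$. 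The only real obstacle is thus the doubling assertion, which I would not reprove but attribute to Grigor'yan.
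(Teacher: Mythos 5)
Your proof is correct and follows essentially the same route as the paper: both apply Grigor'yan's theorem to the family of all balls of radius at most $R$ with $a_{x,r}=\frac{b}{r^2}\Vol{x}{r}^{2/\eta}$ and $t_0=t$, choosing the radius $\sqrt{t}$ for $t\leq R^2$ and $R$ for $t>R^2$, and then absorbing the polynomial factor $\left(1+\frac{d(x,y)^2}{t}\right)^{\eta/2}$ into the Gaussian to pass from exponent $\frac14$ to $\frac1c$ with $c>4$. The paper likewise attributes the $R$-doubling (and the estimate itself, in its general form) to Grigor'yan's Theorem 5.2 and Proposition 5.2 rather than reproving it.
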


The estimate on the heat kernel follows from Theorem 5.2 of \cite{Grigoryan94}. The R-doubling follow from the proof of Proposition 5.2 of the same article.	
	
Conversely, we have:

\begin{proposition}\cite{Grigoryan94} \label{pr:heat + doubling implies FKR}
	Let $(M, g, \mu)$ be a complete, weighted Riemannian manifold. If $\mu$ satisfies $\doubling{R}$, if for any $x\in M$, the annuli $B(x,r') \setminus B(x,r)$, for $0 \leq r < r' \leq R$ are non-empty, and if there is some constant $B$ such the heat kernel satisfies:
	\begin{equation}\label{eq:diagonal estimate}
		p_t(x,x) \leq \frac{B}{\Vol{x}{\sqrt{t}}},
	\end{equation}
	\noindent for all $x \in M$, and for all $0 < t \leq R^2$, then there is some constant $\kappa \in (0, 1)$, depending only on the doubling and reverse doubling constants, such that $M$ admits a relative Faber-Krahn inequality at scale $\kappa R$, with $\eta$ being the doubling order and $b$ depending only on $A, B$, and $\kappa$ depends only on the doubling constants and on $B$.
\end{proposition}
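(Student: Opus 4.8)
The plan is to derive the relative Faber--Krahn inequality directly from the diagonal bound on $p_t$, via the classical trace estimate, using volume doubling and reverse doubling to compare the volumes of balls of different radii and centres.

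I would first collect the ingredients. The hypothesis that $X\setminus B(x,\tfrac34R)\neq\emptyset$ for every $x$, together with $\doubling{R}$, yields a reverse doubling inequality $\rd{R/2}$ (this is the unlabelled proposition asserting that $\doubling{R}$ and $X\setminus B(x,\tfrac34R)\neq\emptyset$ imply $\rd{R/2}$, whose order $\nu$ and constant depend only on the doubling data). I would also use the standard spectral facts for a relatively compact open $U$: the Dirichlet Laplacian $\Delta_U$ on $U$ has discrete spectrum, $e^{-t\Delta_U}$ is trace class with kernel $p^U_t$ supported in $U\times U$, domain monotonicity $p^U_t\leq p_t$ holds, and
\[
	e^{-t\lambda_1(U)}\leq \operatorname{Tr}\bigl(e^{-t\Delta_U}\bigr)=\int_U p^U_t(x,x)\diff\mu(x)\leq \mu(U)\sup_{x\in U}p_t(x,x),
\]
completeness being what ensures $e^{-t\Delta_\mu}$ behaves as expected here.

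Then I would fix a ball $B(x_0,r)$ with $r\leq\kappa R$, where $\kappa\in(0,1)$ is pinned down at the end, an open $U$ relatively compact in it, and set $q=\Vol{x_0}{r}/\mu(U)\geq1$. Combining the display with \eqref{eq:diagonal estimate} gives, for every $t\leq R^2$,
\[
	e^{-t\lambda_1(U)}\leq B\,\frac{\mu(U)}{\inf_{x\in U}\Vol{x}{\sqrt t}}.
\]
The next step is to bound $\inf_{x\in U}\Vol{x}{\sqrt t}$ from below in terms of $\Vol{x_0}{r}$ for $x\in B(x_0,r)$: when $\sqrt t\leq 2r$ one uses $B(x_0,r)\subset B(x,2r)$ and $\doubling{R}$; when $2r\leq\sqrt t\leq R$ one uses $B(x_0,\sqrt t/2)\subset B(x,\sqrt t)$ and $\rd{R/2}$. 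This produces, with constants $c_1,c_2>0$ depending only on the doubling and reverse doubling constants,
\[
	e^{-t\lambda_1(U)}\leq \frac{c_1B}{q}\parenfrac{r}{\sqrt t}^{\eta}\ \ (\sqrt t\leq 2r),\qquad
	e^{-t\lambda_1(U)}\leq \frac{c_2B}{q}\parenfrac{r}{\sqrt t}^{\nu}\ \ (2r\leq\sqrt t\leq R).
\]
Finally I would optimise in $t$ according to the size of $q$. If $q\geq c_1Be$, take $\sqrt t=r(c_1Be/q)^{1/\eta}\leq r$ (so $t\leq R^2$); the first bound becomes $e^{-1}$, whence $\lambda_1(U)\geq t^{-1}=r^{-2}\bigl(q/(c_1Be)\bigr)^{2/\eta}$. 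If $1\leq q<c_1Be$, take instead $\sqrt t=r(c_2Be/q)^{1/\nu}\geq r$ (enlarging $c_2$ so that $c_2\geq c_1$); the constraint $\sqrt t\leq R$ then asks for $\kappa\leq(c_2Be)^{-1/\nu}$, the second bound becomes $e^{-1}$, and since $q^{2/\eta}$ is bounded in this regime we again get $\lambda_1(U)\geq b\,r^{-2}q^{2/\eta}$ for a suitable $b$. Collecting the two cases proves $\FK{\kappa R}$ with doubling order $\eta$ and with $b$, $\kappa$ depending only on $A$ and $B$.

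I expect the bounded-$q$ regime to be the delicate point. There the inequality $\operatorname{Tr}(e^{-t\Delta_U})\geq e^{-t\lambda_1(U)}$ carries information only when $t$ is much larger than $r^2$, so one is forced to run the estimate up to times of order $R^2$; this is precisely where reverse doubling, hence the hypothesis that $B(x,\tfrac34R)$ is not all of $X$, enters. The remainder is bookkeeping: checking that all the scale restrictions invoked along the way (the ranges in which $\doubling{R}$ and $\rd{R/2}$ apply, and $t\leq R^2$) are simultaneously met for a single value of $\kappa$ depending only on the admissible constants.
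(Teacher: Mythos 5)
Your proof is correct and is essentially the argument this proposition rests on: the paper does not prove it but defers to Grigor'yan, whose proof is precisely the trace estimate $e^{-t\lambda_1(U)}\leq \mathrm{Tr}\left(e^{-t\Delta_U}\right)\leq \mu(U)\sup_{x\in U}p_t(x,x)$ combined with the diagonal bound, doubling and reverse doubling to pass from $\Vol{x}{\sqrt t}$ to $\Vol{x_0}{r}$, and optimisation in $t$ over the two regimes of $q$. The only point to tighten is that in the small-$q$ regime your choice of $t$ must satisfy $\sqrt t\geq 2r$ (not merely $\sqrt t\geq r$) for the reverse-doubling bound to apply, which forces $c_2\geq 2^\nu c_1$ rather than just $c_2\geq c_1$ --- but this is within the bookkeeping you already flagged and does not affect the conclusion.
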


\begin{proof}
	This is a modification of the proof in \cite{Grigoryan94}, to take into account the $R$ doubling case.
	
	Fix a ball $B(x,r)$, with $r < R$, and let $U$ be an open relatively compact subset of $B(x,r)$. Using the doubling volume property, we have, if $t \leq r^2$:
	\begin{equation*}
		e^{-\lambda_1(U)t} \leq \int_U p_t(y,y) \diff\mu(y) \leq B\int_U \frac{\diff\mu(y)}{\Vol{y}{\sqrt{t}}} \leq AB \frac{\mu(U)}{\Vol{x}{r}} \left( \frac{r}{\sqrt{t}}\right)^\eta,
	\end{equation*}
	
	\noindent thus we have:		
	\begin{equation*}
		\lambda_1(U) \geq \frac{1}{t} \log\left(\frac{1}{AB} \frac{\Vol{x}{r}}{\mu(U)} \left(\frac{\sqrt{t}}{r}\right)^\eta \right).
	\end{equation*}
	
	Choose $t$ such that the logarithm in the above inequality is equal to $1$, i.e.\
	\begin{equation*}
		t = r^2 \left(e AB \frac{\mu(U)}{\Vol{x}{r}} \right)^{2/\eta},
	\end{equation*}
	
	\noindent the condition $t \leq r^2$ then impose $\mu(U) \leq \frac{1}{e AB} \Vol{x}{r}$. For such $U$, we then have:
	\begin{equation}\label{eq:lambda_1_mu}
		\lambda_1(U) \geq \frac{\left(e AB\right)^{-2/\eta}}{r^2} \left( \frac{\Vol{x}{r}}{\mu(U)} \right)^{2/\eta}.
	\end{equation}
	
	Now since the measure $\mu$ satisfies $\doubling{R}$, and since the annuli of radius less than $R$ are non empty, it satisfies $\rd{R}$ for some $\nu > 0$. There is some constant $a \in (0,1)$ such that for any $0 < r < r' \leq R$ we have:
	\begin{equation*}
		\Vol{x}{r} \leq a \left(\frac{r}{r'}\right)^\nu \Vol{x}{r'},
	\end{equation*}
	\noindent with $\nu = -\log_2 a$.
	
	Then for $\kappa = (aeAB)^{-1/\nu}$, if $r \leq \kappa R$, choose $r' = \kappa^{-1} r$. We have for all $U$ relatively compact open subset of $B(x,r)$:
	\begin{equation*}
		\mu(U) \leq \frac{1}{e AB} \mu(B(x,r')),
	\end{equation*}
	
	\noindent thus we can apply \eqref{eq:lambda_1_mu}. Then using by R-reverse doubling $\Vol{x}{r'} \geq a^{-1} \kappa^{-\nu} \Vol{x}{r}$, we have:
	
	\begin{equation*}
		\lambda_1(U) \geq \frac{b}{r^2}\left(\frac{\Vol{x}{r}}{\mu(U)}\right)^{2/\eta}.
	\end{equation*}
	
	With $b = \kappa^2$.
\end{proof}

\subsection{An estimate on the heat kernel}

\begin{proposition}\label{pr:estimate on the heat kernel}
	Let $(M,g,\mu)$ be a complete weighted manifold satisfying $\FK{R}$ for $R > 0$, then for any $c > 4$, there are constants $K_1, K_2, K_3 > 0$ and $\alpha > 0$ such that the following estimates on the heat kernel hold:
	
	If $0 \leq t \leq R^2$, then:
	\begin{equation}
		p_t(x,y) \leq \frac{K_1}{\Vol{x}{\sqrt{t}}}e^{-\frac{d(x,y)^2}{ct}}.
	\end{equation}
	
	If $t > R^2$ and $d(x,y) \leq R$, then:
	\begin{equation}
		p_t(x,y) \leq \frac{K_2}{\Vol{x}{R}}e^{-\frac{d(x,y)^2}{ct}}.
	\end{equation}
	
	If $t > R^2$ and $d(x,y) > R$, then:
	\begin{equation}
		p_t(x,y) \leq \frac{K_3}{\Vol{x}{R}}e^{\alpha \frac{t}{R^2}}e^{-\frac{d(x,y)^2}{ct}}.
	\end{equation}
\end{proposition}

\begin{proof}
	Using the $R$-doubling, we have that for any $t > 0$,

	\begin{equation}
		\Vol{x}{\sqrt{t}} \leq C \Vol{y}{\sqrt{t}} e^{D \frac{d(x,y)}{\sqrt{t}}},
	\end{equation}
	
	\noindent and so, for $t \leq R^2$, we get, for any $c' > 4$:
	\begin{equation}
		p_t(x,y) \leq \frac{CK}{\Vol{x}{\sqrt{t}}} e^{\frac{D}{2}\frac{d(x,y)}{\sqrt{t}} - \frac{d(x,y)^2}{c't}},
	\end{equation}
	
	\noindent so taking $c' < c$, there's some constant $K_1$ such that:
	\begin{equation}
		p_t(x,y) \leq \frac{K_1}{\Vol{x}{\sqrt{t}}} e^{-\frac{d(x,y)^2}{ct}}.
	\end{equation}
	
	When $t > R^2$ and $d(x,y) \leq R$, then the $R$-doubling property for small balls immediately lead to the desired result.
	
	When $d(x,y) > R$, then by the $R$-doubling we obtain, for any $c' > 4$:
	
	\begin{equation*}
	p_t(x,y) \leq \frac{CK}{\Vol{x}{R}}e^{\frac{D}{2}\frac{d(x,y)}{R} - \frac{d(x,y)^2}{c't}}.
	\end{equation*}

	We have that $\frac{D d(x,y)}{2R} - \frac{d(x,y)^2}{c't} \leq \frac{c' c D^2 t}{16 R^2 (c - c')} - \frac{d(x,y)^2}{ct}$, thus there is some constants $K_3$, $\alpha$ which depend on the doubling constant and the choice of $c, c'$, such that:
	
	\begin{equation}
		p_t(x,y) \leq \frac{K_3}{\Vol{x}{R}} e^{\alpha \frac{t}{R^2} - \frac{d(x,y)^2}{ct}}
	\end{equation}
\end{proof}

\subsection{Estimation of the Riesz potential}
	Let $s > 0$, and define the Riesz potential to be the operator $I_s = \Delta^{-s/2}$ on $L^{2}(M,\mu)$. Define $i_s(x,y)$ by:
	
	\begin{equation}\label{eq:Riesz Kernel}
		i_s(x,y) = \frac{1}{\Gamma\left(\frac{s}{2}\right)}\int_0^{+\infty} t^{s/2 - 1} p_t(x,y)dt.
	\end{equation}
	
	Whenever $i_s$ is finite for all $x,y\in M$, it is the Schwartz Kernel of the Riesz potential: in such case, for any $f\in \SmoothComp{M}$, $f$ is in the domain of $I_s$ and:
	\begin{equation}
		I_sf(x) = \int_M i_s(x,y) f(y) d\mu(y),
	\end{equation}
	
	\noindent we thus call $i_s$ the Riesz kernel. A sufficient condition for the Riesz Kernel to be defined is given in the following proposition, which also yields an estimate on it:
	
	\begin{proposition}\label{pr:riesz_potential}
		Let $(M,g,\mu)$ be a manifold satisfying $\FK{}$ and $\rd{}$, $\nu > 0$. Then for any $s < \nu$, there is a constant $C$ depending only on the Faber-Krahn and reverse doubling constants, such that the following inequality holds:
		\begin{equation}
			i_s(x,y) \leq C\frac{d(x,y)^s}{\Vol{x}{d(x,y)}}
		\end{equation}
	\end{proposition}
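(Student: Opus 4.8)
The plan is to estimate the defining time integral
\[
  i_s(x,y) = \frac{1}{\Gamma(s/2)}\int_0^\infty t^{s/2-1} p_t(x,y)\diff t
\]
directly, using the Gaussian upper bound for the heat kernel. Since $\FK{}$ holds at every scale, for each $t>0$ we may apply Theorem~\ref{th:heat_kernel_estimate} with $R=\sqrt t$ (its constant being independent of $R$), so that the global estimate
\[
  p_t(x,y) \leq \frac{C}{\Vol{x}{\sqrt t}^{1/2}\Vol{y}{\sqrt t}^{1/2}}\, e^{-d(x,y)^2/(ct)}
\]
holds for all $t>0$, with $C$ depending only on $b,\eta,c$; moreover $\mu$ is doubling at every scale. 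Write $r=d(x,y)$ and split the integral at $t=r^2$.

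For the range $0<t\leq r^2$: since $\sqrt t\leq r$, doubling at the common center $x$ gives $\Vol{x}{\sqrt t}\geq c(\sqrt t/r)^\eta\Vol{x}{r}$ and likewise for $y$; combined with $\Vol{y}{r}\simeq\Vol{x}{r}$ (Proposition~\ref{pr:doubling different centers}, using $B(y,r)\subset B(x,2r)$), this yields $\Vol{x}{\sqrt t}^{1/2}\Vol{y}{\sqrt t}^{1/2}\geq c(\sqrt t/r)^\eta\Vol{x}{r}$. Hence
\[
  \int_0^{r^2} t^{s/2-1} p_t(x,y)\diff t \leq \frac{C r^\eta}{\Vol{x}{r}}\int_0^{r^2} t^{(s-\eta)/2-1} e^{-r^2/(ct)}\diff t ,
\]
and the substitution $u=r^2/t$ turns the last integral into $r^{s-\eta}\int_1^\infty u^{(\eta-s)/2-1}e^{-u/c}\diff u$, which is finite for any $s$ thanks to the exponential factor. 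This bounds the small-time part by a constant times $r^s/\Vol{x}{r}$.

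For the range $t\geq r^2$: bound $e^{-r^2/(ct)}\leq 1$, and use the reverse doubling property $\rd{}$ (again with $\Vol{y}{r}\simeq\Vol{x}{r}$) to obtain $\Vol{x}{\sqrt t}^{1/2}\Vol{y}{\sqrt t}^{1/2}\geq c(\sqrt t/r)^\nu\Vol{x}{r}$ when $\sqrt t\geq r$. Then
\[
  \int_{r^2}^\infty t^{s/2-1} p_t(x,y)\diff t \leq \frac{C r^\nu}{\Vol{x}{r}}\int_{r^2}^\infty t^{(s-\nu)/2-1}\diff t = \frac{C' r^s}{\Vol{x}{r}},
\]
the integral converging at infinity precisely because $s<\nu$. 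Adding the two pieces and dividing by $\Gamma(s/2)$ gives $i_s(x,y)\leq C\,d(x,y)^s/\Vol{x}{d(x,y)}$ with $C$ depending only on the Faber--Krahn and reverse doubling constants (and on $s$). The computation is routine; the one point to watch is the dichotomy in the use of the volume hypotheses — doubling in the small-time regime, where the Gaussian is what makes the $t$-integral converge at $0$, and reverse doubling in the large-time regime, where the assumption $s<\nu$ is exactly what is needed for convergence at $\infty$ — together with checking that in each regime all the volume factors can be replaced by $\Vol{x}{d(x,y)}$ at the cost of constants independent of $x$ and $y$.
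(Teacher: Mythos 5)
Your proof is correct and follows essentially the same route as the paper: split the time integral at $t=d(x,y)^2$, use doubling (together with the Gaussian factor and a change of variables) on the small-time piece and reverse doubling (where $s<\nu$ gives convergence at infinity) on the large-time piece, reducing everything to $d(x,y)^s/\Vol{x}{d(x,y)}$. The only cosmetic difference is that you work with the symmetric bound $\Vol{x}{\sqrt t}^{1/2}\Vol{y}{\sqrt t}^{1/2}$ and compare centers via Proposition~\ref{pr:doubling different centers}, whereas the paper first passes to the one-center bound $C\,e^{-d^2/(ct)}/\Vol{x}{\sqrt t}$; both are legitimate.
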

	
\begin{proof}
	Using proposition \ref{pr:estimate on the heat kernel} when the manifold satisfies $\FK{}$, there is $C > 0$ such that for all $x,y \in M$, $t > 0$:
	\begin{equation}
		p_t(x,y) \leq \frac{K_1}{\Vol{x}{\sqrt{t}}}\exp\left(-\frac{d(x,y)^2}{5t}\right),
	\end{equation}
	
	\noindent and so:
	\begin{equation}
		i_s(x,y) \leq C_s \int_0^\infty \frac{t^{s/2-1}}{\Vol{x}{\sqrt{t}}}e^{-\frac{d(x,y)^2}{5t}} dt.
	\end{equation}
	
	Using the doubling and reverse property, with $\eta$ the doubling order and $\nu$ the reverse doubling order, we obtain, writing $d = d(x,y)$:
	\begin{equation}
		\begin{cases}
			\frac{1}{\Vol{x}{\sqrt{t}}} \leq c\frac{1}{\Vol{x}{d}} \parenfrac{d}{\sqrt{t}}^\eta & 0 < t \leq d^2 \\
			\frac{1}{\Vol{x}{\sqrt{t}}} \leq c\frac{1}{\Vol{x}{d}} \parenfrac{d}{\sqrt{t}}^\nu & t > d^2
		\end{cases},
	\end{equation}
	
	\noindent and so:
	
	\begin{equation}
		i_s(x,y) \leq C \frac{1}{\Vol{x}{d}} 
		\left( 
			d^\eta \int_0^{d^2} t^{\frac{s - \eta}{2} - 1}e^{-\frac{d^2}{5t}}dt + 
			d^\nu \int_{d^2}^\infty	 t^{\frac{s - \nu}{2} - 1}e^{-\frac{d^2}{5t}} dt
		\right).
	\end{equation}
	
	Then provided $\nu > s$, we have:
	
	\begin{equation}
		\int_{d^2}^\infty	 t^{\frac{s - \nu}{2} - 1}e^{-\frac{d^2}{ct}} dt \leq \int_{d^2}^\infty t^{\frac{s - \nu}{2} - 1} dt = \frac{2}{s - \nu}d^{s - \nu}.
	\end{equation}
	
	For the other integral, we make the change of variable $t = d^2/u$, obtaining:
	
	\begin{equation}
		\int_0^{d^2} t^{\frac{s - \eta}{2} - 1}e^{-\frac{d^2}{ct}}dt = d^{s - \eta} \int_1^\infty u^{\frac{\eta - s}{2} - 1} e^{-\frac{u}{c}} du.
	\end{equation}
	
	This integral is convergent and equal to a constant that depends only on $s$ and $c$. Then, we have for every $x,y \in M$:
	
	\begin{equation}
		i_s(x,y) \leq C \frac{d(x,y)^s}{\Vol{x}{d(x,y)}}
	\end{equation}
\end{proof}

\subsection{Estimation of the Bessel potential}
	Define the Bessel potential for $\lambda > 0$, $s > 0$ to be the operator $G_{s,\lambda} = \left( \Delta + \lambda^2 \right)^{-s/2}$ on $L^2(M, \mu)$. It is, by the spectral theorem, a bounded operator, and, similarly to the case of the Riesz potential, admits for kernel:
	\begin{equation}
		g_{s}^\lambda (x,y) = \frac{1}{\Gamma\left(\frac{s}{2}\right)} \int_0^\infty t^{s/2-1} e^{-\lambda^2 t} p_t(x,y) \diff t,
	\end{equation}
	
	\noindent provided that $g_{s}^\lambda$ is finite for all $x,y \in M$.

\begin{proposition}\label{prop:bessel estimate}
	Let $(M,g,\mu)$ be a complete weighted manifold satisfying $\FK{R}$ and $\rd{R}$. If $\lambda > 0$ is such that $\lambda R \geq 1$, then for any $s < \nu$, there is a constant $C > 0$, depending only on s and on the Faber-Krahn constants, such that for all $x,y \in X$ with $d(x,y) \leq R$, we have:
	\begin{equation}
		g_{s}^{\lambda}(x,y) \leq C \frac{d(x,y)^s}{\Vol{x}{d(x,y)}}.
	\end{equation}		
	
\end{proposition}

\begin{proof}

We have:

\begin{equation}
	g_s^\lambda(x,y) = \frac{1}{\Gamma\parenfrac{s}{2}} \int_0^\infty t^{\frac{s}{2}-1} e^{-\lambda^2 t} p_t(x,y) dt,
\end{equation}

\noindent and we split this integral into three, integrating on $(0, d^2)$, $(d^2, R^2)$ and $(R^2, +\infty)$. 

We use proposition \ref{pr:estimate on the heat kernel}. The same calculations as in the proof for the Riesz potential yields the estimate:

\begin{equation}
	\int_0^{d^2} t^{\frac{s}{2} -1} e^{-\lambda^2 t} p_t(x,y) dt \leq C \frac{d(x,y)^s}{\Vol{x}{d(x,y)}}
\end{equation}

When $d \leq \sqrt{t} \leq R$, we have by the $R$-reverse doubling that $\Vol{x}{d} \leq a \parenfrac{d}{\sqrt{t}}^\nu \Vol{x}{\sqrt{t}}$, thus: 

\begin{align*}
	\int_{d^2}^{R^2} t^{\frac{s}{2} - 1} e^{-\lambda^2 t} p_t(x,y) dt 
	&\leq 
	C \int_{d^2}^{R^2} t^{\frac{s}{2} - 1} e^{-\lambda^2 t} \frac{1}{\Vol{x}{\sqrt{t}}} e^{-\frac{d^2}{5t}} dt \\
	&\leq
	C \frac{d^{\nu}}{\Vol{x}{d}} \int_{d^2}^{R^2} t^{\frac{s - \nu}{2} - 1} e^{-\lambda^2 t} e^{-\frac{d^2}{5t}} dt \\
	&\leq
	C \frac{d^{\nu}}{\Vol{x}{d}} \int_{d^2}^{R^2} t^{\frac{s - \nu}{2} - 1} e^{-\lambda^2 t} dt \\
	&\leq 
	C \frac{d^{\nu}}{\Vol{x}{d}} \frac{2}{s - \nu}\left( R^{s - \nu} - d^{s - \nu}\right),
\end{align*}

\noindent and since $s - \nu < 0$ we have:

\begin{equation}
	\int_{d^2}^{R^2} t^{\frac{s}{2} - 1} e^{-\lambda^2 t} p_t(x,y) dt \leq C \frac{d(x,y)^s}{\Vol{x}{d(x,y)}}.
\end{equation}

Now for $t \geq R^2$, we simply have $\Vol{x}{R} \leq \Vol{x}{\sqrt{t}}$. Thus:
\begin{equation*}
	\int_{R^2}^\infty t^{\frac{s}{2} -1} e^{-\lambda^2 t} p_t(x,y) dt 
	\leq 
	C \frac{1}{\Vol{x}{R}} \int_{R^2}^\infty t^{\frac{s}{2}-1} e^{-\lambda^2 t}  e^{-\frac{d^2}{5t}} dt,
\end{equation*}

\noindent then, since $d \leq R$, by using the reverse doubling we obtain $\Vol{x}{d} \leq \parenfrac{d}{R}^\nu \Vol{x}{R}$. Moreover, we have $t^{\frac{s}{2} - 1} e^{-\lambda^2 t} \leq c_s \lambda^{2 - s} e^{-\frac{\lambda^2}{2}t}$, thus we have:

\begin{align*}
	\int_{R^2}^\infty t^{\frac{s}{2} -1} e^{-\lambda^2 t} p_t(x,y) dt 
	&\leq C
	\parenfrac{d}{R}^\nu \frac{\lambda^{2 - s}}{\Vol{x}{d}}\int_{R^2}^\infty e^{-\frac{\lambda^2}{2}t} dt \\ 
	&\leq C \parenfrac{d}{R}^\nu \frac{\lambda^{-s}}{\Vol{x}{d}} e^{-\frac{(\lambda R)^2}{2}}
\end{align*}

Then, since $\lambda R \geq 1$, and $f(t) = t^{-s} e^{-\frac{t^2}{2}}$ is decreasing, we have $\lambda^{-s} e^{-\frac{(\lambda R)^2}{2}} \leq R^s e^{-\frac{1}{2}}$, which leads to:

\begin{equation}
	\int_{R^2}^\infty t^{\frac{s}{2} -1} e^{-\lambda^2 t} p_t(x,y) dt 
	\leq C \parenfrac{R}{d}^{s - \nu} \frac{d^s}{\Vol{x}{d}},
\end{equation}

\noindent then $s - \nu < 0$ and $d < R$, thus we have:

\begin{equation}
	\int_{R^2}^\infty t^{\frac{s}{2} -1} e^{-\lambda^2 t} p_t(x,y) dt 
	\leq C\frac{d(x,y)^s}{\Vol{x}{d(x,y)}} 
\end{equation}
\end{proof}	

\section{Proof of the main results}\label{sec:Proof}

Let $(M,g,\mu)$ be a weighted Riemannian manifold. Let $V\in L^1_{loc}(M, \diff\mu)$, $V \geq 0$, for any $R > 0$ and $p \geq 1$, we define $N_p(V)$ and $N_{p,R}(V)$ as in \eqref{eq:Morrey} and \eqref{eq:def morrey norm}. Notice that $N_p(V) = M_{2p}(V^p)^{1/p}$.

Though we can deduce theorem \ref{th:Fefferman-Phong generalized} as a special case of \ref{th:Weak_positivity}, we start by giving a separate, simpler proof of it. The general idea behind the proof of both theorems remains the same, but in the case of theorem \ref{th:Weak_positivity}, much more care will be required in establishing the bounds on the norm of certain operators.

\subsection{Proof the global inequality (Theorem \ref{th:Fefferman-Phong generalized})}

	We assume here that $\mu$ is reverse doubling of order $\nu$, with $\nu > 1$, and we will show later on that this implies the general result.
	
	Given $\varphi \in L^2(M)$, we first estimate $\left\|\Delta^{-1/2}\left(V^{1/2}\varphi\right)\right\|_2$. By proposition \ref{pr:riesz_potential},	for any non-negative, measurable function $f$, we have:
	\begin{equation*}
		\Delta^{-\frac{1}{2}} f(x) \leq C \int_M \frac{d(x,y)}{\Vol{x}{d(x,y)}} f(y) d\mu(y).
	\end{equation*}
	
	Let $T$ be the operator defined by the kernel $K(x,y) = \frac{d(x,y)}{\Vol{x}{d(x,y)}}$. Since $M$ is a doubling space, applying corollary \ref{cor:riesz_bounded}, we have that:
	\begin{equation*}
		\|Tf\|_2 \leq C \|M_1 f\|_2,
	\end{equation*}
	
	\noindent and so:
	\begin{equation*}
		\|\Delta^{-\frac{1}{2}} f\| \leq \|M_1 f\|_2.
	\end{equation*}
	
	It follows that:
	\begin{equation*}
		\left\|\Delta^{-1/2}\left(V^{1/2}\varphi\right)\right\|_2 \leq C\left\|M_1\left(V^{1/2}\varphi\right)\right\|_2.
	\end{equation*}
	
	Then, using the Hölder inequality, we have, with $q = 2p$, $\frac{1}{q} + \frac{1}{q'} = 1$:
	\begin{equation*}
		M_1 \left(V^{1/2}\varphi\right) \leq M_q \left(V^{q/2}\right)^{\frac{1}{q}} M_0 \left(|\varphi|^{q'}\right)^\frac{1}{q'} \leq N_p(V)^\frac{1}{2} M_0 \left( |\varphi|^{q'}\right)^\frac{1}{q'},
	\end{equation*}
	
	\noindent since $N_p(V) = M_{2p}(V^p)^\frac{1}{p}$. And by the $L^{2/q'}$ boundedness of the Hardy-Littlewood maximal function, we obtain that:
	\begin{align*}
		\left\|M_1 \left(V^{1/2}\varphi\right)\right\|_2 
		&\leq N_p(V)^\frac{1}{2} \left\|M_0 \left( |\varphi|^{q'}\right)^\frac{1}{q'}\right\|_2 \\
		&\leq N_p(V)^\frac{1}{2} \left\|M_0 \left( |\varphi|^{q'}\right)\right\|_{\frac{2}{q'}}^\frac{1}{q'} \\
		&\leq C_{p} N_p(V)^\frac{1}{2} \|\varphi\|_2.
	\end{align*}
	
	And so:
	\begin{equation}\label{eq:global adjoint L2 norm}
		\left\|\Delta^{-1/2}\left(V^{1/2}\varphi\right)\right\|_2 \leq C_p N_p(V)^\frac{1}{2} \|\varphi\|_2,
	\end{equation}
	
	\noindent and $\Delta^{-1/2}\left(V^{1/2} \cdot\right)$ is a bounded linear operator on $L^2$. Its adjoint is $V^{1/2}\Delta^{-1/2}$, and for any $\psi \in \SmoothComp{M}$, if we let $\varphi = \Delta^{1/2}\psi$, $\varphi \in L^2$ and $\int_M V\psi^2 d\mu = \int_M \left|V^{1/2}\Delta^{-1/2}\varphi\right|^2 d\mu$. By \eqref{eq:global adjoint L2 norm} we get:
	
	\begin{equation}
		\int_M V \psi^2 d\mu \leq C_p N_p(V)^{1/2} \|\varphi\|_2^2 = C_p N_p(V)\|\nabla\psi\|^2_2.
	\end{equation}

\subsection{Proof of the local inequality (Theorem \ref{th:Weak_positivity})}

%
%

We again make a technical hypothesis on the reverse doubling order, proving the following result:

\begin{theorem}\label{th:c_lambda_estimates}
	Let $(M,g,\mu)$ be a complete weighted Riemannian manifold satisfying $\FK{R}$ for some $R > 0$, and $\rd{R}$ for some $\nu > 1$. Then, for any $p > 1$, there is some constant $C_p$ depending only on the Faber Krahn constants and $p$, such that for any non-negative, locally integrable $V$, and any $\psi \in \SmoothComp{M}$, the following inequality holds:	
	\begin{equation}
		\int_M V\psi^2 d\mu \leq C_p N_{p,R}(V) \left( \int_M |\nabla \psi|^2 d\mu + \frac{1}{R^2} \int_M \psi^2 d\mu \right)
	\end{equation}
\end{theorem}
%
%
%

We will show afterwards how to remove this hypothesis to obtain theorem \ref{th:Weak_positivity}

\subsubsection{Proof of Theorem \ref{th:c_lambda_estimates}}

	Given $\lambda > 0$ such that $\lambda R \geq 1$, we let $g^\lambda = g_1^\lambda$ be the kernel of the Bessel potential $G^\lambda = \left(\Delta + \lambda^2 \right)^{-\frac{1}{2}}$. By proposition \ref{prop:bessel estimate}, we have $g^\lambda (x,y) \leq \frac{d(x,y)}{\Vol{x}{d(x,y)}}$, for $\lambda d(x,y) < 1$. We let:
	\begin{equation}\label{eq:def T_1 T_2}
		T_1 \psi(x) = \int_{d(x,y) \leq R} g^{\lambda}(x,y) V^{\frac{1}{2}}(y) \psi(y) d\mu(y),\quad T_2 \psi(x) = \int_{d(x,y) > R} g^{\lambda}(x,y) V^{\frac{1}{2}}\psi(y) d\mu(y)
	\end{equation}
	
	By corollary \ref{cor:riesz_bounded}, we have $\|T_1\psi\|_p \leq C_p \left\|M_{1,R} V^{\frac{1}{2}}\psi\right\|_p$, and the rest follows as in the global case. To estimate $\|T_2\|$, we can study the operator $T_2 T_2^*$, with kernel $a(x,z)$ defined as:
	\begin{equation*}
		a(x,z) = \int_M g^{\lambda}(x,y) \charaset_{\set{d(x,y) > R}} |V(y)| \charaset_{\set{d(y,z) > R}}g^{\lambda}(y,z) d\mu(y),
	\end{equation*}
	
	\noindent where we recall $\charaset_E$ to be the characteristic function of the set $E$. We then apply the Schur test to $T_2T_2^*$: being a symetric operator, it will be bounded on $L^2$ if the integral:	
	\begin{equation*}
		\int_M |a(x,z)|d\mu(z)
	\end{equation*}
	
	\noindent is uniformely bounded with respect to $x$. Given that we have $g^\lambda(y,z) = \int_0^\infty \frac{e^{-\lambda^2 t}}{\sqrt{\pi t}} p_t(y,z) dt$, as well as $\int_M p_t(y,z) d\mu(z) \leq 1$, we calculate:
	\begin{equation*}
		\int_M g^\lambda(y,z) d\mu(z) \leq \int_0^\infty \frac{e^{-\lambda^2 t}}{\sqrt{\pi t}} dt = \frac{1}{\lambda},
	\end{equation*}
	
	\noindent but then:
	\begin{align*}
		\int_M |a(x,z)|d\mu(z) &\leq \int_M \int_M g^{\lambda}(x,y) \charaset_{\set{d(x,y) > R}} |V(y)| \charaset_{\set{d(y,z) > R}}g^{\lambda}(y,z) d\mu(y)d\mu(z) \\
		&\leq \int_{M\setminus B(x,R)} g^{\lambda}(x,y) \int_{M\setminus B(z,R)} g^{\lambda}(y,z) d\mu(z) |V(y)| d\mu(y)d\mu(z),
	\end{align*}
	
	\noindent and so we get:
	\begin{equation*}
		\int_M |a(x,z)| d\mu(z) \leq \frac{1}{\lambda}\int_{M\setminus B(x,R)} g^\lambda(x,y) |V(y)| d\mu(y),
	\end{equation*}
	
	\noindent or:
	\begin{equation}
		\int_M |a(x,z)| d\mu(z) \leq \frac{1}{\lambda}\int_0^{+\infty} \frac{e^{-\lambda^2 t}}{\sqrt{\pi t}} \int_{M\setminus B(x,R)}  p_t(x,y) |V(y)|d\mu(y) dt
	\end{equation}
	
	To estimate this integral, we estimate $\int_{M\setminus B(x,R)} p_t(x,y) |V(y)| d\mu(y)$ by distinguishing the cases $t \geq R^2$ and $t < R^2$. 
	
	For any $x\in M$, $r \geq R$, $p \geq 1$, we have:
	\begin{equation}\label{eq:estimate on int V}
		\int_{B(x,r)} |V(y)|d\mu(y) \leq \frac{C}{R^2} \Vol{x}{r} N_{p,R}(V)
	\end{equation}
	
	Indeed, we cover $B(x,r)$ by a family $B_i$ of balls of radius $R$ with center in $B(x,r)$ such that the balls with half the radius are pairwise disjoints. Then we have:
	\begin{align*}
		\int_{B(x,r)} |V(y)| d\mu(y) &\leq 
		\sum_i \int_{B_i} |V(y)| d\mu(y) \\
		&\leq
		C \sum_i \mu\left(\frac{1}{2}B_i \right) \fint_{B_i} |V(y)| d\mu(y) \\
		&\leq
		\frac{C}{R^2} \sum_i \mu\left(\frac{1}{2}B_i \right) \left(R^2\fint_{B_i} |V(y)|^p d\mu(y)\right)^\frac{1}{p} \\
		&\leq \frac{C}{R^2} \Vol{x}{r + \frac{R}{2}} N_{p,R}(V) \\
		&\leq \frac{C}{R^2} \Vol{x}{r} N_{p,R}(V).
	\end{align*}
	
	For all $t \geq R^2$, we use the corresponding estimate of proposition \ref{pr:estimate on the heat kernel}, and get a constant $\alpha > 0$ such that:
	
	\begin{equation*}
		\int_{M\setminus B(x,R)} p_t(x,y)|V(y)| d\mu(y) \leq
		\frac{Ce^{\alpha\frac{t}{R^2}}}{\Vol{x}{R}} \int_{M\setminus B(x,R)} e^{-\frac{d(x,y)^2}{5t}}|V(y)| d\mu(y).
	\end{equation*}
	
	By writhing $e^{-\frac{d(x,y)^2}{5t}} = \int_{d(x,y)}^{+\infty} \frac{2r}{5t} e^{-\frac{r^2}{5t}} dr$, we have:		
	\begin{equation*}
		\int_{M\setminus B(x,R)} e^{-\frac{d(x,y)^2}{5t}} |V(y)|d\mu(y) = \int_R^\infty e^{-\frac{r^2}{5t}} \frac{2r}{5t} \left(\int_{B(x,r)\setminus B(x,R)} |V| d\mu \right) dr,
	\end{equation*}
	
	\noindent then using \eqref{eq:estimate on int V} we obtain
	\begin{equation*}
		\int_{M\setminus B(x,R)} e^{-\frac{d(x,y)^2}{5t}} |V(y)| d\mu(y) 
		\leq 
		\frac{C}{R^2}N_{p,R}(V) \int_R^\infty e^{-\frac{r^2}{5t}} \frac{2r}{5t}\Vol{x}{r}dr,
	\end{equation*}
	
	\noindent then by the $R$-doubling, using \eqref{eq:quasi_doubling}, there is a constant $\beta > 0$ that depends on the doubling constant such that $\Vol{x}{r} \leq \Vol{x}{R} e^{\beta \frac{r}{R}}$, thus:
	\begin{equation*}
		\int_{M\setminus B(x,R)} e^{-\frac{d(x,y)^2}{5t}} |V(y)| d\mu(y)
		\leq
		\frac{C}{R^2}N_{p,R}(V) \Vol{x}{R} \int_R^\infty \frac{2r}{5t} e^{-\frac{r^2}{5t} + \beta\frac{r}{R}} dr,
	\end{equation*}
	
	\noindent and we then can find a constant $\gamma > 0$ such that $e^{-\frac{r^2}{5t} + \beta \frac{r}{R}} \leq e^{-\frac{r^2}{10t} + \gamma\frac{t}{R^2}}$. As a result, we get that:	
	\begin{equation*}
		\int_R^\infty \frac{2r}{5t}e^{-\frac{r^2}{10t}} dr = 2 e^{-\frac{R^2}{10t}}.
	\end{equation*}

		To conclude we obtain that for all $t \geq R^2$
	\begin{equation}
		\int_{M\setminus B(x,r)} p_t(x,y) |V(y)| d\mu(y) \leq C \frac{N_{p,R}(V)}{R^2} e^{-\frac{R^2}{10t} + 2\gamma\frac{t}{R^2}}.
	\end{equation}
	
	For $t \leq R^2$, we obtain in the same way:
	
	\begin{align*}
		\int_{M\setminus B(x,R)} p_t(x,y) |V(y)| d\mu(y) 
		&\leq 
		\frac{c}{\Vol{x}{\sqrt{t}}} \frac{N_{p,R}(V)}{R^2} \int_{R}^\infty e^{-\frac{r^2}{5t}} \frac{2r}{5t} \Vol{x}{r} dr \\
		&\leq
		\frac{c}{\Vol{x}{\sqrt{t}}} \Vol{x}{R} \frac{N_{p,R}(V)}{R^2} \int_R^\infty e^{-\frac{r^2}{5t}} \frac{2r}{5t} e^{\beta \frac{r}{R}} dr\\
		&\leq
		c \parenfrac{R}{\sqrt{t}}^\nu \frac{N_{p,R}(V)}{R^2} e^{-\frac{R^2}{10t} + \gamma \frac{t}{R^2}},
	\end{align*}
	
	\noindent and finally we obtain:	
	\begin{equation}
		\int_{M\setminus B(x,R)} p_t(x,y)|V(y)| d\mu(y) \leq C \left(\max\left(\frac{R}{\sqrt{t}}, 1 \right) \right)^\nu \frac{N_{p,R}(V)}{R^2} e^{-\frac{R^2}{10t} + 2\gamma \frac{t}{R^2}}.
	\end{equation}
	
	Thus we get the majoration:	
	\begin{equation}
		\int_M |a(x,z)| d\mu(z) \leq \frac{C}{\lambda}\frac{N_{p,R}(V)}{R^2} \int_0^\infty \left(\max\left(\frac{R}{\sqrt{t}}, 1 \right) \right)^\nu  e^{-\frac{R^2}{10t} + 2\gamma \frac{t}{R^2}} e^{-\lambda^2 t} \frac{dt}{\sqrt{\pi t}},
	\end{equation}
	
	\noindent which by a change of variable $t = R^2 u$, transform into:
	
	\begin{equation}
		\int_M |a(x,z)| d\mu(z) \leq \frac{C}{\lambda}\frac{N_{p,R}(V)}{R} \int_0^\infty \left(\max\left(\frac{1}{\sqrt{u}}, 1 \right) \right)^\nu  e^{-\frac{1}{10u} + (2\gamma - \lambda^2 R^2)u} \frac{du}{\sqrt{\pi u}},
	\end{equation}
	
	\noindent and if $\lambda R \geq \sqrt{3\gamma} = \kappa > 1$ we obtain:
	\begin{equation}
		\int_M a(x,z) dz \leq C N_{p,R}(V).
	\end{equation}
	
	Thus, by the Schur test, $\|T_2 T_2^*\|_{L^2 \rightarrow L^2} \leq C N_{p,R}(V)$, and $\|T_2\|_{L^2 \rightarrow L^2} \leq C N_{p,R}(V)^\frac{1}{2}$. Then, we have, for all $\lambda \geq \frac{\kappa}{R}$:
	\begin{equation}
		\int_M V \psi^2 d\mu \leq C\left( \int_M |\nabla \psi|^2 d\mu + \lambda^2 \int_M \psi^2 d\mu \right),
	\end{equation}
	
	\noindent and in particular:
	\begin{equation}
		\int_M V \psi^2 d\mu \leq C\kappa^2\left( \int_M |\nabla \psi|^2 d\mu + \frac{1}{R^2} \int_M \psi^2 d\mu \right).
	\end{equation}

\subsubsection{Proof of theorem \ref{th:Positive lambda_1}}

We now suppose that $\lambda_1(M) > 0$. Then the previous results can be strenghtened to prove theorem \ref{th:Positive lambda_1}.

\begin{proof}
	We apply theorem \ref{th:Weak_positivity}, and use that $\lambda_1(M) \int_M \psi^2 \diff\mu \leq \int_M |\nabla\psi|^2\diff\mu$. Then we obtain:
	
	\begin{equation*}
		\llangle V\psi,\psi\rrangle \leq C_p N_{p,R}(V) \left(1 + \frac{1}{\lambda_{1}(M)R^2}\right) \int_M |\nabla\psi|^2\diff\mu,
	\end{equation*}
	
	\noindent which gives:
	
	\begin{equation*}
		\frac{\lambda_1(M) R^2}{C_p N_{p,R}(V)(1 + \lambda_1(M) R^2)} \int_M V\psi^2 \diff\mu \leq \int_M |\nabla \psi|^2 \diff\mu,
	\end{equation*}
	
	\noindent and:
	
	\begin{equation*}
		\frac{\lambda_1(M) R^2}{2C_p N_{p,R}(V)(1 + \lambda_1(M) R^2)}  \int_M V\psi^2 \diff\mu + \frac{\lambda_1(M)}{2} \int_M \psi^2 \diff\mu \leq \int_M |\nabla \psi|^2 \diff\mu.
	\end{equation*}
	
	Then, for any $V$, we have:
	
	\begin{equation}
		\llangle V\psi, \psi\rrangle \leq \frac{C_p N_{p,R}(V)(1 + \lambda_1(M)R^2)}{\lambda_1(M)R^2}  \left( \|\nabla\psi\|^2 - \frac{\lambda_1(M)}{2} \|\psi\|^2 \right),
	\end{equation}
	
	\noindent which is \eqref{eq:Positive lambda_1}.
\end{proof}

\subsection{Proof of theorem \ref{th:lower_bound estimates}}

Let $C_p$ be the constant of theorem \ref{th:Weak_positivity}. We let 

\begin{equation}
	L = \sup_{x, \delta} \left(2C_p \left(\fint_{B(x,\delta)} V^p \diff\mu \right)^{1/p} - \delta^{-2} \right).
\end{equation}

Then we have:

\begin{align*}
	\left(\fint_{B(x,\delta)} V^p \diff\mu \right)^{1/p} 
	&\leq \frac{L + \delta^{-2}}{2C_p}, \\
	\left(M_{2p,\delta} (V^p)(x) \right)^{1/p} 
	&\leq \frac{\delta^2 L + 1}{2C_p}.
\end{align*}

Take $\delta = L^{-1/2}$, then $N_{p,\delta}(V) \leq \frac{1}{C_p}$. Then by theorem \ref{th:Weak_positivity} we have:
\begin{equation}
	\llangle V\psi, \psi\rrangle - \|\nabla\psi\|_2^2 \leq L\|\psi\|^2,
\end{equation}

\noindent thus:
\begin{equation}
	- \lambda_1(\Delta - V) \leq \sup_{x, \delta} \left(2C_p \left(\fint_{B(x,\delta)} V^p \diff\mu \right)^{1/p} - \delta^{-2} \right).
\end{equation}

Meanwhile, let $r < \lambda^{-1} \leq R$, and define $f_r: [0, \infty) \goesto [0, +\infty)$ by $f(t) = r$ if $t \leq r$, $f(t) = 2r - t$ if $t \in (r, 2r]$ and $f_r(t) = 0$ if $t > 2r$. Then for $o \in M$, $\psi = f_r(d(o,x))$. $\psi$ is a Lipschitz function with compact support, and we have, by $\doubling{R}$:

\begin{align*}
	\lambda_1(\Delta - V) &\leq \frac{\|\nabla \psi\|^2 - \int_M V\psi^2 \diff\mu}{\|\psi\|^2} \\
	&\leq \frac{\Vol{x}{2r}}{r^2\Vol{x}{r}} - \fint_{B(x,r)} V\diff\mu \\
	&\leq A r^{-2} -  \fint_{B(x,r)} V\diff\mu \\
	&\leq (r/\sqrt{A})^{-2} -  A^{-1 - \eta/2} \fint_{B(x,r/\sqrt{A})} V \diff\mu,
\end{align*}

this for all $r > 0$. Thus:

\begin{equation}
	-\lambda_1(\Delta - V) \geq \sup_{x,\delta} \left( A^{-1 - \eta/2} \fint_{B(x,\delta)} V \diff\mu - \delta^{-2}\right).
\end{equation}

\subsection{Removing the dependancy on reverse doubling}

Let $M$ be a manifold satisfying $\FK{}$. We consider $\tilde{M} = \R\times M$, $(\tilde{M},\tilde{g}, \tilde{\mu})$ the product Riemannian manifold: $\tilde{g} = \diff x^2 + g$, $\diff\tilde{\mu} = \diff x \diff\mu$. For $V \in L_{loc}^1(M)$ we define $\tilde{V}(x,m) = V(m)$. We write $\tilde{\Delta}$ for the laplacian on $(\tilde{M},\tilde{g},\tilde{\mu})$, and $\Delta$ for the laplacian on $(M,g,\mu)$. The Morrey norm in $\tilde{M}$ is written $\tilde{N}_{p,R}$.

We have:

\begin{proposition}
	$(\tilde{M},\tilde{g},\tilde{\mu})$ satisfies the following properties:
	
	\begin{enumerate}
		\item If $\mu$ is $R$-doubling, then $\tilde{\mu}$ is $R$-doubling, and $R$-reverse doubling with order $\nu > 1$.
		\item The heat kernel of $\tilde{M}$ is $\tilde{p}_t((x,m),(y,n)) = \frac{1}{\sqrt{4\pi t}} e^{-\frac{|x - y|^2}{4t}} p_t(m,n)$.
		\item If $M$ satisfies $\FK{R}$, then there is some $\theta \in (0,1)$ such that $\tilde{M}$ satisfies $\FK{\theta R}$. $\theta$ depends only on the Faber Krahn constants.
		\item $\lambda_1(\tilde{\Delta} - \tilde{V}) = \lambda_1(\Delta - V)$
		\item If $\mu$ is $R$-doubling, then there are two constants $c, C$ which depends only on the doubling constant, such that $cN_{p,R}(V) \leq \tilde{N}_{p,R}(\tilde{V}) \leq C N_{p,R}(V)$
	\end{enumerate}
\end{proposition}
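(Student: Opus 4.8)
The plan is to verify the five assertions in turn; the real work lies in (1) and (3), while (2), (4), (5) are essentially bookkeeping. The basic tool for (1), (3), (5) is the slicing identity for balls in the product: since the product distance satisfies $\tilde d((x,m),(y,n))^2=|x-y|^2+d(m,n)^2$, one has
\[
	\tilde\mu\!\left(B_{\tilde M}((x,m),r)\right)=\int_{-r}^{r}\mu\!\left(B\!\left(m,\sqrt{r^2-s^2}\right)\right)\diff s .
\]
I would dispatch (2) first: $\tilde\Delta$ acts as $-\partial_x^2+\Delta$, a sum of two commuting non-negative self-adjoint operators, so by the spectral theorem $e^{-t\tilde\Delta}=e^{-t\Delta_{\mathbf R}}\otimes e^{-t\Delta}$ with $\Delta_{\mathbf R}=-\partial_x^2$, whose kernel is $(4\pi t)^{-1/2}e^{-|x-y|^2/4t}$; the product rule for integral kernels gives the stated formula (equivalently, one checks directly that the asserted function solves $\partial_t u+\tilde\Delta u=0$ on $\tilde M$ and tends to the identity as $t\to 0$).

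For (1): plugging $\mu(B(m,2\rho))\le A\mu(B(m,\rho))$ (valid for $\rho\le R$, i.e. $\doubling{R}$ for $\mu$) into the slicing identity gives $\tilde\mu(B_{\tilde M}(p,2r))\le 2A\,\tilde\mu(B_{\tilde M}(p,r))$ for $r\le R$, which is $\doubling{R}$ for $\tilde\mu$; using instead only the monotonicity of $\rho\mapsto\mu(B(m,\rho))$ one gets $\tilde\mu(B_{\tilde M}(p,2r))\ge 2\,\tilde\mu(B_{\tilde M}(p,r))$, hence $\rd{R}$ of order $1$. The delicate point is to push the order strictly above $1$: here one must use that $M$ is a genuine manifold, so that $\rho\mapsto\mu(B(m,\rho))$ grows at small scales like $\rho^{\dim M}$ with $\dim M\ge 1$ — morally, the line direction contributes one full unit of dimension on top of the non-negative growth exponent already present in $M$. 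Concretely, since $\mathbf R$ is unbounded we have $\tilde M\setminus B_{\tilde M}(p,3R/4)\neq\emptyset$ for every $p$, so the proposition of Section~\ref{sec:Harmonic analysis} saying that $\doubling{R}$ plus the non-empty-complement property forces reverse doubling of some positive order can be invoked (on $\tilde M$, or on $M$ when $M$ is non-compact), and feeding the resulting exponent back into the slicing computation yields $\rd{cR}$ for some $\nu>1$. I expect this upgrade to $\nu>1$ to be the main obstacle.

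Next, (3): by Theorem~\ref{th:heat_kernel_estimate}, $\FK{R}$ for $M$ forces $\doubling{R}$ for $\mu$ together with the on-diagonal bound $p_t(m,m)\le C\,\mu(B(m,\sqrt t))^{-1}$ for $t\le R^2$, with $C$ depending only on $b,\eta$. By (1), $\tilde\mu$ is $\doubling{R}$, and the slicing identity (lower bound by restricting to $|s|\le\sqrt t/2$ and invoking $\doubling{R}$; upper bound trivial) gives $\tilde\mu(B_{\tilde M}((x,m),\sqrt t))\asymp\sqrt t\,\mu(B(m,\sqrt t))$ up to constants depending only on $A$, for $\sqrt t$ of order at most $R$. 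Combined with (2) this gives $\tilde p_t(p,p)=(4\pi t)^{-1/2}p_t(m,m)\le C'\,\tilde\mu(B_{\tilde M}(p,\sqrt t))^{-1}$ for $t\le R^2$. Since $\tilde M$ is complete (as $M$ is) and $\tilde M\setminus B_{\tilde M}(p,3R/4)\neq\emptyset$ for all $p$, Proposition~\ref{pr:heat + doubling implies FKR} applies and produces $\FK{\kappa R}$ for $\tilde M$ with $\kappa=\kappa(b,\eta)\in(0,1)$ and Faber--Krahn constants depending only on $b,\eta$; one takes $\theta=\kappa$.

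Finally, (4) and (5). For (4), writing $Q_V(\tilde\psi)=\langle(\tilde\Delta-\tilde V)\tilde\psi,\tilde\psi\rangle$, Fubini gives $Q_V(\tilde\psi)=\int_{\mathbf R}\big(\|\partial_x\tilde\psi(x,\cdot)\|_{L^2(\mu)}^2+\|\nabla_M\tilde\psi(x,\cdot)\|_{L^2(\mu)}^2-\int_M V\,\tilde\psi(x,\cdot)^2\diff\mu\big)\diff x$; bounding the last two terms from below for each fixed $x$ by $\lambda_1(\Delta-V)\,\|\tilde\psi(x,\cdot)\|_{L^2(\mu)}^2$ (legitimate since $\tilde\psi(x,\cdot)\in\SmoothComp{M}$ and $\tilde V(x,\cdot)=V$) and integrating in $x$ yields $\lambda_1(\tilde\Delta-\tilde V)\ge\lambda_1(\Delta-V)$; for the reverse inequality one tests with $\tilde\psi(x,m)=\chi(x/j)\,\psi(m)$, $\psi$ almost optimal on $M$, so that the Rayleigh quotient equals $\|\chi(\cdot/j)'\|^2\|\chi(\cdot/j)\|^{-2}+(\|\nabla\psi\|^2-\int V\psi^2)\|\psi\|^{-2}$ and the first term tends to $0$ as $j\to\infty$. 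For (5), the slicing identity together with $\doubling{R}$ sandwiches $\int_{B_{\tilde M}((x,m),r)}\tilde V^{p}\diff\tilde\mu=\int_{-r}^{r}\int_{B(m,\sqrt{r^2-s^2})}V^{p}\diff\mu\,\diff s$ between constants (depending only on $A$) times $r\int_{B(m,r)}V^{p}\diff\mu$, and $\tilde\mu(B_{\tilde M}((x,m),r))$ between constants times $r\,\mu(B(m,r))$; dividing, multiplying by $r^{2p}$ and taking suprema gives the two-sided comparison $cN_{p,R}(V)\le\tilde N_{p,R}(\tilde V)\le CN_{p,R}(V)$ (at worst with a bounded rescaling of the radius, which is immaterial for the applications).
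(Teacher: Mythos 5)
Your proof is correct and follows essentially the same route as the paper's: the slicing/sandwich comparison $\tilde\mu(\tilde B((x,m),r))\asymp r\,\Vol{m}{r}$ drives (1) and (5), the heat kernel is the tensor product for (2), and (3) is exactly an application of Proposition~\ref{pr:heat + doubling implies FKR}; only in (4) do you argue variationally where the paper reads off the spectrum of a sum of commuting non-negative operators, an immaterial difference. The one point to tighten is the upgrade to reverse-doubling order $\nu>1$ in (1): invoking the ``non-empty complement'' proposition on $\tilde M$ alone only yields some small positive order for $\tilde\mu$ that does not bootstrap past $1$, so you must (as the paper implicitly does) obtain $\rd{R}$ of some order $\nu>0$ for $\mu$ on $M$ itself and then add the exponent $1$ contributed by the line factor.
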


\begin{proof}\

$1.$ For $E\subset \R$ measurable, we denote $|E|$ the usual lebesgue measure of $E$. We have:

\begin{equation}\label{eq:volume comparison}
	|(-r/2,r/2)|\mu(B(m, r/2)) \leq \tilde{\mu}(\tilde{B}((x,m), r)) \leq |(-r, r)|\mu(B(m,r)).
\end{equation}

From this, with $r \leq R$ we immediately get $\tilde{\mu}(\tilde{B}((x,m), 2r)) \leq 4 A^2 \tilde{\mu}(\tilde{B}((x,m), r))$, with $A$ the $R$-doubling constant of $\mu$. Moreover, since $\mu$ is $R$-doubling, it is $R$-reverse doubling, with reverse doubling order $\nu > 0$. Then, we have, for $r < r' < \theta R$:

\begin{align*}
	\frac{\tilde{\mu}(\tilde{B}((x,m),r'))}{\tilde{\mu}(\tilde{B}((x,m),r))} &\geq \frac{r'}{2r}\frac{ \mu(B(m,r'/2))}{\mu(B(m,r))} \\
	&\geq \frac{1}{2A} \frac{r'}{r} \frac{\Vol{m}{r'}}{\Vol{m}{r}} \\
	&\geq \frac{a}{2A} \parenfrac{r'}{r}^{1+\nu}
\end{align*}

Thus $\tilde{\mu}$ is reverse doubling of order $\tilde{\nu} = 1 + \nu > 1$.

$2., 4.$ We have $\tilde{\Delta} = -\frac{\diff^2}{\diff x^2} + \Delta$. Thus $\tilde{p}_t((x,m),(y,n)) = \frac{1}{\sqrt{4\pi t}} e^{-\frac{|x - y|^2}{4t}} p_t(m,n)$, and the spectrum of $\tilde{\Delta} - \tilde{V}$ is:

\begin{equation*}
	Sp(\tilde{\Delta} - \tilde{V}) = \set*{\lambda + \lambda';\quad \lambda \in Sp(\Delta - V), \lambda' \geq 0}.
\end{equation*}

Thus the infimum of the spectrum of $\tilde{\Delta} - \tilde{V}$ is the infimum of the spectrum of $\Delta - V$.

$3.$ We use proposition \ref{pr:heat + doubling implies FKR}.

$5.$ We use \eqref{eq:volume comparison}. Using that $\int_{\tilde{B}} \tilde{V} \diff\tilde{\mu} \leq 2r \int_B V\diff\mu$, we have:

\begin{equation*}
	\frac{r^{2p}}{\tilde{\mu}(\tilde{B}((x,m),r)} \int_{\tilde{B}} \tilde{V}^p \diff\tilde\mu \leq \frac{r^{2p}}{(r/2) \Vol{m}{r/2}} 2r \int_B V^p \diff\mu.
\end{equation*}

Then by $R$ doubling $\tilde{N}_{p,R}(\tilde{V}) \leq 4A N_{p,R}(V)$. The other inequality is obtained in a similar same way.
\end{proof}

\begin{proof}[Proof of theorem \ref{th:Weak_positivity}]

From the points $1., 3.$ of the above proposition, if $(M,g,\mu)$ is a manifold satisfying $\FK{R}$, then there is some $\theta \in (0, 1)$, depending only on the Faber Krahn constants, such that $(\tilde{M},\tilde{g},\tilde{\mu})$ satisfies $\FK{\theta R}$ and $\rd{R}$, with $\nu > 1$. Then we can apply \ref{th:Weak_positivity} to $\tilde{M}$: there is a constant $\tilde{C}_p$ such that if $\tilde{V}$ is such that $\tilde{C}_p \tilde{N}_{p,R}(\tilde{V}) \leq 1$, then $\lambda_1(\tilde{\Delta} - \tilde{V}) \geq -\frac{1}{\theta^2 R^2}$.

Using $5.$, then there is a constant $C_p > 0$ such that $C_p N_{p,R}(V) \geq \tilde{C_p} \tilde{N}_{p,R}(\tilde{V})$. Then since $\lambda_1(\Delta - V) = \lambda_1(\tilde{\Delta} - \tilde{V})$, if $\C_p N_{p,R}(V) \leq 1$, then $\lambda_1(\Delta - V) \geq -\frac{1}{\theta^2 R^2}$. For an arbitrary $V \geq 0$, locally integrable, with $N_{p,R}(V) < +\infty$, we can apply the above to $V/C_p N_{p,R}(V)$, then for any $\psi \in \SmoothComp{M}$:

\begin{equation}
	\frac{1}{C_p N_{p,R}(V)}\int_M V \psi^2 \diff \mu \leq \frac{1}{\theta^2} \int_M \left( |\nabla \psi|^2 + \frac{1}{ R^2} \psi^2 \right) \diff\mu,
\end{equation}

\noindent which is \eqref{eq:Weak_majoration}.
\end{proof}

\section{Hardy inequality}\label{sec:Hardy}
For some point $o \in M$, the $L^2$ Hardy inequality:

\begin{equation}
	\forall \psi \in \SmoothComp{M},\; \int_M \frac{\psi(x)^2}{d(o,x)^2} \diff\mu(x) \leq C \int_M |\nabla \psi(x)|^2 \diff\mu(x)
\end{equation}

\noindent is equivalent to the positivity of the operator $\Delta - V$, with $V(x) = \frac{1}{C}d(o,x)^{-2}$. Moreover, we have:

\begin{proposition}
	Let $(M, g, \mu)$ be a weighted Riemannian manifold, $R \in (0, \infty]$. If $\mu$ satisfies $\doubling{R}$ and $\rd{R}$, with $\nu > 1$, then for any $p \in (1, \nu/2)$, there is a constant $K_p < \infty$ such that for all $r < R$ we have:
	
	\begin{equation}\label{eq:hardy_good_potential}
		r^2 \left( \fint_{B(x,r)} d(o,y)^{-2p} \diff \mu \right)^{1/p} \leq K_p.
	\end{equation}
\end{proposition}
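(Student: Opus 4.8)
The plan is to bound the Morrey-type quantity $r^2\bigl(\fint_{B(x,r)} d(o,y)^{-2p}\,d\mu\bigr)^{1/p}$ by splitting into the two regimes governed by the position of $x$ relative to $o$: the ``far'' case $d(o,x) \geq 2r$, where the potential $d(o,\cdot)^{-2}$ is essentially constant on $B(x,r)$, and the ``near'' case $d(o,x) < 2r$, where $B(x,r) \subset B(o,3r)$ and the singularity of the potential at $o$ must be dealt with by integrating against the reverse doubling estimate. In both cases the exponent condition $p < \nu/2$ is exactly what makes the relevant integral converge.

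First I would treat the far case. If $d(o,x) \geq 2r$, then for every $y \in B(x,r)$ we have $\tfrac12 d(o,x) \leq d(o,y) \leq 2 d(o,x)$, so $d(o,y)^{-2p} \leq 2^{2p} d(o,x)^{-2p}$ and therefore $r^2 \bigl(\fint_{B(x,r)} d(o,y)^{-2p}\,d\mu\bigr)^{1/p} \leq 4 r^2 d(o,x)^{-2} \leq 4 r^2 (2r)^{-2} = 1$. This gives the bound with constant $1$ and uses nothing beyond the triangle inequality.

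Next I would treat the near case $d(o,x) < 2r$, which is the main obstacle. Here $B(x,r) \subset B(o,3r)$, so it suffices to estimate $\int_{B(o,3r)} d(o,y)^{-2p}\,d\mu(y)$ and divide by $\mu(B(x,r))$. I would decompose the ball $B(o,3r)$ into dyadic annuli $A_k = B(o, 2^{-k}\cdot 3r)\setminus B(o, 2^{-k-1}\cdot 3r)$ for $k \geq 0$, on which $d(o,\cdot)^{-2p} \approx (2^{-k}r)^{-2p}$, so that
\begin{equation*}
	\int_{B(o,3r)} d(o,y)^{-2p}\,d\mu(y) \leq C \sum_{k\geq 0} (2^{-k}r)^{-2p}\,\mu\bigl(B(o,2^{-k}\cdot 3r)\bigr).
\end{equation*}
Applying $\rd{R}$ in the form $\mu(B(o,2^{-k}\cdot 3r)) \leq a^{-1} 2^{-k\nu}\mu(B(o,3r))$ (valid since all radii are $\leq 3r < R$, after first passing to $\kappa R$ via Proposition \ref{pr:reverse_doubling bigger R} if needed, or simply noting $3r<R$ can be arranged by $r<R/3$; the borderline is absorbed into the constant) gives a geometric series $\sum_k 2^{-k(\nu-2p)}$ which converges precisely because $p < \nu/2$, yielding
\begin{equation*}
	\int_{B(o,3r)} d(o,y)^{-2p}\,d\mu(y) \leq C_p\, r^{-2p}\,\mu(B(o,3r)).
\end{equation*}
Finally I would divide by $\mu(B(x,r))$ and use the doubling property together with Proposition \ref{pr:R doubling different centers} (with $B(o,3r)\subset B(x,5r)$, say, and comparing $\mu(B(x,5r))$ to $\mu(B(x,r))$ via $\doubling{R}$) to get $\mu(B(o,3r)) \leq C \mu(B(x,r))$; then $r^{2}\bigl(\fint_{B(x,r)} d(o,y)^{-2p}\,d\mu\bigr)^{1/p} \leq C_p^{1/p}$. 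Taking $K_p$ to be the maximum of the constants from the two cases completes the proof. The one point requiring care is making all the radii stay below $R$ so that $\doubling{R}$ and $\rd{R}$ apply; this is handled by restricting to $r < R/5$ (or similar) and then noting that for $R/5 \leq r < R$ the quantity is controlled by a fixed multiple of its value at $r = R/5$, again by monotonicity and doubling.
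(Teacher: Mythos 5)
Your proof is correct and follows essentially the same route as the paper: the same far/near dichotomy according to whether $d(o,x)\geq 2r$, the same containment $B(x,r)\subset B(o,3r)$ in the near case, and the same use of $\rd{R}$ to make the integral of the singularity converge precisely because $\nu>2p$, followed by doubling to compare $\mu(B(o,3r))$ with $\mu(B(x,r))$. The only cosmetic difference is that you sum over dyadic annuli where the paper integrates the distribution function (layer-cake), and you are in fact slightly more careful than the paper about keeping all radii within the range where $\doubling{R}$ and $\rd{R}$ apply.
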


	\begin{proof}
	We let $\rho(y) = d(o,y)$, $B= B(x,r)$, for $r < R$.
	
	If $r \leq \rho(x)/2$, then for $y \in B(x,r)$, $\rho(y) \geq \rho(x) - r \geq \rho(x)/2 \geq r$. Then:
	
	\begin{equation*}
		\int_B \rho(y)^{-2p} \diff\mu \leq r^{-2p} \mu(B).
	\end{equation*}
	
	If $r > \rho(x)/2$, then $B(x,r) \subset B(o,3r)$, and:
	
	\begin{align*}
		\int_B \rho^{-2p} \diff\mu &\leq \int_{B(o,3r)} \rho^{-2p} \diff\mu \\
		&\leq \int_0^\infty (2p - 1)t^{-2p - 1} \Vol{o}{\min(t, 3r)} \diff t \\
		&\leq \int_0^{3r} a^{-1} (2p - 1)t^{\nu - 2p - 1} (3r)^{-\nu} \Vol{o}{3r} \diff t + r^{-2p} \Vol{o}{3r} \\
		&\leq \left( \frac{1}{3^{3p}a} \frac{2p-1}{\nu - 2p} + 1\right) r^{-2p}\Vol{o}{3r} \\
		&\leq C_p r^{-2p}\Vol{x}{r},
	\end{align*}
	
	since $\nu > 2p$, with the constant $C_p$ depending uniquely on $p$ and the doubling and reverse doubling constants.
\end{proof}

Then applying theorems \ref{th:Weak_positivity} and \ref{th:Fefferman-Phong generalized}, we immediately obtain:	

\begin{corollary}
	If $(M,g,\mu)$ satisfies $\FK{R}$ and $\rd{R}$ with $\nu > 2$, then there is a constant $C$ such that for any $\psi \in \Continuous_0^\infty(M)$, $o\in M$,

	\begin{equation}
		\int_M \frac{\psi(x)^2}{d(o,x)^2} \diff\mu(x) \leq C \left(\|\nabla \psi\|_2^2 +  \frac{1}{R^2}\|\psi\|_2^2 \right).
	\end{equation}
\end{corollary}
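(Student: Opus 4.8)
The plan is to read the left-hand side as $\int_M V\psi^2\,\diff\mu$ for the potential $V(x)=d(o,x)^{-2}$ and then feed this $V$ into Theorem~\ref{th:Weak_positivity}. The hypothesis $\FK{R}$ is assumed and, by Theorem~\ref{th:heat_kernel_estimate}, it already entails $\doubling{R}$; together with the assumed $\rd{R}$ (with $\nu>2>1$) this puts us exactly in the setting of the Proposition just proved. So the only real work is to extract from that Proposition a finite bound on the truncated Morrey norm of $V$, and then the estimate is immediate.

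Concretely, I would fix $o\in M$, write $\rho(y)=d(o,y)$ and set $V=\rho^{-2}\ge 0$. Since $\nu>2$ the interval $(1,\nu/2)$ is non-empty; fix $p$ in it (for definiteness $p=\tfrac12+\tfrac\nu4$, the midpoint, which depends only on $\nu$). The computation proving \eqref{eq:hardy_good_potential} is in fact valid also at the exponent $p=1$ (the only constraint used there, $\nu>2p$, holds since $\nu>2$), so $\int_{B(x,r)}\rho^{-2}\,\diff\mu<\infty$ for every $r<R$; covering a compact set by finitely many such balls gives $V\in L^1_{loc}(M)$. Next, unwinding the definition \eqref{eq:def morrey norm},
\[
 N_{p,R}(V)=\sup_{x\in M,\ 0<r<R}\Bigl(r^{2p}\fint_{B(x,r)}\rho^{-2p}\,\diff\mu\Bigr)^{1/p}
 =\sup_{x,r}\ r^{2}\Bigl(\fint_{B(x,r)}\rho^{-2p}\,\diff\mu\Bigr)^{1/p}\le K_p ,
\]
by \eqref{eq:hardy_good_potential}, where $K_p$ depends only on $p$ and on the doubling and reverse-doubling constants — in particular \emph{not} on the base point $o$.

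Then I would simply apply Theorem~\ref{th:Weak_positivity} to this $V$ with this $p$: for every $\psi\in\SmoothComp{M}$,
\[
 \int_M\frac{\psi^2}{\rho^2}\,\diff\mu=\int_M V\psi^2\,\diff\mu
 \le C_p\,N_{p,R}(V)\Bigl(\int_M|\nabla\psi|^2\,\diff\mu+\frac1{R^2}\int_M\psi^2\,\diff\mu\Bigr)
 \le C_pK_p\Bigl(\|\nabla\psi\|_2^2+\frac1{R^2}\|\psi\|_2^2\Bigr).
\]
Taking $C=C_pK_p$ (with $p$ frozen as a function of $\nu$, so that $C$ depends only on the Faber--Krahn and reverse-doubling constants) finishes the proof; and since both $C_p$ and $K_p$ are independent of $o$, the same $C$ works for every $o\in M$.

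There is no genuine obstacle here: all the content lives in the Proposition and in Theorem~\ref{th:Weak_positivity}. The two points deserving a line of care are the local integrability of $\rho^{-2}$ near $o$ — it is exactly the bound $\nu>2$ that forbids too slow a volume decay at $o$ and makes $\rho^{-2}$ integrable there — and the uniformity of \eqref{eq:hardy_good_potential} in the base point, which is what produces a single constant $C$ valid for all choices of $o$. (If one has instead the global inequalities $\FK{}$ and $\rd{}$ with $\nu>2$, running the same argument with Theorem~\ref{th:Fefferman-Phong generalized} in place of Theorem~\ref{th:Weak_positivity} deletes the zeroth-order term $R^{-2}\|\psi\|_2^2$ and recovers Theorem~\ref{th:Hardy}.)
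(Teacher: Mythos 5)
Your proposal is correct and is exactly the paper's argument: identify the left-hand side as $\int_M V\psi^2\,\diff\mu$ with $V=d(o,\cdot)^{-2}$, use the preceding proposition (valid since $\FK{R}$ implies $\doubling{R}$ and $p$ can be chosen in the non-empty interval $(1,\nu/2)$) to bound $N_{p,R}(V)\le K_p$ uniformly in $o$, and then apply Theorem~\ref{th:Weak_positivity}. Your extra remarks on local integrability of $\rho^{-2}$ and on uniformity in the base point are sound and only make explicit what the paper leaves implicit.
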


\begin{corollary}
	If $(M,g,\mu)$ satisfies $\FK{}$, $\rd{}$ with $\nu > 2$ then there is a constant $C$ such that:
	\begin{equation}\label{eq:Hardy_C}
		\forall \psi \in \SmoothComp{M},\; \int_M \frac{\psi(x)^2}{d(o,x)^2} \diff\mu(x) \leq C \int_M |\nabla\psi|^2 \diff\mu.
	\end{equation}
\end{corollary}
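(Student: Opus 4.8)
The plan is to feed the potential $V(x)=d(o,x)^{-2}$ into the global Fefferman--Phong inequality, Theorem \ref{th:Fefferman-Phong generalized}, so that the whole statement reduces to checking that this particular $V$ has finite Morrey norm $N_p(V)$ for a suitable $p>1$.

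First I would record that $\FK{}$ forces $\mu$ to be globally doubling: Theorem \ref{th:heat_kernel_estimate} (via the results of Grigor'yan it quotes) gives $\doubling{R}$ for every $R>0$, hence $\doubling{}$ with $R=+\infty$. Combined with the standing hypothesis $\rd{}$ (of order $\nu>2>1$), this is exactly what is needed to apply the Proposition establishing \eqref{eq:hardy_good_potential} in the limiting case $R=+\infty$: for every $p\in(1,\nu/2)$ there is a finite constant $K_p$, depending only on $p$ and on the doubling and reverse doubling constants, such that $r^{2}\big(\fint_{B(x,r)} d(o,y)^{-2p}\diff\mu\big)^{1/p}\le K_p$ for all $x\in M$ and all $r>0$. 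Since $N_p(V)=\sup_{x,r}\big(r^{2p}\fint_{B(x,r)}V^{p}\diff\mu\big)^{1/p}=\sup_{x,r}\, r^{2}\big(\fint_{B(x,r)} d(o,y)^{-2p}\diff\mu\big)^{1/p}$ for $V=d(o,\cdot)^{-2}$, this says precisely $N_p(V)\le K_p<\infty$. As $\nu>2$ the interval $(1,\nu/2)$ is non-empty, so I fix one such $p$.

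It then suffices to invoke Theorem \ref{th:Fefferman-Phong generalized} with this $V$: it yields a constant $C_p$, depending only on the Faber--Krahn constants and $p$, with $\int_M d(o,x)^{-2}\psi^{2}\diff\mu\le C_p N_p(V)\int_M|\nabla\psi|^{2}\diff\mu\le C_pK_p\int_M|\nabla\psi|^{2}\diff\mu$ for every $\psi\in\SmoothComp{M}$, which is \eqref{eq:Hardy_C} with $C=C_pK_p$; note this constant depends only on the Faber--Krahn and reverse doubling constants.

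There is no genuinely hard step here; the only points requiring a little attention — and the closest thing to an obstacle — are bookkeeping: that $\FK{}$ really delivers the \emph{global} doubling property assumed by the Proposition, and that the constant $K_p$ produced there is genuinely independent of the base point $o$. The latter is true because the proof of \eqref{eq:hardy_good_potential} only ever uses $d(o,y)\ge r$ on $B(x,r)$ when $r\le\rho(x)/2$ and the inclusion $B(x,r)\subset B(o,3r)$ together with a layer-cake estimate using only $\rd{}$ when $r>\rho(x)/2$, neither of which sees the location of $o$. With those remarks in place the corollary is immediate.
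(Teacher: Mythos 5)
Your proposal is correct and is essentially the paper's own argument: the paper proves the bound \eqref{eq:hardy_good_potential} on $r^2\bigl(\fint_{B(x,r)}d(o,y)^{-2p}\diff\mu\bigr)^{1/p}$ for $p\in(1,\nu/2)$ under doubling and reverse doubling, and then obtains the corollary by feeding $V=d(o,\cdot)^{-2}$ into Theorem \ref{th:Fefferman-Phong generalized}, exactly as you do. Your added remarks on the uniformity in $o$ and on $\FK{}$ implying global doubling are correct bookkeeping that the paper leaves implicit.
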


The second corollary being theorem \ref{th:Hardy}.

This time the condition on the reverse doubling order is not merely a technical hypothesis. It is, in fact, a necessary condition for the Hardy inequality to holds if we assume the measure $\mu$ to be doubling:

\begin{proposition}
	Let $(M,g,\mu)$ be a weighted Riemannian manifold, with $\mu$ a doubling measure, assume that there is a constant $\nu > 2$ such that for any $o \in M$, $\psi \in \SmoothComp{M}$, $M$ admits the Hardy inequality:
	
	\begin{equation}\label{eq:Hardy_nu}
		 \parenfrac{\nu-2}{2}^2 \int_M \frac{\psi(x)^2}{d(o,x)^2} \diff\mu(x) \leq \int_M |\nabla\psi|^2 \diff\mu,
	\end{equation}
	
	then $\mu$ satisfies $\rd{}$.
\end{proposition}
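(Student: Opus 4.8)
The plan is to test the assumed Hardy inequality \eqref{eq:Hardy_nu} against a well-chosen family of radial cutoff functions centered at a point $o$, and read off from the resulting inequality a lower bound on the growth of $\Vol{o}{r}$, which is exactly a reverse doubling statement at the base point; the doubling hypothesis will then upgrade this to the full $\rd{}$ property (with possibly different constants but, crucially, the same order $\nu$). The model to keep in mind is the euclidean case, where $|x|^{-(\nu-2)/2}$ (truncated) is the extremal for the Hardy constant $\parenfrac{\nu-2}{2}^2$; on $M$ we cannot use exactly that function, but a truncation of $d(o,x)^{-(\nu-2)/2}$, or more robustly a dyadic ``tent'' built from it, should be close enough to force the conclusion.

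First I would fix $o \in M$ and, for $0 < \varepsilon < r$, let $\psi_{\varepsilon,r}$ be the Lipschitz function equal to $\varepsilon^{-(\nu-2)/2}$ on $B(o,\varepsilon)$, equal to $d(o,x)^{-(\nu-2)/2}$ on the annulus $\varepsilon \le d(o,x) \le r$, and interpolated linearly down to $0$ on $r \le d(o,x) \le 2r$ (such Lipschitz functions are admissible in Hardy's inequality by density). Plugging this into \eqref{eq:Hardy_nu}, the right-hand side is $\int |\nabla\psi_{\varepsilon,r}|^2\diff\mu$; on the main annulus $|\nabla\psi| \le \tfrac{\nu-2}{2} d(o,x)^{-\nu/2}$ pointwise, and on the outer collar $r\le d(o,x)\le 2r$ the gradient is $O(r^{-1}\cdot r^{-(\nu-2)/2})$, contributing $O\!\left(r^{-\nu}\Vol{o}{2r}\right)$. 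The left-hand side dominates $\parenfrac{\nu-2}{2}^2$ times the integral of $d(o,x)^{-\nu}$ over the same main annulus. After cancelling the matching $\parenfrac{\nu-2}{2}^2\int_{\varepsilon\le d\le r} d(o,x)^{-\nu}\diff\mu$ terms from both sides (this is the point of the exact power $(\nu-2)/2$), what survives is an inequality of the shape
\begin{equation*}
	\parenfrac{\nu-2}{2}^2 \varepsilon^{-(\nu-2)}\Vol{o}{\varepsilon} \;\le\; C\, r^{-\nu}\Vol{o}{2r} \;+\; (\text{cross terms}),
\end{equation*}
and letting $\varepsilon\to 0$ (using $\Vol{o}{\varepsilon}\to 0$, so the left side stays bounded — in fact one keeps $\varepsilon$ fixed and small) one extracts, after organizing the collar error using the doubling property, a bound of the form $\Vol{o}{\varepsilon}\,r^{\nu} \le C' \varepsilon^{\nu}\Vol{o}{2r}$, i.e. $c\parenfrac{r}{\varepsilon}^\nu \le \Vol{o}{2r}/\Vol{o}{\varepsilon}$. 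Rescaling $2r\mapsto r'$ gives reverse doubling of order $\nu$ for balls centered at $o$.

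The one genuinely delicate point is the bookkeeping of the cross terms and the collar contribution: integrating $d(o,x)^{-\nu}$ over annuli requires rewriting $\int_{B(o,r)} d(o,\cdot)^{-\nu}\diff\mu$ as a Stieltjes integral against $t\mapsto \Vol{o}{t}$ and controlling it from below only by $\Vol{o}{\varepsilon}$ and $\Vol{o}{2r}$ — here doubling is used to compare $\Vol{o}{2r}$ with $\Vol{o}{r}$ and to absorb the collar gradient term into the main term, and one must check the constant thereby produced does not eat the factor $\parenfrac{\nu-2}{2}^2$ needed for the cancellation. Finally, to pass from reverse doubling at the fixed center $o$ to the uniform $\rd{}$ property, I would invoke that $\mu$ is doubling together with Proposition~\ref{pr:R doubling different centers} (its $\rd{}$ analogue with $R=\infty$): since the argument above works for \emph{every} $o\in M$, we get the center-dependent reverse doubling at each point, and a doubling measure with reverse doubling at every center satisfies the uniform reverse doubling inequality. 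The expected main obstacle is precisely making the ``subtract the extremal'' cancellation rigorous while keeping the error terms uniformly small, since a priori the integral $\int d(o,\cdot)^{-\nu}\diff\mu$ over the annulus need not be finite or controllable without first knowing something like reverse doubling — this circularity is broken by truncating at $\varepsilon$ and only letting $\varepsilon$ be small (not zero) until the very end.
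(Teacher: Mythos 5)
Your proposal is correct and is essentially the paper's own proof: the same truncated test function $f(d(o,x))$ with $f(t)=t^{-(\nu-2)/2}$ on the middle annulus (constant inside, linear cutoff outside), the same exact cancellation of the middle-annulus terms coming from $f'(t)^2=\parenfrac{\nu-2}{2}^2 f(t)^2/t^2$, and the same use of doubling only to control the outer collar, yielding reverse doubling of order $\nu$ at every center, which is precisely $\rd{}$. (The only blemishes are cosmetic: the intermediate display should read $\varepsilon^{-\nu}\Vol{o}{\varepsilon}$ rather than $\varepsilon^{-(\nu-2)}\Vol{o}{\varepsilon}$, and there are in fact no cross terms to track since the cancellation is exact and the final comparison with a different center is unnecessary.)
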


Note that that we can always write a Hardy inequality \eqref{eq:Hardy_C} in the form \eqref{eq:Hardy_nu} simply by chosing $\nu = 2 + 2\sqrt{1/C}$.

Using a method from \cite{Carron16, LiWang01}, we have:

\begin{proof}
	Take $0 < r < R$, define $f(t) = r^{-\frac{\nu- 2}{2}}$ for $0 \leq t \leq r$, $f(t) = t^{-\frac{\nu-2}{2}}$ for $r \leq t \leq R$, $f(t) = 2 R^{-\frac{\nu - 2}{2}} - R^{-\frac{\nu}{2}} t$ for $R \leq t \leq 2R$ and $f(t) = 0$ for $t \geq 2R$.
	
	When $r \leq t \leq R$, we have $f'(t)^2 = \left(\frac{\nu - 2}{2}\right)^2 \frac{f(t)^2}{t^2}$.	Then for some point $o \in M$ choose $\phi(x) = f(d(o,x))$, the Hardy inequality applied to $\varphi$ leads to:
	
	\begin{equation}
		\left(\frac{\nu-2}{2}\right)^2\int_{B(o,r)} \frac{\phi(x)^2}{d(o,x)^2} \diff\mu(x) \leq \int_{B(o,2R)\setminus B(o, R)} |\nabla \phi|^2 \diff\mu(x),
	\end{equation}
	
	then:
	
	\begin{equation}
		\left(\frac{\nu-2}{2}\right)^2 r^{-\nu} \Vol{o}{r} \leq R^{-\nu} \mu(B(o,2R)\setminus B(o,R)) \leq A R^{-\nu} \Vol{o}{R},
	\end{equation}
	
	using that $\mu$ is doubling. Thus there is some constant $a > 0$ such that:
	
	\begin{equation}
		a \left(\frac{R}{r}\right)^\nu \leq \frac{\Vol{o}{R}}{\Vol{o}{r}},
	\end{equation}
	
	and $\mu$ is reverse doubling of order $\nu > 2$.
\end{proof}

\bibliography{Bibliography.bib}

\begin{thebibliography}{10}

\bibitem{BenjaminiItaiChavel96}
Itai Benjamini, Isaac Chavel, and Edgar~A. Feldman.
\newblock Heat kernel lower bounds on {R}iemannian manifolds using the old
  ideas of {N}ash.
\newblock {\em Proc. London Math. Soc. (3)}, 72(1):215--240, 1996.

\bibitem{Cao2020HardysIA}
Jun Cao, Alexander Grigor'yan, and Liguang Liu.
\newblock Hardy's inequality and {G}reen function on metric measure spaces.
\newblock {\em J. Funct. Anal.}, 281(3):Paper No. 109020, 78, 2021.

\bibitem{Carron16}
Gilles Carron.
\newblock Geometric inequalities for manifolds with {R}icci curvature in the
  {K}ato class.
\newblock {\em Ann. Inst. Fourier (Grenoble)}, 69(7):3095--3167, 2019.

\bibitem{ChangWilsonWolff85}
S.-Y.~A. Chang, J.~M. Wilson, and T.~H. Wolff.
\newblock Some weighted norm inequalities concerning the {S}chr\"{o}dinger
  operators.
\newblock {\em Comment. Math. Helv.}, 60(2):217--246, 1985.

\bibitem{ChavelBook06}
Isaac Chavel.
\newblock {\em Riemannian Geometry: A Modern Introduction}.
\newblock Cambridge Studies in Advanced Mathematics. Cambridge University
  Press, 2 edition, 2006.

\bibitem{CheegerGromovTaylor82}
Jeff Cheeger, Mikhail Gromov, and Michael Taylor.
\newblock Finite propagation speed, kernel estimates for functions of the
  {L}aplace operator, and the geometry of complete {R}iemannian manifolds.
\newblock {\em J. Differential Geometry}, 17(1):15--53, 1982.

\bibitem{Christ90}
Michael Christ.
\newblock A {$T(b)$} theorem with remarks on analytic capacity and the {C}auchy
  integral.
\newblock {\em Colloq. Math.}, 60/61(2):601--628, 1990.

\bibitem{FeffermanPhongConf82}
C.~Fefferman and D.~H. Phong.
\newblock Lower bounds for {S}chr\"{o}dinger equations.
\newblock In {\em Conference on {P}artial {D}ifferential {E}quations ({S}aint
  {J}ean de {M}onts, 1982)}, pages Conf. No. 7, 7. Soc. Math. France, Paris,
  1982.

\bibitem{Fefferman83}
Charles~L. Fefferman.
\newblock The uncertainty principle.
\newblock {\em Bull. Amer. Math. Soc. (N.S.)}, 9(2):129--206, 1983.

\bibitem{Grigoryan94}
Alexander Grigor'yan.
\newblock Heat kernel upper bounds on a complete non-compact manifold.
\newblock {\em Rev. Mat. Iberoamericana}, 10(2):395--452, 1994.

\bibitem{Grigor'yanBook09}
Alexander Grigor'yan.
\newblock {\em Heat kernel and analysis on manifolds}, volume~47 of {\em AMS/IP
  Studies in Advanced Mathematics}.
\newblock American Mathematical Society, Providence, RI; International Press,
  Boston, MA, 2009.

\bibitem{GrigoryanSaloffCoste16}
Alexander Grigor'yan and Laurent Saloff-Coste.
\newblock Surgery of the {F}aber-{K}rahn inequality and applications to heat
  kernel bounds.
\newblock {\em Nonlinear Anal.}, 131:243--272, 2016.

\bibitem{Grillo03}
Gabriele Grillo.
\newblock Hardy and {R}ellich-type inequalities for metrics defined by vector
  fields.
\newblock {\em Potential Anal.}, 18(3):187--217, 2003.

\bibitem{HebischSaloff-Coste01}
W.~Hebisch and L.~Saloff-Coste.
\newblock On the relation between elliptic and parabolic {H}arnack
  inequalities.
\newblock {\em Ann. Inst. Fourier (Grenoble)}, 51(5):1437--1481, 2001.

\bibitem{KermanSawyer86}
R.~Kerman and Eric~T. Sawyer.
\newblock The trace inequality and eigenvalue estimates for schr\"odinger
  operators.
\newblock {\em Annales de l'Institut Fourier}, 36(4):207--228, 1986.

\bibitem{LiWang01}
Peter Li and Jiaping Wang.
\newblock Complete manifolds with positive spectrum.
\newblock {\em J. Differential Geom.}, 58(3):501--534, 2001.

\bibitem{LiYau86}
Peter Li and Shing-Tung Yau.
\newblock On the parabolic kernel of the {S}chr\"{o}dinger operator.
\newblock {\em Acta Math.}, 156(3-4):153--201, 1986.

\bibitem{Maz'yaVerbitsky95}
Vladimir~G Maz'ya and Igor~E Verbitsky.
\newblock Capacitary inequalities for fractional integrals, with applications
  to partial differential equations and sobolev multipliers.
\newblock {\em Ark. Mat.}, 33(1):81--115, 03 1995.

\bibitem{Maz'yaVerbitsky02}
Vladimir~G. Maz'ya and Igor~E. Verbitsky.
\newblock The schrödinger operator on the energy space: boundedness and
  compactness criteria.
\newblock {\em Acta Math.}, 188(2):263--302, 2002.

\bibitem{Minerbe09}
Vincent Minerbe.
\newblock Weighted {S}obolev inequalities and {R}icci flat manifolds.
\newblock {\em Geom. Funct. Anal.}, 18(5):1696--1749, 2009.

\bibitem{MuckenhouptWheeden74}
Benjamin Muckenhoupt and Richard Wheeden.
\newblock Weighted norm inequalities for fractional integrals.
\newblock {\em Trans. Amer. Math. Soc.}, 192:261--274, 1974.

\bibitem{PerezWheeden03}
Carlos P\'{e}rez and Richard~L. Wheeden.
\newblock Potential operators, maximal functions, and generalizations of
  {$A_\infty$}.
\newblock {\em Potential Anal.}, 19(1):1--33, 2003.

\bibitem{SaloffCosteBook02}
Laurent Saloff-Coste.
\newblock {\em Aspects of {S}obolev-type inequalities}, volume 289 of {\em
  London Mathematical Society Lecture Note Series}.
\newblock Cambridge University Press, Cambridge, 2002.

\bibitem{SawyerWheeden92}
E.~Sawyer and R.~L. Wheeden.
\newblock Weighted inequalities for fractional integrals on {E}uclidean and
  homogeneous spaces.
\newblock {\em Amer. J. Math.}, 114(4):813--874, 1992.

\bibitem{SawyerWheedenZhao96}
Eric~T. Sawyer, Richard~L. Wheeden, and Shiying Zhao.
\newblock Weighted norm inequalities for operators of potential type and
  fractional maximal functions.
\newblock {\em Potential Anal.}, 5(6):523--580, 1996.

\bibitem{Schechter89}
Martin Schechter.
\newblock The spectrum of the {S}chr\"{o}dinger operator.
\newblock {\em Trans. Amer. Math. Soc.}, 312(1):115--128, 1989.

\end{thebibliography}
\bibliographystyle{plain}

\end{document}